\newtheorem{theorem}{Theorem}[section]
\newtheorem{proposition}[theorem]{Proposition}
\newtheorem{lemma}[theorem]{Lemma}
\newtheorem{exercise}[theorem]{Exercise}
\newtheorem{corollary}[theorem]{Corollary}
\newtheorem{definition}[theorem]{Definition}
\newtheorem{remark}[theorem]{Remark}
\newtheorem*{theorem*}{Theorem}
\theoremstyle{definition}
\newtheorem{example}{Example}[section]
\newenvironment{hint}{\begin{small} \emph{Hint:}}{\end{small}}
\definecolor{Yellow}{rgb}{0.95,0.9,0.0} 
\definecolor{Red}{rgb}{0.8,0.1,0.1}
\definecolor{Green}{rgb}{0.1,0.65,0.2}
\definecolor{Blue}{rgb}{0.1,0.1,0.8}
\definecolor{Purple}{rgb}{0.7,0.1,0.7}
\definecolor{Grey}{rgb}{0.6,0.6,0.6}
\definecolor{YELLOW}{rgb}{0.95,0.9,0.0} 
\definecolor{RED}{rgb}{0.8,0.1,0.1}
\definecolor{GREEN}{rgb}{0.25,0.65,0.1}
\definecolor{BLUE}{rgb}{0.1,0.1,0.8}
\definecolor{PURPLE}{rgb}{0.7,0.1,0.7}
\newcommand{\supp}{\operatorname{supp}} 
\renewcommand{\div}{\operatorname{div}} 
\newcommand{\sign}{\operatorname{sign}}
\newcommand{\dist}{\operatorname{dist}} 
\newcommand{\sdist}{\operatorname{sdist}} 
\DeclareMathOperator*{\esssup}{ess\,sup}
\newcommand{\trace}{\operatorname{tr}} 
\newcommand{\Lip}{\operatorname{Lip}} 
\newcommand{\const}{\operatorname{const.}}
\newcommand{\diag}{\operatorname{diag}}
\newcommand{\R}{\mathbb{R}}
\newcommand{\N}{\mathbb{N}}
\newcommand{\sphere}{\mathbb{S}}
\renewcommand{\H}{\mathcal{H}}
\newcommand{\eps}{\varepsilon}
\newcommand{\p}{\partial}
\newcommand{\dL}{\,\mathrm{d}}
\newcommand{\ddt}{\frac{\mathrm{d}}{\mathrm{d}t}}
\begin{document}

\title[Distributional solutions to mean curvature flow]{Distributional solutions to mean curvature flow}

\author{Tim Laux}
\address{Tim Laux, Hausdorff Center for Mathematics, University of Bonn,  Villa Maria, Endenicher Alllee 62, 53115 Bonn, Germany}
\email{tim.laux@hcm.uni-bonn.de}

\begin{abstract}
	These lecture notes aim to present some of the ideas behind the recent (conditional) existence and (weak-strong) uniqueness theory for mean curvature flow. 
	Focusing on the simplest case of the evolution of a single closed hypersurface allows for a self-contained and concise presentation, which is accessible for beginning graduate students with some background in PDEs and only requires basic measure theory. 
\end{abstract}

\maketitle
\tableofcontents

%
%

\section{Motivation, (numerical) examples \& classical theory}

	Mean curvature flow (MCF) is one of the two most elementary geometric evolution equations and arises in many problems from physics and engineering to biology and chemistry, and of course plays an important role in geometry. 
	MCF is the most natural diffusion equation for surfaces in terms of their extrinsic geometry and is therefore closely related to the Ricci flow, the natural diffusion equation in terms of the intrinsic geometry. 
	Just like any diffusion equation, it instantly regularizes the initial conditions. However, after finite time, topology changes may occur.
	The classical solution concept relies on parametrizing the evolving surfaces over a fixed reference manifold, and is therefore only applicable before the onset of these topology changes.
	Weak solutions on the other hand allow to describe the evolution through these singular events. 
	
	There are several concepts of weak solutions to MCF. These notes focus on the distributional solution introduced by Luckhaus--Sturzenhecker \cite{LucStu95}. 
	This first section gives a somewhat broad and very basic introduction to mean curvature flow. Here, some aspects of the classical theory are recalled, mainly focusing on simple computations in order to get familiar with the equation. The more experienced reader will gladly skip this section.
	
	In Section \ref{sec:existence}, the construction of Almgren--Taylor--Wang \cite{ATW93} and Luck\-haus--Sturzenhecker \cite{LucStu95} is discussed; and a conditional closure theorem is proven to illustrate at the simplest example the structure of the proof in \cite{LucStu95} and other recent proofs inspired by that work \cite{LauxLelmi,LauxOtto, LauxOttoBrakke, LauxSimon}. 
	For an accessible introduction to Brakke's mean curvature flow, the interested reader is referred to the recent monograph of Tonegawa \cite{TonegawaBook}, and for the concept of viscosity solutions, one may in fact refer directly to the original paper by Evans and Spruck \cite{EvansSpruckI}. 
	
	Section \ref{sec:uniqueness} is devoted to uniqueness issues for the distributional solution. 
	Here again, we focus on the simple two-phase case, but again, the method carries over to the multiphase case. The interested reader is referred to the original paper \cite{FHLS2D} for this generalization. We also refer to \cite{JerSme} for the case of the binormal curvature flow of filaments in $\R^3$, which can be viewed as the Schr\"odinger analog of our case here. 
	
 	For the purpose of these notes, a \emph{classical mean curvature flow} is a one-parameter family of hypersurfaces $(\Sigma(t))_{t\in[0,T)}$ satisfying 
 	\begin{align}\label{eq:intro VH}
	 	V = -\mu \sigma H \quad \text{on }\Sigma(t).
 	\end{align}
	Here $V$ denotes the normal velocity, $H$ denotes the mean curvature, and $\mu,\sigma>0$ are  fixed parameters, called mobility and surface tension, respectively. 
 	We will always use the traditional sign convention that $V>0$ for expanding enclosed volumes and $H>0$ for convex enclosed volumes.
	
	\subsection{Physical motivation and gradient-flow structure}
	Before getting into gory mathematical details, let's try to motivate MCF as a phenomenological model for evolution of phase boundaries or other interfaces in physical models.
	Let's focus on grain growth, the slow relaxation of grain boundaries in polycrystals. At sufficiently high temperature, the grain boundary network slowly moves in order to relax the overall energy. 
	A crucial first observation is that a grain boundary is an imaginary surface, and not a thin region of evolving particles (as for example a soap bubble). 
	The grain boundary simply separates two regions, so-called ``grains'', of the same material and each of a different constant crystal orientation.
	This idealized boundary moves because atoms close to this imaginary surface, which lie on one of the lattices decide to orient along the other lattice because they feel their neighbors on the other side of the grain boundary. 
	The upshot is that no material is transported on the scale at which the grain boundary moves.
	In other words, we are not interested in the trajectories of points on the grain boundary but of the change of location and shape of the whole grain boundary.
	In the language of fluid dynamics, the former situation would correspond to the Lagrangian viewpoint, the latter to the Eulerian viewpoint.
	In these notes, we will mostly adopt the latter viewpoint. 
	
	The energy which is carried by a grain boundary depends on the mismatch of the two crystal lattices and in general also on the orientation of the grain boundary with respect to these two lattices. 
	In these notes, we will focus on the simplest geometric situation of one grain which is embedded in a much larger grain. 
	Then we only deal with two crystal lattice, and for simplicity, we assume that the energy density does not depend on the orientation of the grain boundary, which can be justified if the mismatch angle between the two crystal lattices is small \cite{read1950dislocation}.
	In this case, the total energy is simply given by the area functional
	\begin{align}
		E(\p \Omega) = \sigma \mathcal{H}^{d-1}(\p \Omega),
	\end{align}
	where $\Omega \subset \R^d $ is the smaller grain and $d\geq 2$ denotes the dimension of the ambient space and $\mathcal{H}^{d-1}$ denotes the $(d-1)$-dimensional Hausdorff measure. 
	The rate of change of this surface area can be computed as\footnote{This and other computations will be justified later on in more detail.} 
	\begin{align*}
		\ddt E(\p \Omega(t)) = \sigma \int_{\p \Omega(t)} VH \dL \mathcal{H}^{d-1},
	\end{align*}
	where $V$ denotes the normal velocity of the grain boundary and $H$ denotes its mean curvature (with the standard sign conventions $V>0$ for expanding $\Omega$ and $H>0$ for convex $\Omega$). 
	If $\p \Omega(t)$ now moves by MCF  \eqref{eq:intro VH}, then 
	\begin{align*}
			\ddt E(\p \Omega(t)) = - \frac1\mu \int_{\p \Omega(t)} V^2 \dL \mathcal{H}^{d-1} \leq 0,
	\end{align*}
	from which we can read off that MCF is the gradient flow of the energy $E$ with respect to the $L^2$-norm on $\p \Omega$, weighted by the inverse mobility $\frac1\mu$.
	
	Just as our grain boundary was not made up of particles moving with the surface, also this equation does not see individual particles: it is geometric in the sense that it is invariant under reparametrizations of the interface $\p \Omega(t)$ since the normal velocity and the mean curvature do not depend on the choice of parametrization.
	Although interesting for applications---and in particular for more difficult systems---, the two parameters $\mu$ and $\sigma$ can be scaled out in our present context, so that we will from now on assume $\mu=\sigma=1$. 
	
	\subsection{Planar curves}
	\label{sec:planar curves}
	
	To get acquainted with MCF, let us consider the simplest case of planar curves, in which case MCF is called the \emph{curve shortening flow}. 
	Throughout this section we will neglect regularity issues and assume that all maps and functions will be sufficiently smooth to carry out all manipulations.
	We seek to find a one-parameter family of embedded parametrized curves $(X(\,\cdot\,,t))_{t\in [0,T)}$, i.e., $X\colon \sphere^1\times[0,T)\to \R^2$ such that $\p_u X(u,t) \neq 0 $ for all $u\in \sphere^1$ and all $t\in [0,T)$, which satisfies the PDE
	\begin{align}\label{eq:PDE deg CSF}
		\nu \cdot \p_t X =- \kappa
	\end{align}
	and attains the initial datum
	\begin{align}
		X(\,\cdot\,,0) = X_0.
	\end{align}
	Here $X_0$ is some given parametrized curve $X_0 \colon \sphere^1 \to \R^2$, $\kappa$ denotes the curvature of the curve $X(\,\cdot\,,t)$ and $\nu$ is the unit normal defined below.
	
	In the following exercise, we'll show that \eqref{eq:PDE deg CSF} is ``geometric'' in the sense that it does not depend on the particular parametrization of the curves $X(\,\cdot\,,t)$.
	\begin{exercise}\label{exercise:reparametrization}
		Let $\varphi\colon \sphere^1\times (0,T) \to \sphere^1$ be a one-parameter family of orien\-ta\-tion-preserving diffeomorphisms, i.e., $\varphi$ is smooth and $\p_v \varphi(v,t) >0$ for all $v\in \sphere^1$ and $t\in (0,T)$.
		Show that if $X$ solves the PDE \eqref{eq:PDE deg CSF}, then so does $Y=X\circ \varphi$.		
	\end{exercise}
	
	Therefore, we can always reparametrize to get a \emph{normal parame\-triza\-tion} such that $\partial_t X \cdot \tau =0$; then \eqref{eq:PDE deg CSF} reads 
	\begin{align}\label{eq:PDE nondeg CSF}
		\p_t X = - \kappa \nu.
	\end{align}

	To compute the curvature recall that we can parametrize any curve by arc-length. Although the arc-length parameter of a closed curve is only unique up to an additive constant, the length element and derivative w.r.t.\ arc length are uniquely determined by the relations
	\begin{align*}
	\dL s = |\p_u X| \dL u\quad \text{and} \quad \p_s = \frac1{|\p_u X|} \p_u.
	\end{align*}
	Then the tangent $\tau$, normal $\nu$, and curvature $\kappa$ are given by
	\begin{align}\label{eq:curves def normal etc}
		\tau = \p_s X, \quad \nu=J\tau,\quad \kappa = -\p_{s}^2 X \cdot \nu =- \p_s \tau \cdot \nu,
	\end{align}
	where $J = \begin{pmatrix} 0 & -1\\ 1& 0 \end{pmatrix}$ is the counter-clockwise rotation by $90^\circ$.\footnote{Note that for a positively oriented simple closed curve, \eqref{eq:PDE deg CSF} and \eqref{eq:curves def normal etc} follow the same sign convention as described in \eqref{eq:intro VH}.} 
	Therefore the curve shortening flow equation \eqref{eq:PDE nondeg CSF} simply reads 
	\begin{align}\label{eq:curve shortening}
		\partial_t X = \p_{s}^2 X.
	\end{align}
	Formally, this looks like the heat equation, but of course the differential operator $\p_{s}^2$ on the right-hand side depends on the parametrization $X$ itself, which makes the equation nonlinear.
	
	\begin{exercise}\label{exercise:commutator}
		If $X$ solves \eqref{eq:curve shortening}, then the length element satisfies
		\begin{align}\label{eq:dt length element}
		\p_t \dL s = - \kappa^2 \dL s
		\end{align}
		and we have the commutator relation
		\begin{align}\label{eq:commutator dt ds}
		[\p_t,\p_s] := \p_t\p_s-\p_s\p_t = \kappa^2 \p_s.
		\end{align}
	\end{exercise}
	
%
%

		%

		\begin{proof}[Solution to Exercise \ref{exercise:commutator}]
			To prove \eqref{eq:dt length element}, we compute
			\begin{align*}
				\p_t |\p_u X| 
				= \frac{\p_u X\cdot \p_t\p_uX}{|\p_uX|} 
				= \frac{\p_u X}{|\p_u X|} \cdot \p_u \p_t X 
				&\stackrel{\eqref{eq:curve shortening}}{=}-  \tau \cdot \p_u (\kappa \nu)
				\\&=- \kappa \tau \cdot \p_u \nu 
				= \kappa^2 |\p_u X|.
			\end{align*}
			
			Now let us turn to \eqref{eq:commutator dt ds}. 
			For $f \in C^1([0,T)\times \sphere^1)$ it holds 
			\begin{align}
				[\p_t,\p_s] f 
				 = \p_t \left( \frac1{|\p_u X|} \p_u f\right) -  \frac1{|\p_u X|} \p_u \p_t f
				 = \p_u f \p_t \frac1{|\p_u X|}.
			\end{align}
			By \eqref{eq:dt length element} and the chain rule,
			\begin{align}
				\p_t \frac1{|\p_u X|} 
				= -\frac1{|\p_u X|^2} \p_t |\p_u X|
				\stackrel{\eqref{eq:dt length element}}{=} -\frac1{|\p_u X|^2} (-\kappa^2 |\p_u X|)
				=  \kappa^2\frac1{|\p_u X|}.
			\end{align}
			Hence
			\begin{align}
				[\p_t,\p_s] f = \kappa^2 \p_s f \quad \text{for all } f\in C^1([0,T)\times \sphere^1),
			\end{align}
			which is \eqref{eq:commutator dt ds}.
		\end{proof}
		
		An important consequence of this computation is that the total length of the curve is decreasing along the flow:
		
		\begin{corollary}[Evolution of length]
			\label{cor:ev length}
			The total length of a curve, given by
			\begin{align*}
				L(X) &:= \int_X 1 \dL s = \int_{\sphere^1} |\p_u X| \dL u,
			\end{align*}
			under the evolution \eqref{eq:curve shortening} satisfies
			\begin{align}
				\label{eq:dt Length}
				\ddt L(X(\, \cdot \, , t)) &= - \int_X \kappa^2 \dL s.
			\end{align}	
		\end{corollary}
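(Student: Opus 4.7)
The plan is to pull the time derivative through the length functional using the parametrized form $L(X) = \int_{\mathbb{S}^1} |\partial_u X| \, \mathrm{d}u$, and then invoke \eqref{eq:dt length element} from the preceding exercise. Since $u\in \mathbb{S}^1$ is a fixed parameter independent of $t$, differentiating under the integral is legitimate under the standing smoothness assumption, and the remaining pointwise computation has effectively already been carried out.

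Concretely, I would first write
\begin{align*}
    \ddt L(X(\,\cdot\,,t)) = \ddt \int_{\mathbb{S}^1} |\p_u X(u,t)| \dL u = \int_{\mathbb{S}^1} \p_t |\p_u X(u,t)| \dL u,
\end{align*}
using dominated convergence to justify the interchange (the integrand is smooth on the compact domain $\mathbb{S}^1$). Next I would recall that Exercise~\ref{exercise:commutator} just established $\p_t \dL s = -\kappa^2 \dL s$, which in terms of the parametrization means $\p_t |\p_u X| = -\kappa^2 |\p_u X|$. Substituting this yields
\begin{align*}
    \ddt L(X(\,\cdot\,,t)) = -\int_{\mathbb{S}^1} \kappa^2 |\p_u X| \dL u = -\int_X \kappa^2 \dL s,
\end{align*}
where the last equality is the defining relation $\dL s = |\p_u X| \dL u$. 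This is exactly \eqref{eq:dt Length}.

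There is essentially no obstacle in this argument: the identity is a direct corollary of \eqref{eq:dt length element}, once one is comfortable swapping differentiation and integration over the compact fixed domain $\mathbb{S}^1$. The only subtlety worth a remark is that the identity depends on the PDE \eqref{eq:curve shortening}, not merely on the geometric formulation \eqref{eq:PDE deg CSF}; however, since any solution of the latter can be reparametrized to solve the former (Exercise~\ref{exercise:reparametrization}), and both sides of \eqref{eq:dt Length} are reparametrization-invariant, the statement holds for any classical curve shortening flow regardless of the chosen parametrization.
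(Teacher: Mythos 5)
Your argument is correct and is essentially the paper's own proof: differentiate under the integral and substitute the pointwise identity $\p_t \dL s = -\kappa^2 \dL s$ from \eqref{eq:dt length element}, with the parametrized form $\int_{\sphere^1}|\p_u X|\dL u$ merely making the interchange of $\ddt$ and the integral explicit. The closing remark about reparametrization invariance is a fine extra observation but not needed beyond what the paper does.
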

		
		\begin{proof}
			By \eqref{eq:dt length element}, we have
			\begin{align*}
			\ddt L(X(\, \cdot \, , t)) 
				= \int_X \p_t \dL s
				= -\int_X \kappa^2\dL s
			\end{align*}
			as desired.
		\end{proof}
		
		Also local quantities can now be monitored easily.
		\begin{lemma}[Evolution of geometric quantities]\label{lemma:curves dt tau dt nu dt kappa}
			The tangent and normal vectors satisfy
			\begin{align}
				\label{eq:dt tau} \p_t \tau & = (\p_s \kappa) \nu,
				\\ \label{eq:dt nu} \p_t \nu &= - (\p_s \kappa) \tau.
			\end{align}
			The curvature satisfies the evolution equation
			\begin{align}
				\label{eq:dt kappa} \p_t \kappa - \p_s^2\kappa = \kappa^3.
			\end{align}
		\end{lemma}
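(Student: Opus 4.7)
The plan is to derive everything from three tools: the curve shortening equation $\p_t X = -\kappa\nu$ (which is what \eqref{eq:curve shortening} reads in the normal parametrization, using $\p_s^2 X = \p_s\tau$ together with $|\tau|=1$ and the definition of $\kappa$), the commutator identity $[\p_t,\p_s] = \kappa^2 \p_s$ from Exercise~\ref{exercise:commutator}, and the Frenet-type relations
\begin{align*}
    \p_s \tau = -\kappa \nu, \qquad \p_s \nu = \kappa \tau,
\end{align*}
which follow from $|\tau|=|\nu|=1$, $\tau\cdot\nu=0$, and $\kappa=-\p_s\tau\cdot\nu$ (for $\p_s\nu$, use $\nu=J\tau$ and $J^2=-\Id$, or equivalently differentiate $\tau\cdot\nu=0$).

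For \eqref{eq:dt tau} I would differentiate $\tau=\p_s X$ in time and swap $\p_t$ with $\p_s$ via the commutator:
\begin{align*}
    \p_t\tau = \p_t\p_s X = \p_s\p_t X + \kappa^2 \p_s X = -\p_s(\kappa\nu) + \kappa^2\tau.
\end{align*}
Expanding $\p_s(\kappa\nu)$ with $\p_s\nu=\kappa\tau$, the $\kappa^2\tau$ terms cancel, leaving only the normal component, which yields the claimed identity (up to the sign convention). Then \eqref{eq:dt nu} comes almost for free by applying $\p_t$ to $\nu=J\tau$: since $J$ is constant in $t$, $\p_t\nu = J\p_t\tau$, and $J\nu = J^2\tau = -\tau$ converts the normal vector in \eqref{eq:dt tau} into $-\tau$.

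For \eqref{eq:dt kappa} the strategy is to start from $\kappa = -\p_s\tau\cdot\nu$ and compute
\begin{align*}
    \p_t\kappa = -(\p_t\p_s\tau)\cdot\nu - \p_s\tau\cdot\p_t\nu.
\end{align*}
The second term vanishes because $\p_s\tau\parallel\nu$ while $\p_t\nu\parallel\tau$ by \eqref{eq:dt nu}. For the first, apply the commutator once more,
\begin{align*}
    \p_t\p_s\tau = \p_s\p_t\tau + \kappa^2\p_s\tau,
\end{align*}
substitute \eqref{eq:dt tau}, and expand $\p_s((\p_s\kappa)\nu)$ using $\p_s\nu=\kappa\tau$. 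The tangential piece drops out upon pairing with $\nu$, the $\p_s^2\kappa$ piece produces the diffusion, and the $\kappa^2\p_s\tau\cdot\nu = -\kappa^3$ contribution (with its sign flip from $\kappa=-\p_s\tau\cdot\nu$) yields the reaction term $+\kappa^3$.

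The only real obstacle is bookkeeping: one has to decompose $\p_s\p_t\tau$ correctly into its tangential and normal parts, and check that all tangential contributions cancel against $\nu$ so that the remaining identity is exactly $\p_t\kappa - \p_s^2\kappa = \kappa^3$. Apart from this careful accounting, each step is a direct application of the three tools above.
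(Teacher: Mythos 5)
Your proposal is correct and follows essentially the same route as the paper's proof: obtain \eqref{eq:dt tau} by differentiating $\tau=\p_s X$ and using the commutator \eqref{eq:commutator dt ds}, get \eqref{eq:dt nu} by applying $J$, and derive \eqref{eq:dt kappa} by differentiating $\kappa=-\p_s\tau\cdot\nu$, discarding the term $\p_s\tau\cdot\p_t\nu$ by orthogonality, and using the commutator once more. The sign you flag ``up to the sign convention'' in $\p_t\tau$ is the same orientation ambiguity present in the paper's own computation, and as you note it cancels so that the curvature equation $\p_t\kappa-\p_s^2\kappa=\kappa^3$ comes out exactly right.
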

		
		\begin{remark}\label{remark:reaction diffusion kappa}
			The simple structure of the evolution equation for the curvature is remarkable: it's a reaction-diffusion equation! 
			
			\begin{enumerate}
				\item First, note that the sign of the reaction-term $\kappa^3$ is favorable for a maximum-principle argument to show
				\begin{align}\label{MP for kappa}
					\kappa(\,\cdot\,,0) \geq 0 \Rightarrow \kappa(\,\cdot\,,t) \geq 0 \quad \text{for }t>0.
				\end{align}
				In other words, convex curves remain convex under CSF.
							
				\item However, in terms of regularity, the reaction term $\kappa^3$ has the ``bad'' sign in the sense that it provokes blow-up in finite time. 
				Such an ``ODE-driven'' blow-up has the rate 
				\begin{align}
					\kappa(t)\sim \frac1{\sqrt{2(T-t)}} \quad \text{as }t\uparrow T.
				\end{align}
			\end{enumerate}
		\end{remark}
	
		\begin{proof}[Proof of Lemma \ref{lemma:curves dt tau dt nu dt kappa}]
			Note first that \eqref{eq:dt nu} follows from applying the rotation $J$ to \eqref{eq:dt tau}. To prove the latter, we use \eqref{eq:curve shortening} and \eqref{eq:commutator dt ds} to get
			\begin{align*}
				\p_t \tau 
				= \p_t \p_s X 
				= \p_s \p_t X + [\p_t,\p_s] X
				\stackrel{\eqref{eq:curve shortening},\eqref{eq:commutator dt ds} }{=}- \p_s( \kappa \nu) + \kappa^2 \p_sX 
				= (-\p_s\kappa) \nu,
			\end{align*}
			where we have used $\p_s \nu =J\p_s \tau = -J \kappa \nu = \kappa \tau$ in the last step. 
			(Of course the $\tau$-component had to cancel since $|\tau|^2 =1$ implies that $\tau \cdot \p_t \tau =0$!)
			
			Now, let's turn to \eqref{eq:dt kappa}. By the definition of $\kappa$ and since $\p_t \nu $ and $ \p_s\tau $ are orthogonal, we have
			\begin{align}
				-\p_t \kappa 
				= \p_t  \left( \nu \cdot \p_s \tau \right)
				= \nu \cdot \p_t\p_s \tau
				=\nu \cdot \p_s \p_t \tau + \nu \cdot [\p_t,\p_s]\tau.
			\end{align}
			Now plugging in the evolution equation \eqref{eq:dt tau} for $\tau$ and the commutator identity \eqref{eq:commutator dt ds}, we obtain
			\begin{align}
				-\p_t \kappa
				= \nu \cdot \p_s ((\p_s\kappa) \nu)+ \kappa^2 \nu \cdot \p_s \tau
				= \p_s^2 \kappa +\kappa^3,
			\end{align}
			which is precisely \eqref{eq:dt kappa}.
		\end{proof}
		
		\begin{exercise}[Basic identities for arbitrary geometric evolution equations]
			\label{ex:basic identities arbitrary}
				Let $X$ solve a general evolution equation of the form
			\begin{align}\label{eq:curve general evolution}
			\partial_t X  = V \nu
			\end{align}
			for some function $V$. 
			Show that
			\begin{align}
				\p_t \dL s &=  \kappa V \dL s,
			\\	[\p_t,\p_s] & = -\kappa V \p_s,
			\\  \p_t \tau & = -(\p_s V) \nu,
			\\ \p_t \nu &= (\p_s V) \tau,
			\\ 	\p_t \kappa + \p_s^2 V &= -\kappa^2 V.
			\end{align}
		\end{exercise}
	
		\begin{example}
			Other interesting geometric evolution equations are the Willmore flow and surface diffusion. The first appears in the relaxation of elastic energy of thin plates, and the latter when particles diffuse along a surface.
			In our two-dimensional set-up, these would correspond to the choices $V=\p_s^2\kappa +\frac12 \kappa^3$ and $V=\p_s^2 \kappa$, respectively. 
			It is instructive to post-process the results of Exercise \ref{ex:basic identities arbitrary} in this framework. 
			Most interestingly, for surface (or rather curve) diffusion, we obtain 
			\begin{align*}
				\p_t \dL s=\kappa\p_s^2 \kappa \dL s = \left( \p_s(\kappa \p_s \kappa) - (\p_s\kappa)^2 \right) \dL s,
			\end{align*}
			which---as in the proof of Corollary \ref{cor:ev length}---yields 
			\begin{align*}
				\ddt L(X(\,\cdot\,,t)) = -\int_{X(\,\cdot\,,t)} (\p_s\kappa)^2  \dL s\leq 0.
			\end{align*}
			In fact, surface (resp.~curve) diffusion is the $H^{-1}$-gradient flow of the area (resp.~length) functional.
			Finally, observe that for $V=\p_s^2 \kappa$, we have
			\begin{align*}
				\p_t\kappa + \p_s^4 \kappa =-\kappa^2 \p_s^2 \kappa,
			\end{align*}
			which is a parabolic fourth-order equation, but interestingly, the second-order term on the right-hand side has the ``wrong'' sign in terms of regularity.
		\end{example}
	
		The area enclosed by a (positively oriented) simple closed curve is given by
		\begin{align*}
		A(X) &:= \frac12\int_X X \wedge \tau \,\dL s,
		\end{align*}
		where for vectors $X$, $Y$ in $\R^2$, $X\wedge Y  :=X^1Y^2-X^2Y^1= (JX) \cdot Y$.
		The next exercise shows that this enclosed area decreases at a fixed rate, independent of the geometry of the curve.
		 \begin{exercise}[Evolution of enclosed area]\label{exercise:enclosed area}
		 Suppose  $X_0$ is a posivively oriented simple closed curve and let $X(\,\cdot\,,t)$ satisfy \eqref{eq:curve shortening}.
		 Then
			\begin{align} \label{eq:dt Area}
				\ddt A(X(\, \cdot \, , t)) = -2\pi.
			\end{align}
			\begin{hint}
				You can use the one-dimensional version of Gauss-Bonnet $\int_X \kappa \dL s = 2\pi$. 
			\end{hint}
		\end{exercise}
	
		\begin{remark}
			The previous example strongly relies on the fact that we are in dimension $d=2$, since for curves mean curvature and Gau{\ss} curvature are identical. 
			Instead, for MCF in dimension $d=3$, the rate of change of enclosed volume equals $-\int_{\p \Omega}H \dL S$, which has units of length. 
			MacPherson and Srolovitz \cite{MacPhersonSrolovitz} identified this term as $-2\pi$ times the \emph{mean width} of $\Omega$, and showed that this is one of the terms in the more general case of multiphase MCF, which generalizes the famous von Neumann relation
			\begin{align*}
				\ddt A(\Omega(t)) = -2\pi \left(1-\frac n6\right),
			\end{align*}
			which holds for any phase (or ``grain'') $\Omega(t)$ in multiphase mean curvature flow of planar curves. Here $n \in \N$ denotes the number of triple junctions on $\p \Omega$. Our case of Example \ref{exercise:enclosed area} corresponds to the special case $n=0$.
		\end{remark}
		
		Let's finish our discussion of planar curves with a handful of deeper theorems and an illustration. 
		First recall that convexity is preserved thanks to the reaction-diffusion PDE \eqref{eq:dt kappa} for the curvature as we mentioned in Remark \ref{remark:reaction diffusion kappa}. 
		Next, by Exercise \ref{exercise:enclosed area}, we know precisely when the curve will disappear. The following famous result tells us in addition that the curve becomes rounder and rounder the closer we get to the time of disappearance.
		
		\begin{theorem}[Gage--Hamilton \cite{GageHamilton}]\label{theorem:GageHamilton}
			Let $X_0$ be an embedded, closed convex curve.
			Let $(X(\,\cdot\,,t))_{[t\in[0,T)}$ denote the maximal solution to the CSF, where $T=\frac{A(X_0)}{2\pi}$ and $A(X_0)$ denotes the area enclosed by $X_0$. 
			Then the following three statements hold.
			
			\begin{enumerate}[(i)]
				\item  \emph{Convexity.} \label{item:Gage-Hamilton convex}
				The unique CSF $X\colon \sphere^1 \times [0,T) \to \R^2$ starting from $X_0$ is analytic and strictly convex for all $t\in(0,T)$.
				
				\item \emph{Shrinking to a point.} \label{item:Gage-Hamilton point}
				As $t\uparrow T$, $X(\,\cdot\,,t)$ shrinks to a point $p\in \R^2$.
				
				\item \emph{Roundness.} \label{item:Gage-Hamilton roundness}
				The rescaled curves
				\begin{align}
					Y(\, \cdot \, ,t) := \left(\frac{\pi}{A(X(\,\cdot\,,t))}\right)^\frac12 \left( X(\,\cdot\,,t)- p\right)
				\end{align}
				with constant enclosed area $ A(Y(t)) = A(X_0)$ converge smoothly and exponentially fast to the unit circle.
			\end{enumerate}
		\end{theorem}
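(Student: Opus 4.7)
My plan is to bootstrap from the reaction-diffusion equation \eqref{eq:dt kappa} and the identities of Corollary \ref{cor:ev length} and Exercise \ref{exercise:enclosed area}, treating the three parts in order. For (i), as already anticipated in Remark \ref{remark:reaction diffusion kappa}, I would apply the parabolic maximum principle to \eqref{eq:dt kappa}: comparing $\kappa_{\min}(t)$ with the ODE $\dot\varphi = \varphi^3$ preserves $\kappa \geq 0$, and the strong maximum principle upgrades this to $\kappa > 0$ for $t>0$, i.e., strict convexity. Analyticity follows from standard parabolic regularity for the quasilinear heat equation \eqref{eq:curve shortening} (after fixing a normal parametrization) combined with classical analytic-regularity theorems for quasilinear parabolic systems with analytic coefficients.

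For (ii), \eqref{eq:dt Area} gives $A(t) = A(X_0) - 2\pi t$, hence $T \leq A(X_0)/(2\pi)$. The reverse inequality and the fact that the limit is a single point both rely on Gage's isoperimetric inequality for convex curves, $\int \kappa^2 \dL s \geq \pi L/A$. Combined with \eqref{eq:dt Length}, this forces $L^2/A$ to be non-increasing, yielding a uniform eccentricity bound along the flow. A pinching estimate on $\kappa_{\max}/\kappa_{\min}$ (Hamilton's Harnack-type bound for convex curves) then prevents $\kappa$ from blowing up strictly before $A$ reaches $0$, so $T = A(X_0)/(2\pi)$, and the eccentricity bound makes the shrinking convex bodies Hausdorff-converge to a unique point $p \in \R^2$.

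For (iii), I would introduce the rescaled time $\tau$ by $\dL\tau/\dL t = \pi/A(t)$, under which $Y$ satisfies a modified curve-shortening flow with an added confinement term; its only stationary solution with the prescribed enclosed area $A(X_0)$ is the unit circle (by direct analysis of the stationary ODE for the rescaled curvature). The pinching and gradient estimates from the previous step give uniform $C^k$ bounds on $Y(\,\cdot\,,\tau)$, so smooth subsequential convergence identifies the unit circle as the only possible limit. Exponential convergence then follows from linearization of the rescaled flow at the circle: the linearized operator on $\sphere^1$ is essentially $-\p_\theta^2 - 1$, whose zero modes correspond to translations and dilations and are eliminated by centering at $p$ and the area normalization, leaving a strictly positive spectral gap.

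The hardest step is clearly (iii): obtaining the curvature pinching along the rescaled flow that rules out degenerate subsequential limits, and combining it with monotone entropy functionals (of the type $\int \tilde\kappa\log\tilde\kappa \dL s$) to identify the unique smooth limit as the round circle rather than, say, a collapsed ellipse. Once this qualitative convergence is established, the exponential rate reduces to a routine spectral-gap computation for the Jacobi operator on $\sphere^1$.
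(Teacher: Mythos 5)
The paper does not actually prove this theorem---it is quoted from Gage--Hamilton with a citation---so there is no in-text argument to compare against; what can be assessed is whether your outline would genuinely deliver the result. It follows the broad strategy of the original proof (maximum principle for \eqref{eq:dt kappa} as anticipated in Remark \ref{remark:reaction diffusion kappa}, $A(t)=A(X_0)-2\pi t$ from \eqref{eq:dt Area}, Gage's inequality $\int_X \kappa^2 \dL s \geq \pi L/A$ combined with \eqref{eq:dt Length} to get monotonicity of $L^2/A$, then a normalized flow and convergence to the circle), but as written it is a roadmap rather than a proof: every genuinely hard step is named and deferred rather than carried out.

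Two deferrals are substantive gaps. First, in (ii) you invoke ``a pinching estimate on $\kappa_{\max}/\kappa_{\min}$ (Hamilton's Harnack-type bound for convex curves)'' to rule out curvature blow-up before the area vanishes; no such estimate is available off the shelf at that stage of the argument. In the original proof the curvature bound is obtained through the monotone entropy $\int \kappa \log \kappa \dL s$, a bound on the median curvature, and a geometric limit argument, and establishing exactly this bound is the heart of the theorem (the differential Harnack inequality for curve shortening flow is a later, independently substantial result, and the pinching $\kappa_{\max}/\kappa_{\min}\to 1$ is a conclusion of the convergence analysis, not an input to it). Second, in (iii) smooth exponential convergence does not reduce to a formal linearization at the circle: one must first prove $C^0$ convergence of the normalized curvature to $1$ together with uniform bounds on all its derivatives (an infinite bootstrap of integral estimates in Gage--Hamilton), and only then can a spectral-gap argument be applied rigorously; note also that for the linearized operator $\partial_\theta^2+1$ the dilation mode is unstable (eigenvalue $1$), removed by the area normalization, while the translation modes are the actual zero modes fixed by centering at $p$, so the mode-counting in your sketch needs correcting. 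In short, the plan is consistent with the literature, but the key estimates it relies on are neither supplied nor correctly attributed, so it does not yet constitute a proof.
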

	
		Finally, we state Grayson's theorem \cite{grayson}, which guarantees that any embedded closed curve will  become convex before it disappears. Then in turn, Theorem \ref{theorem:GageHamilton} applies, so that the combination of these two theorems fully characterizes the evolution of embedded closed curves. The proof of this theorem has been simplified in several papers, most notably by Andrews and Bryan \cite{AndrewsBryan}, who refine Huisken's distance comparison argument \cite{huiskendistance}.
		
		\begin{theorem}[Grayson \cite{grayson}]
			Let $X_0$ be an embedded closed curve and let $(X(\,\cdot\,,t))_{[t\in[0,T)}$ denote the maximal solution to the CSF. Then there exists $T^*\in(0,T)$ such that 
			$X(\,\cdot\,,T^*)$ is convex.
		\end{theorem}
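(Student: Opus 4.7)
The plan is to follow the Andrews--Bryan refinement of Huisken's chord-arc comparison, which reduces Grayson's theorem to an application of the Gage--Hamilton theorem (Theorem \ref{theorem:GageHamilton}). The strategy is to show that a carefully chosen ratio of extrinsic chord length to intrinsic arc length remains uniformly bounded below along the flow; this rules out every singular behavior other than shrinking to a round point and, in particular, forces convexity at some time $T^*<T$.

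First, for $p,q\in\sphere^1$, I would introduce the extrinsic distance $d(p,q,t):=|X(p,t)-X(q,t)|$ and the intrinsic arc-length distance $\ell(p,q,t)$ between $X(p,t)$ and $X(q,t)$ along the curve, together with the comparison quantity
\[
Z(p,q,t) := d(p,q,t) - \frac{L(t)}{\pi}\,\psi\!\left(\frac{\pi \ell(p,q,t)}{L(t)}, t\right),
\]
where $\psi$ is chosen so that $Z\equiv 0$ corresponds to an explicit ancient embedded solution (the shrinking paperclip). Using the identities from Exercise \ref{ex:basic identities arbitrary} together with \eqref{eq:dt length element} and \eqref{eq:commutator dt ds}, I would compute $\p_t Z$ and the first two derivatives of $Z$ in $(p,q)$. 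The crucial geometric observation is that at an interior spatial minimum of $Z$ the chord $X(p,t)-X(q,t)$ is orthogonal to the tangents at both endpoints; this orthogonality, combined with the elementary inequality $d\le \ell$, provides exactly the algebraic structure needed to turn $\p_t Z$ into a degenerate parabolic inequality at the minimum.

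The second step is the maximum-principle argument. After choosing $\psi$ so that the model curve is itself an exact solution of CSF, the evolution of $Z$ at an interior minimum takes the schematic form $\p_t Z \ge (\text{second-order terms in }p,q) + (\text{nonnegative reaction})$, so that $\min_{p,q} Z(\cdot,\cdot,t)\ge 0$ is preserved provided it holds at $t=0$. The subtle point is the behavior on the diagonal $\{p=q\}$, where both $d$ and $\ell$ vanish; here one matches the Taylor expansion $d = \ell - \tfrac{1}{24}\kappa^2\ell^3 + O(\ell^5)$ with the expansion of the model and verifies that the inequality is automatically strict on the diagonal. By taking the initial paperclip sufficiently pinched, the hypothesis $Z(\cdot,\cdot,0)\ge 0$ is achieved for any given embedded $X_0$.

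Once the chord-arc estimate is in hand, the conclusion is standard: a lower bound on $d/\ell$ prevents any Type-II singularity (cusps, self-contact, interior grim-reaper formation), so the only possible singularity at time $T$ is Type-I. Huisken's monotonicity formula then forces the parabolically rescaled curves to converge to a shrinking self-similar embedded solution; the only such solutions in the plane are the round circles, and hence $X(\,\cdot\,,t)$ is convex for all $t$ sufficiently close to $T$. The main obstacle, as in the original papers, is engineering the comparison function $\psi$ so that the diffusion in $(p,q)$ actually dominates the bad reaction term $\kappa^3$ arising from \eqref{eq:dt kappa}, and handling the delicate analysis on the singular diagonal $\ell\to 0$; everything else reduces to bookkeeping with the identities of Exercise \ref{ex:basic identities arbitrary}.
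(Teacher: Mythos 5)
First, a point of comparison: the notes do not prove Grayson's theorem at all --- it is stated as a quoted result, with references to Grayson's original paper and to the Andrews--Bryan simplification of Huisken's distance comparison --- so there is no internal proof to measure your sketch against. What you have written is an outline of exactly that Andrews--Bryan chord-arc strategy, which is a legitimate route, but as it stands it is a roadmap rather than a proof, and one of its few concrete claims is wrong. At an interior spatial minimum of $Z(p,q,t)=d-\frac{L}{\pi}\psi(\pi\ell/L,t)$ the first-variation conditions do \emph{not} say that the chord is orthogonal to the tangents at the endpoints; they say that the unit chord $w=(X(p,t)-X(q,t))/d$ makes equal angles with the two tangent vectors, with $w\cdot\tau$ equal (up to sign) to $\psi'$, and it is precisely this equal-angle structure that feeds into the second-variation computation. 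Orthogonality would only hold where $\psi'=0$. Beyond that, the actual analytic content of the argument --- the choice of $\psi$ from the Angenent ``paperclip'' so that the profile satisfies the right differential inequality, the verification that the second-order terms dominate the reaction coming from $\kappa^3$, the asymptotics on the diagonal $\ell\to0$, and the initial-time comparison for an arbitrary embedded curve --- is named but not carried out, and these are the parts where the theorem is hard.

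Second, your concluding step imports machinery that the Andrews--Bryan proof is specifically designed to avoid and that itself requires substantial proof: the Type I/Type II dichotomy, Huisken's monotonicity formula, and the classification of embedded closed self-shrinkers (circles versus the non-embedded Abresch--Langer curves). Even granting all of that, ``the parabolic rescalings converge to a round circle'' does not immediately yield the statement as posed, namely that there is a time $T^*<T$ at which the \emph{whole} curve $X(\,\cdot\,,T^*)$ is convex; one must argue that the entire curve participates in the blow-up (which again uses the chord-arc bound) and then upgrade asymptotic roundness of rescalings to convexity of the unrescaled curve at some finite time, e.g.\ via a curvature-pinching argument or by invoking Gage--Hamilton-type estimates. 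In the Andrews--Bryan paper this is short-circuited: the chord-arc bound directly yields a curvature bound in terms of the length scale, from which roundness (and hence eventual convexity) follows without the monotonicity formula. So the proposal correctly identifies the modern strategy, but the key pointwise identity at the minimum is misstated, the core computations are absent, and the endgame as described has a genuine gap between ``round blow-up limit'' and ``convex at some $T^*\in(0,T)$''.
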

	
		Figure~\ref{fig:spiral} shows a simulation of a classical example (which was probably inspired by Mullins \cite[Fig.~7]{Mullins}) illustrates the beauty of Grayson's theorem. 
		A simplified and somewhat less efficient version of the code I have used to create the pictures can be found in the Appendix \ref{sec:appendix}. I would recommend the reader to try out several examples with this simple code to get familiar with MCF.
		
		\begin{figure}
		\includegraphics[width=2cm]{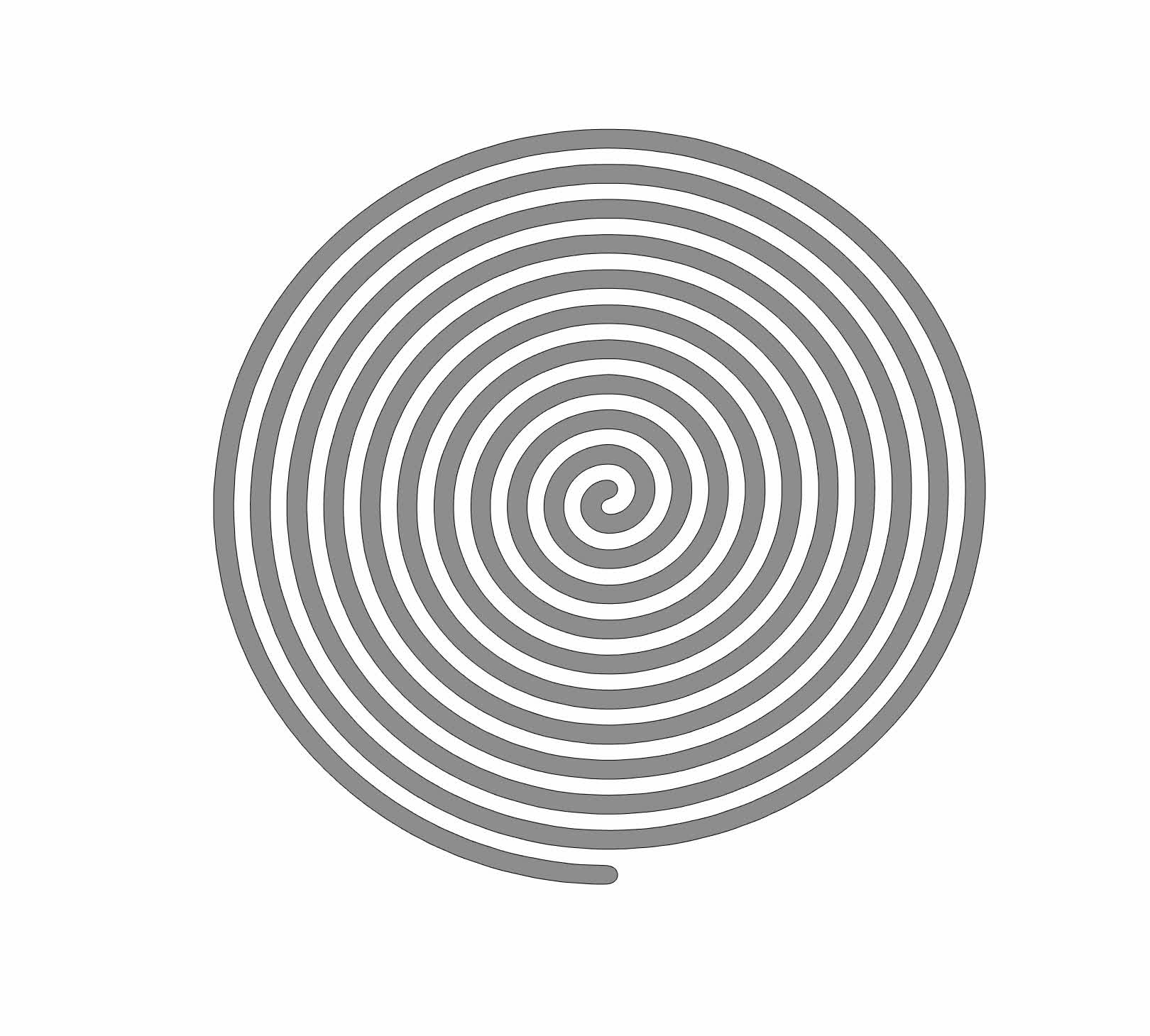}
		\foreach \n in {0200,0400,0600,0800,1000,1200,1400,1600,1800,2000,2200,2400,2600,2800,3000,3200,3400,3600,3800,4000,4200,4400,4600,4800,5000,5200,5400, 5600, 5692} {
				\includegraphics[width=2cm]{thumb\n.jpg}
		}
		\caption{Snapshots of a spiral moving by MCF at equidistant time instances (displayed in lexicographical order). Due to the large curvature of the ``tip'', the spiral unwinds and becomes convex before it collapses. In the last line the curve has completely unwound, then becomes convex, and then rounder, the smaller it gets.}
		\label{fig:spiral}
		\end{figure}

	\subsection{Embedded surfaces and singularities}
	\label{sec:class surfaces}
		
		In this section, we want to get a feeling for what may go wrong in higher dimensions.

	
		Huisken \cite{HuiskenConvex} could generalize Theorem \ref{theorem:GageHamilton} to the case of surfaces: convex surfaces shrink and become rounder and rounder before they disappear. 
		However, Grayson's theorem does not have an analog in higher dimensions: Even smoothly embedded surfaces in $\R^3$ may develop singularities in finite time, and before their extinction as the next example shows.
		
		\begin{example}[Dumbbell: Grayson's example] 
			Think of two large spheres in $\R^3$ connected by a thin tube. 
			The tube will disappear before the spheres have disappeared, creating a ``neck-pinch singularity''. This can be nicely proven by comparison with large balls inside the two spheres and Angenent's torus around the neck, see the next two examples.
		\end{example}
		
		\begin{example}[Spheres]
			Let $X_0(u) = R_0 u$, $u\in \sphere^{d-1}$ for some $R_0>0$. Then MCF becomes a simple ODE for the radius of $X(u,t)=R(t) u$:
			\begin{align*}
				\ddt R = -\frac{d-1}{R}
			\end{align*}
			with the solution $R(t)=\sqrt{R_0^2-2(d-1)t}$, $t\in [0,\frac{R_0^2}{2(d-1)})$.		
		\end{example}
	
		\begin{example}[Angenent's torus]
			There exists a torus $X\colon S^1\times S^1\to \R^3$, which shrinks self-similarly by MCF.
		\end{example}

		We consider a closed $(d-1)$-dimensional manifold $M$ embedded into $\R^d$ by a map $X_0\colon M \to \R^d$. 
		We are interested in the evolution of the parametrized surface $\Sigma_0 := X_0(M)$. 
		In the context of paramterized hypersurfaces, the mean curvature flow equation takes the form
		\begin{align}\label{eq:huisken MCF}
			\p_t X(x,t) =  -H(x,t) \nu(x,t)
		\end{align} 
		denotes the mean curvature vector of the surface $\Sigma(t)= X(M,t)$ at the point $X(x,t)$.
		The unknown here is a one-parameter family of embeddings $X(\,\cdot\,, t) \colon M \to \R^d$. 
		The PDE can be written as
		\begin{align}\label{eq:huisken MCF diffusion}
			\p_t X (x,t) = \Delta_{\Sigma(t)} X(x,t),
		\end{align}
		where $\Delta_{\Sigma(t)}$ denotes the Laplace-Beltrami operator on $\Sigma(t)$.
		That means, we are again solving a diffusion equation.
		As we have already seen in the case of curves, the equation is nonlinear, which here again is due to the fact that the metric on the surface w.r.t.\ which we take the Laplace-Beltrami operator on the right-hand side of \eqref{eq:huisken MCF diffusion} depends on the solution $X$ itself.
		
		Before we continue, let's be more precise about the right-hand side of \eqref{eq:huisken MCF}.
		Here, we follow Huisken's fundamental paper \cite{HuiskenConvex}; let $g=g(\,\cdot\,,t)=(g_{ij})$ denote the metric on $\Sigma(t)$, i.e.,
		\begin{align}
			g_{ij} = \p_i X \cdot \p_j X,
		\end{align}
		in which $\cdot$ denotes the scalar product in $\R^d$ and we supress the dependence on the point $x\in M$ and time $t>0$.
		As usual, with a slight abuse of notation, we denote its inverse matrix by $g^{-1} = (g^{ij})$. 
		Furthermore, we denote by $\nu$ the unit normal vector pointing into the exterior (the unbounded connected component of $\R^d \setminus \Sigma(t)$ (recall $\Sigma(t)$ is assumed to be a closed embedded hypersurface); 
		finally, we denote by $A= (h_{ij})$ the second fundamental form of $\Sigma(t)$, in coordinates
		\begin{align}
			h_{ij} = - \nu\cdot \frac{\p^2 X}{\p x_i \p x_j}.
		\end{align}
		
		Then the scalar mean curvature, with our present sign convention, is simply defined as the trace of $A$ w.r.t.\ the metric $g$, i.e.,
		\begin{align}
			H = \sum_{i,j} g^{ij} h_{ij}
		\end{align}
		and the norm of the second fundamental form will often be denoted by
		\begin{align}
			|A|^2 = \sum_{i,j}h^i_j h^j_i = \sum_{i,j,k,\ell} g^{ik}h_{kj} g^{j\ell}h_{\ell i}.
		\end{align}
		We will use $g^{ij}$ to raise and $g_{ij}$ to lower indices, e.g., $h^i_j = \sum_k g^{ik}h_{kj}$.
		
		Furthermore, we recall the Christoffel symbols
		\begin{align}
			\Gamma_{ij}^k = \frac12 \sum_\ell g^{k\ell} \left( \p_i g_{j\ell} + \p_j g_{i\ell} - \p_\ell g_{ij} \right)
		\end{align}
		and the Levi-Civita connection $\nabla=\nabla_{\Sigma(t)}$ is given by
		\begin{align}\label{eq:def levi civita}
			(\nabla_i Y)^k = \p_i Y^k + \sum_j \Gamma_{ij}^k Y^j
		\end{align}
		for any smooth vector field $Y$.
		The Laplacian $\Delta =\Delta_{\Sigma(t)}$ then is 
		\begin{align}
			\Delta Y = \sum_{i,j} g^{ij} \nabla_i \nabla_j Y.
		\end{align}
		
		\begin{lemma}[Evolution of geometric quantities]
			Let $X$ solve MCF in the parametric setting \eqref{eq:huisken MCF}.
			Then
			\begin{align}
				\p_t g_{ij} &= -2 H h_{ij} \label{eq:huisken dt g}	\\
				\p_t g^{ij} &= 2H h^{ij} \label{eq:huisken dt g inverse}\\
				\p_t \nu &=  -\nabla H	\label{eq:huisken dt nu}	\\
				\p_t h_{ij} &= -\nabla_i \nabla_j H +H \sum_{k,\ell} h_{jk}g^{k\ell}h_{\ell i} \label{eq:huisken dt second ff}
			\end{align}
		\end{lemma}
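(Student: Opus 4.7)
The plan is to derive each identity by direct differentiation, using \eqref{eq:huisken MCF} and systematically decomposing vectors into tangential and normal parts via the Gauss formula $\p_i \p_j X = \sum_k \Gamma_{ij}^k \p_k X - h_{ij} \nu$ and the Weingarten relation $\p_i \nu = \sum_k h_i^k \p_k X$. As a preliminary I would derive the latter two from the identities $\nu \cdot \p_j X = 0$ and $|\nu|^2 = 1$: differentiating the first in the direction $\p_i$ yields $\p_i \nu \cdot \p_j X = -\nu \cdot \p_i \p_j X = h_{ij}$, while the second forces $\p_i \nu$ to be purely tangential, and the two facts together give Weingarten.

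For \eqref{eq:huisken dt g}, I would apply Leibniz to $g_{ij} = \p_i X \cdot \p_j X$, commute $\p_t$ and $\p_i$, substitute $\p_t X = -H \nu$, and drop the terms containing $\nu \cdot \p_j X = 0$. What remains is $-H\,\p_i \nu \cdot \p_j X - H\, \p_i X \cdot \p_j \nu = -2 H h_{ij}$ by the preliminary identity. Formula \eqref{eq:huisken dt g inverse} then follows from differentiating $\sum_j g^{ij} g_{jk} = \delta^i_k$ in $t$, solving for $\p_t g^{ij}$, and raising indices with $g^{-1}$. For \eqref{eq:huisken dt nu}, I note that $|\nu|^2 = 1$ forces $\p_t \nu$ to be tangent to $\Sigma(t)$, so its expansion in the frame $\{\p_k X\}$ is determined by the inner products $(\p_t \nu) \cdot \p_k X$. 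Differentiating $\nu \cdot \p_k X = 0$ in $t$ and substituting MCF produces $(\p_t \nu) \cdot \p_k X = -\nu \cdot \p_k \p_t X = \p_k H$, so $\p_t \nu = \sum_{k,\ell} g^{k\ell} (\p_\ell H)\, \p_k X$, i.e.\ the intrinsic gradient of $H$ viewed as a tangent vector (the sign appearing in \eqref{eq:huisken dt nu} then reflects the author's convention for the symbol $\nabla H$).

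The main obstacle is \eqref{eq:huisken dt second ff}. Starting from $h_{ij} = -\nu \cdot \p_i \p_j X$, the time derivative splits as $\p_t h_{ij} = -(\p_t \nu) \cdot \p_i \p_j X - \nu \cdot \p_i \p_j \p_t X$. For the first term, the Gauss decomposition of $\p_i \p_j X$ combined with tangentiality of $\p_t \nu$ isolates a pure Christoffel contribution $\pm\sum_k \Gamma_{ij}^k \p_k H$, since the $-h_{ij}\nu$ part of the decomposition is killed by $\p_t \nu \cdot \nu = 0$. For the second, I would expand $\p_i \p_j(H \nu)$ by Leibniz and take its inner product with $\nu$; the cross terms drop out through $\nu \cdot \p_i \nu = 0$, leaving $\p_i \p_j H + H\,\nu \cdot \p_i \p_j \nu$. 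The last piece is then handled by the identity $\nu \cdot \p_i \p_j \nu = -\p_i \nu \cdot \p_j \nu$ (from two derivatives of $|\nu|^2 = 1$) combined with Weingarten, which rewrites $\p_i \nu \cdot \p_j \nu = \sum_{k,\ell} h_{ik} g^{k\ell} h_{\ell j}$. Finally, recognizing $\p_i \p_j H - \sum_k \Gamma_{ij}^k \p_k H$ as the covariant Hessian $\nabla_i \nabla_j H$ via \eqref{eq:def levi civita} (applied to the gradient of $H$) assembles everything into \eqref{eq:huisken dt second ff}. The hardest part is really the bookkeeping: carefully separating tangential and normal contributions at each step, and keeping the signs straight between $\p_t X = -H \nu$, the defining sign convention $h_{ij} = -\nu \cdot \p_i \p_j X$, and the outward orientation of $\nu$.
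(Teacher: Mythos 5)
Your proof is correct and follows essentially the same route as the paper: differentiate directly, insert $\p_t X=-H\nu$, and split into tangential and normal parts via the Gauss--Weingarten relations; the only cosmetic difference is that you treat the term $H\,\nu\cdot\p_i\p_j\nu$ through the identity $\nu\cdot\p_i\p_j\nu=-\p_i\nu\cdot\p_j\nu$ plus Weingarten on both factors, whereas the paper substitutes the Weingarten relation once and pairs with $\p_i\p_\ell X\cdot\nu$. One remark on the sign you hedged about: with the stated conventions your computation yields $\p_t\nu=+\nabla H$ and $\p_t h_{ij}=\nabla_i\nabla_j H-H\sum_{k,\ell}h_{ik}g^{k\ell}h_{\ell j}$ (check it on the shrinking sphere, or note these are precisely the identities used in the proof of \eqref{eq:huisken dt mean curvature}), so the opposite signs printed in \eqref{eq:huisken dt nu} and \eqref{eq:huisken dt second ff} are slips in the statement rather than a different convention for $\nabla H$; there is no gap in your argument.
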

	
		\begin{proof}
			\emph{Argument for \eqref{eq:huisken dt g}.}
				Since $\nu \cdot \p_j X=0$, we have
				\begin{align}
					h_{ij} = - \nu \cdot \p_i \p_j X = - \p_i (\nu \cdot \p_j X) + \p_i \nu \cdot \p_j X = \p_i \nu \cdot \p_j X.
				\end{align}
				Hence, using once more $\nu \cdot \p_j X=0$, we may compute
				\begin{align*}
					\p_t g_{ij} 
					&= \p_t \left( \p_i X \cdot \p_j X\right) 
					\\&=\p_i (-H\nu) \cdot \p_j X + \p_i X \cdot \p_j (-H\nu).
			 		\\&= -H \p_i \nu \cdot \p_j X -H  \p_i X \cdot \p_j \nu
					\\&= - 2 H h_{ij}.
				\end{align*}
			
			\emph{Argument for \eqref{eq:huisken dt g inverse}.}
				For the remainder of the proof 
				
				Since $\sum_\ell g^{i\ell} g_{\ell k} = \delta^i_k = \const$, we have
				\begin{align*}
					0 = \sum_\ell \p_t (g^{i\ell }g_{\ell k}) 
					= \sum_\ell \p_t g^{i\ell }g_{\ell k} + \sum_\ell g^{i\ell }\p_t g_{\ell k}.
				\end{align*}
				Rearranging and contracting with $g^{kj}$, we obtain
				\begin{align*}
					 -\sum_{k,\ell} g^{i\ell}\p_t g_{\ell k}g^{kj} 
					 =\sum_{k,\ell} \p_t g^{i\ell} g_{\ell k}g^{kj} 
					 = \sum_{k,\ell}  \p_t g^{i\ell}  \delta^j_\ell
					 = \p_t g^{ij}.
				\end{align*}
				Finally, using \eqref{eq:huisken dt g}, we obtain \eqref{eq:huisken dt g inverse}.
				
			\emph{Argument for \eqref{eq:huisken dt nu}.}
				Differentiating $|\nu|^2=1$, we obtain $ \nu \cdot \p_t \nu=0$ and $\nu \cdot \p_i \nu =0$. In particular, $\p_t \nu$ is a tangent vector field and can be expressed in the tangent frame $(\p_i X)_i$. 
				Since $\nu \cdot \p_i X =0 $ we have $\p_t \nu \cdot \p_i X = \p_t (\nu \cdot \p_i X) - \nu \cdot \p_i \p_t X=- \nu \cdot \p_i \p_t X$. 
				Combining these observations, we can simply compute
				\begin{align*}
					\p_t \nu
					&=\sum_{i,j}( \p_t \nu \cdot \p_i X) g^{ij} \p_j X
					\\&= -\sum_{i,j}( \nu \cdot \p_i \p_t X) g^{ij} \p_j X
					\\&= -\sum_{i,j}( \nu \cdot \p_i (-H\nu)) g^{ij} \p_j X
					\\&=\sum_{i,j}\p_i H g^{ij}\p_j X,
				\end{align*}
				which is precisely the tangent vector field $\nabla H$ written in coordinates.

			\emph{Argument for \eqref{eq:huisken dt second ff}.}
			Using $\p_i \nu \cdot \nu =0$, we compute
			\begin{align*}
				\p_t h_{ij} 
				&= - \p_t \left( \p_i\p_j X\cdot \nu \right)
				\\&= \p_i\p_j (H\nu) \cdot \nu - \p_i\p_j X \cdot \p_t \nu
				\\&= \p_i\p_j H +H \p_i\p_j \nu \cdot \nu - \sum_{k,\ell} \p_k H g^{k\ell} \p_i \p_j X \cdot \p_{\ell} X.
			\end{align*}
			Using \eqref{eq:huisken dt nu}, the Gau\ss-Weingarten relations
			\begin{align}\label{eq:huisken Gauss Weingarten}
				\p_j \nu = \sum_{k,\ell} h_{jk} g^{k\ell} \p_\ell X \quad \text{and} \quad 
				\p_i\p_j X = \sum_{m}\Gamma_{ij}^m \p_m X -h_{ij} \nu,
			\end{align}
			and $\p_\ell X \cdot \nu =0$, we obtain	
			\begin{align*}
				\p_t h_{ij} 
				&=  \p_i\p_j H +H \sum_{k,\ell}\p_i\left( h_{jk} g^{k\ell} \p_\ell X \right)\cdot \nu
				- \sum_{k,\ell,m} \p_k H g^{k\ell} g_{m\ell}	\Gamma_{ij}^m				
				\\&=  \p_i\p_j H + H \sum_{k,\ell}  h_{jk} g^{k\ell} \p_i\p_\ell X \cdot \nu
				- \sum_{k,m} \p_k H \delta^{k}_m \Gamma_{ij}^m
				\\&= \p_i\p_j H - \sum_{k}\Gamma_{ij}^k\p_k H  +H \sum_{k,\ell}  h_{jk} g^{k\ell} h_{i\ell}.			
			\end{align*}
			Since $\nabla_j H = \p_j H$ and by the definition of the covariant derivative of a vector field \eqref{eq:def levi civita}, the first two terms combine to the desired $\nabla_i \nabla_j H$.
		\end{proof}
	
		\begin{exercise}
			Derive \eqref{eq:huisken Gauss Weingarten}.
		\end{exercise}
	
		Now we are ready to derive a simple PDE for the mean curvature $H$. In particular, $H>0$ is preserved, which motivates studying MCF of such ``mean convex'' surfaces.
		\begin{corollary}
			If $X$ solves MCF in the parametric setting \eqref{eq:huisken MCF}, then the mean curvature satisfies the evolution equation
			\begin{align}
				\p_t H = \Delta H +|A|^2 H. \label{eq:huisken dt mean curvature}
			\end{align}
			Therefore, if $H>0$ at $t=0$, then $H>0$ at all times $t\in [0,T]$. 
		\end{corollary}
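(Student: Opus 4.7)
The plan is to differentiate the identity $H = \sum_{i,j} g^{ij} h_{ij}$ in time via the product rule and then substitute the two evolution equations \eqref{eq:huisken dt g inverse} and \eqref{eq:huisken dt second ff} from the preceding lemma.

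First I would expand
\begin{align*}
\p_t H = \sum_{i,j} (\p_t g^{ij})\, h_{ij} + \sum_{i,j} g^{ij}\, \p_t h_{ij}.
\end{align*}
By \eqref{eq:huisken dt g inverse}, the first sum equals $2H \sum_{i,j} h^{ij} h_{ij} = 2H|A|^2$. For the second sum, \eqref{eq:huisken dt second ff} contributes two terms: the contraction $\sum_{i,j} g^{ij} \nabla_i \nabla_j H$ is by definition $\Delta H$, and the quadratic curvature piece $H \sum_{i,j,k,\ell} g^{ij} h_{jk} g^{k\ell} h_{\ell i}$ collapses to a multiple of $H|A|^2$ after relabelling indices in the definition of $|A|^2$. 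Adding the contributions and tracking signs yields the claimed $\p_t H = \Delta H + |A|^2 H$.

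For the second assertion, the plan is to apply a parabolic minimum principle to \eqref{eq:huisken dt mean curvature}. Since $M$ is compact and the solution is smooth on $[0,T']$ for any $T' < T$, the coefficient $|A|^2$ is bounded there and the PDE has the form $(\p_t - \Delta) H = c(x,t)\, H$ with $c = |A|^2 \ge 0$. A standard differential-inequality argument applied to $H_{\min}(t) := \min_M H(\,\cdot\,,t)$ at a minimizing point (where $\Delta H \ge 0$) gives $\ddt H_{\min}(t) \geq -\|c\|_{L^\infty} |H_{\min}(t)|$, and Gronwall's inequality then yields $H_{\min}(t) \geq e^{-Ct} H_{\min}(0) > 0$ on each such interval.

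The main obstacle I anticipate is entirely bookkeeping: in the algebraic step that combines the two $H|A|^2$ contributions, one from $\p_t g^{ij}$ and one from $\p_t h_{ij}$, they must partially cancel so that exactly one copy of $|A|^2 H$ survives. A single sign slip relative to the convention $h_{ij} = -\nu \cdot \p_i \p_j X$ would produce a spurious coefficient (either $3|A|^2 H$ or $-|A|^2 H$), so care is needed in each tensor contraction. Once the identity is secured, the maximum-principle step is routine.
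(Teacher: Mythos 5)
Your proposal follows essentially the same route as the paper: differentiate $H=\sum_{i,j}g^{ij}h_{ij}$ in time, substitute \eqref{eq:huisken dt g inverse} and \eqref{eq:huisken dt second ff} to obtain $2H|A|^2+\Delta H-H|A|^2=\Delta H+|A|^2H$, and then conclude positivity by the parabolic maximum principle (the paper invokes this in one line, while you spell out the standard $H_{\min}$/Gronwall argument). The sign bookkeeping you flag is indeed the only delicate point: the contraction must come out as $\sum_{i,j}g^{ij}\p_t h_{ij}=\Delta H-H|A|^2$, which is exactly what the paper's computation uses.
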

		
		\begin{proof}
				Using \eqref{eq:huisken dt g inverse} and \eqref{eq:huisken dt second ff}, we compute
				\begin{align*}
					\p_t H 
					&= \sum_{i,j} \p_t \left(g^{ij}h_{ij}\right)
					\\&= \sum_{i,j} \p_t g^{ij} h_{ij} + g^{ij} \p_t h_{ij}
					\\&= 2 H \sum_{i,j} h^{ij}h_{ij} + \sum_{i,j} g^{ij} \nabla_i \nabla_j H - H \sum_{i,j,k,\ell} g^{ij} h_{jk} g^{k\ell} h_{\ell i}
					\\&= 2 H|A|^2 + \Delta H - H|A|^2.
				\end{align*} 
				This proves \eqref{eq:huisken dt mean curvature}. 
				The positivity of $H$ is then preserved by the parabolic maximum principle.
		\end{proof}
		
		\begin{exercise}
			Let $X$ solve \eqref{eq:huisken MCF}. Show that the second fundamental form satisfies
			\begin{align}
				\p_t |A|^2 = \Delta |A|^2 -2|\nabla A|^2 +2|A|^4.\label{eq:huisken dt norm of second ff}
			\end{align}
		\end{exercise}
		
		\begin{exercise}
			Let $X \colon M\times [0,T] \to \R^d$ be smooth such that $\p_t X = V \nu$ for some smooth function $V$.  Show that the area element $\dL S := \sqrt{\det g} \dL x$ satisfies
			\begin{align*}
				\p_t\dL S =  VH \dL S 
			\end{align*}
			so that in particular the total surface energy
			\begin{align*}
				E(X) := \int_X 1 \dL S = \int_M \sqrt{\det g(x,t)} \dL x
			\end{align*}
			satisfies 
			\begin{align*}
				\frac{\dL }{\dL t} E(X(\,\cdot\,,t)) = \int_X VH\dL S.
			\end{align*}
			For MCF, this means that the total surface energy satisfies the energy-dissipaton relation
			\begin{align*}
				\frac{\dL }{\dL t} E(X(\,\cdot\,,t)) = -\int_X H^2\dL S.
			\end{align*}
			\begin{hint}
				Parametrize over the surface at a fixed time $t_0$, say, $t_0=0$, and then use the expansion of the determinant $\det(I + t M) = 1 + t \trace(M) +O(t^2)$.
			\end{hint}
		\end{exercise}
	
		The following localized version of optimal energy dissipation was introduced by Brakke \cite{Brakke} to define his weak solutions, which are merely varifolds, see \cite{KimTonegawa} for a refined version of his result.
		
		\begin{exercise}[Brakke's (in-)equality]\label{exercise:brakke}
			Let $X \colon M\times [0,T] \to \R^d$ be smooth and denote by $V$ its normal velocity (in direction of the unit normal vector $\nu$). 
			Show that for any test function $\varphi \in C^1(\R^d\times[0,T])$
			\begin{align*}
				\ddt \int_X \varphi \dL S 
				= \int_X \big( \varphi VH + V\nu \cdot \nabla \varphi + \p_t \varphi \big) \dL S.
			\end{align*}
			In particular, if $X$ satisfies  \eqref{eq:huisken MCF}, then
			\begin{align*}
					\ddt \int_X \varphi \dL S 
				= \int_X \big( -\varphi H^2 - H\nu \cdot \nabla \varphi + \p_t \varphi \big) \dL S.
			\end{align*}
		\end{exercise}
	
	Until now, we have used basic differential geometry to study \eqref{eq:huisken MCF}. However, it is also instructive to keep in mind the following special case of graphs over a hyperplane.
	
	\begin{exercise}[Graphs]
		Show that if $\Sigma(t) $ is the graph of a function $f(\,\cdot\,,t)\colon \omega \subset \R^{d-1} \to \R$, then MCF is equivalent to
		\begin{align*}
		\frac{\p_t f}{ \sqrt{1+|\nabla f|^2}} = \nabla \cdot \left( \frac{\nabla f}{\sqrt{1+|\nabla f|^2}} \right),
		\end{align*}
		or equivalently
		\begin{align*}
		\p_t f = \Delta f - \frac{ \nabla f}{\sqrt{1+|\nabla f|^2}}\cdot  \nabla^2 f \,\frac{ \nabla f}{\sqrt{1+|\nabla f|^2}}.
		\end{align*}
	\end{exercise}

	\begin{remark}
		The previous exercise shows that MCF is not a PDE in divergence form, which makes it seemingly difficult to put derivatives on test functions by integration by parts. 
		A powerful tool in studying (two-phase!) MCF is therefore the viscosity approach, since the theory of viscosity solutions applies for a large class of degenerate elliptic and parabolic PDEs in non-divergence form.
	\end{remark}
	%
	%
	\subsection{The (signed) distance function}
	\label{sec:class signed dist}
	
	Of course, the mean curvature depends on the embedding, so it depends on the extrinsic geometry of $\Sigma(t)$. 
	Let us take this thought a little further and instead of studying the evolving surfaces $\Sigma(t) = \p \Omega(t)$ directly, let us study properties of their signed distance function
	\begin{align}\label{dist eq def sdist}
		s(x,t) := \dist(x,\Omega(t)) - \dist(x,\R^d\setminus \Omega(t)),
	\end{align}
	cf.\ Figure \ref{fig:sdist}. 
	
	\begin{figure}
		\centering
			\begin{tikzpicture}[scale=0.7]
			\draw(-4,0)--(4,0);
			\draw[thick,blue,->] (0,0) -- (1,0) node[below]{$\nu(x_0,t)$};
			\draw  (-2,0) node[above]{$\Omega(t)$} ;
			\draw[circle,fill,inner sep=1pt] (0,0) node[above]{$x_0$};
			\draw  (2,0) node[above]{$\R^d\setminus \Omega(t)$};
			\draw[red] (-3,-3)--(3,3) node[left]{$s(x,t)$};
			\end{tikzpicture}
			$\quad$
			\begin{tikzpicture}[scale=0.7]
				\begin{axis}[ view={0}{90}, xticklabels={,,}, yticklabels={,,},colormap/blackwhite]
					\addplot3[  surf, shader=interp, domain=-2:2, y domain=-2:2] {(x^2+y^2)^0.5};
				\end{axis}
				\draw[thick,blue,->] (5,2.85) node[above]{{\color{black}$x_0$}} -- ++(1,0) node[below]{$\nu(x_0,t)$};
			\end{tikzpicture}

		\caption{The signed distance function $s(x,t)$. Left: The graph of $s(x,t)$ plotted along the normal direction around a point $x_0\in \Sigma(t) = \p \Omega(t)$. Right: Heat plot of $s(x,t)$ for $\Omega(t)$ a disk in $\R^2$.}
		\label{fig:sdist}
	\end{figure}
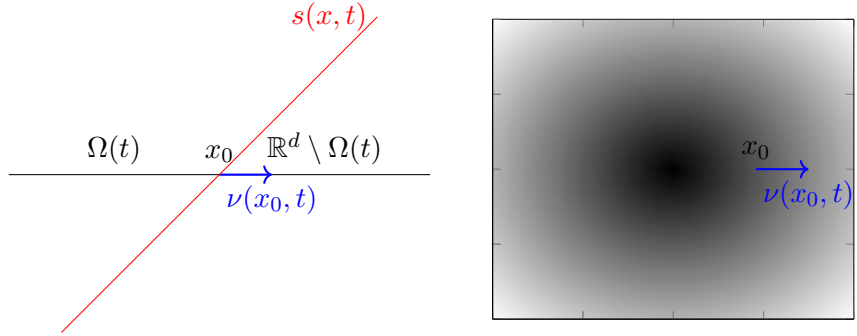
	
	\begin{lemma}\label{lemma:dist transport equation}
		If $\Sigma(t)= \p\Omega(t)$ evolves smoothly with normal velocity $V$ on the time interval $[0,T]$, then there exists $\delta>0$ such that the signed distance function $s$ given by  \eqref{dist eq def sdist} is smooth in the space-time neighborhood
		\begin{align*}
			\mathcal U_\delta := 
			\bigcup_{t\in [0,T]}	B_\delta (\Sigma(t)) \times \{t\} =
			 \left\{ (x,t) \in \R^d \times [0,T] \colon |s(x,t)| \leq \delta\right\}.
		\end{align*}
		Furthermore, it solves the transport equation
		\begin{align*}
			\p_t s + (B \cdot \nabla) s =0 \quad \text{in }\mathcal{U}_\delta,
		\end{align*}
		where the vector field $B$ extends the normal velocity vector field via
		\begin{align*}
			B(x,t) := V(P(x,t),t)
			\,\nu(P(x),t).
		\end{align*}
		Here, $P(\,\cdot\,,t):= P_{\Sigma(t)}$ denotes the nearest point projection from $ 	B_\delta (\Sigma(t)) $ to $\Sigma(t)$ and $\nu(x,t):= \nu_{\Sigma(t)}(x)$ denotes the outward unit normal vector of $\Omega(t)$ at $x\in \Sigma(t)=\p \Omega(t)$.
	\end{lemma}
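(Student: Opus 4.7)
The plan is to split the proof into two parts: first, the smoothness statement, which is a uniform-in-$t$ tubular neighborhood result; second, the transport equation, which follows from a characteristic-style argument along the normal lines.

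For the smoothness part, I would apply the tubular neighborhood theorem fiberwise and patch in time. For each fixed $t$, since $\Sigma(t)$ is a compact, smoothly embedded hypersurface, its reach is positive and the nearest-point projection $P(\,\cdot\,,t)$ is smooth on a neighborhood of $\Sigma(t)$. Using the smoothness of the evolution $t\mapsto \Sigma(t)$ on the compact interval $[0,T]$, together with a covering/compactness argument, one obtains a uniform $\delta>0$ for which $P\colon \mathcal U_\delta\to \bigcup_t \Sigma(t)\times\{t\}$ is well-defined and jointly smooth in $(x,t)$. Then $s(x,t)=(x-P(x,t))\cdot \nu(P(x,t),t)$ is smooth on $\mathcal U_\delta$, and one reads off the standard identities $|\nabla s|\equiv 1$ and $\nabla s(x,t)=\nu(P(x,t),t)$ from the representation $x=P(x,t)+s(x,t)\,\nu(P(x,t),t)$.

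For the transport equation, note first that with the above identities,
\begin{align*}
B(x,t)\cdot\nabla s(x,t)=V(P(x,t),t)\,|\nu(P(x,t),t)|^2=V(P(x,t),t),
\end{align*}
so the claim reduces to $\p_t s(x_0,t_0)=-V(P(x_0,t_0),t_0)$ for arbitrary $(x_0,t_0)\in\mathcal U_\delta$. I would produce a curve $z(t)$ with $z(t_0)=x_0$ along which $s(z(t),t)$ is constant, then differentiate. Concretely, let $y(t)$ be the Lagrangian trajectory in $\Sigma(t)$ through $y(t_0)=P(x_0,t_0)$ (so $\dot y(t_0)\cdot \nu(y_0,t_0)=V(y_0,t_0)$) and set
\begin{align*}
z(t):=y(t)+s(x_0,t_0)\,\nu(y(t),t).
\end{align*}
For $t$ close to $t_0$ the point $z(t)$ remains in $\mathcal U_\delta$ with $P(z(t),t)=y(t)$, hence $s(z(t),t)\equiv s(x_0,t_0)$. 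Differentiating at $t_0$ gives $\nabla s(x_0,t_0)\cdot \dot z(t_0)+\p_t s(x_0,t_0)=0$; since $\p_t \nu(y(t),t)$ is tangent to $\Sigma(t_0)$ at $t_0$ (from $|\nu|^2\equiv 1$), only the $\dot y$-term contributes to $\dot z(t_0)\cdot\nu(y_0,t_0)=V(y_0,t_0)$, yielding the claim.

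The main obstacle I anticipate is the uniform choice of $\delta$ and the rigorous handling of the projection along the curve $z(t)$: one must ensure that $\delta$ is small enough that $P(\,\cdot\,,t)$ is defined and smooth on $B_\delta(\Sigma(t))$ \emph{simultaneously} for all $t\in[0,T]$, and that the constructed curve $z(t)$ remains in that neighborhood with the expected projection. Both issues are compactness/continuity arguments; once settled, the computation above is immediate, and the everything-else is just reading off the standard formulas for $\nabla s$ and $|\nabla s|$ in the tubular neighborhood.
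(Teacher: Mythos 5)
Your proposal is correct, and its skeleton (compactness in $t$ to get a uniform $\delta$ below the reach, the identities $\nabla s=\nu\circ P$ and $|\nabla s|=1$, and the observation that tangential motion of points on $\Sigma(t)$ is invisible to $s$) matches the paper's. The only genuine difference is in how $\p_t s$ is computed: the paper keeps $x$ fixed, writes $s(x,t)=\pm|x-y(t)|$ with $y(t)=P(x,t)$ the moving nearest point, and differentiates directly, using that the tangential part of $\dot y$ is orthogonal to $\frac{x-y}{|x-y|}=\nu(y(t),t)$ while the normal part is forced to be $V(y(t),t)$ because $y(t)$ stays on $\Sigma(t)$; you instead build the constant-distance curve $z(t)=y(t)+s_0\,\nu(y(t),t)$ along a Lagrangian trajectory $y(t)$ and apply the chain rule to $s(z(t),t)\equiv s_0$, killing the $\nu$-derivative term via $|\nu|^2\equiv1$. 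Your characteristic-curve variant needs the slightly stronger tubular-neighborhood fact that $P(y+c\nu(y,t),t)=y$ and $s(y+c\nu(y,t),t)=c$ for $|c|<\delta$, which you correctly flag and which the uniform choice of $\delta$ provides, whereas the paper's fixed-$x$ differentiation avoids constructing any curve off the surface; both yield the same identity $\p_t s=-V\circ P=-B\cdot\nabla s$, and your treatment of the smoothness of $P$ and $s$ is, if anything, more explicit than the paper's.
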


	Before proving this lemma, let us first consider our case $V=-H$.
	As a direct consequence of the above lemma, we see that $s$ solves the linear diffusion equation on $\Sigma_t$. The next lemma takes this a step further and gives us approximate versions of such equations also away from the interface.
	
	\begin{lemma}\label{lemma:dist transport equation MCF}
		Under the same conditions as in Lemma \ref{lemma:dist transport equation}, if additionally, $\Sigma_t$ is a mean curvature flow, then there exists a constant $C<\infty$ depending on $(\Sigma_t)_{t\in[0,T]}$, such that
		\begin{align}\label{eq:dist transport MCF}
			\left|\p_t s - \Delta s \right| \leq C|s| \quad \text{in }\mathcal{U}_\delta
		\end{align}
		and similarly, the function $\phi := \frac12 s^2$ satisfies 
		\begin{align}\label{eq:dist transport MCF squared}
			\left|\p_t \phi - \Delta \phi  + 1 \right| \leq   C\phi \quad \text{in }\mathcal{U}_\delta.
		\end{align}
	\end{lemma}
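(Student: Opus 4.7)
The plan is to combine the transport equation from Lemma \ref{lemma:dist transport equation} (applied with normal velocity $V=-H$) with the eikonal identity $|\nabla s|^2 \equiv 1$ in $\mathcal U_\delta$ and a local expansion of $\Delta s$ around $\Sigma(t)$. First I apply Lemma \ref{lemma:dist transport equation} with $V(\cdot,t)=-H(\cdot,t)$, obtaining the advecting vector field $B(x,t) = -H(P(x,t),t)\,\nu(P(x,t),t)$. Since $s(\cdot,t)$ is the signed distance to $\Sigma(t)$, it satisfies $|\nabla s|=1$ on $\mathcal U_\delta$ and its gradient is the extended unit normal, $\nabla s(x,t) = \nu(P(x,t),t)$. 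Therefore $B\cdot\nabla s = -H(P(x,t),t)$ and the transport equation $\p_t s + B\cdot\nabla s = 0$ becomes
\begin{align*}
    \p_t s(x,t) = H(P(x,t),t) \quad \text{in } \mathcal U_\delta.
\end{align*}

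Second, I would relate $\Delta s$ in the tube to $H$ at the projection. At $x\in\mathcal U_\delta$, the level set $\{s(\cdot,t)=s(x,t)\}$ is the surface parallel to $\Sigma(t)$ at signed distance $s(x,t)$, whose principal curvatures at $x$ are $\kappa_i(P(x,t),t)/(1+s(x,t)\kappa_i(P(x,t),t))$ for $i=1,\dots,d-1$, where $\kappa_i(\cdot,t)$ denote the principal curvatures of $\Sigma(t)$. Summing and Taylor-expanding in $s$, and choosing $\delta$ small enough that $|s|\sup_i|\kappa_i|<\tfrac12$ on $\mathcal U_\delta$, gives
\begin{align*}
    \Delta s(x,t) = \sum_{i=1}^{d-1}\frac{\kappa_i(P(x,t),t)}{1+s(x,t)\kappa_i(P(x,t),t)} = H(P(x,t),t)+O(s(x,t)),
\end{align*}
with the implicit constant depending only on $(\Sigma(t))_{t\in[0,T]}$ via $\sup|A|^2$. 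Combining with the first step immediately yields \eqref{eq:dist transport MCF}.

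Third, for \eqref{eq:dist transport MCF squared}, I set $\phi = \tfrac12 s^2$ and compute $\p_t\phi = s\,\p_t s$, $\nabla\phi = s\,\nabla s$, and (using the eikonal identity) $\Delta\phi = |\nabla s|^2 + s\,\Delta s = 1 + s\,\Delta s$. Hence
\begin{align*}
    \p_t\phi - \Delta\phi + 1 = s\,(\p_t s - \Delta s),
\end{align*}
and multiplying the bound from \eqref{eq:dist transport MCF} by $|s|$ gives $|\p_t\phi - \Delta\phi + 1| \leq C s^2 = 2C\phi$, as claimed.

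The main obstacle is the local expansion of $\Delta s$ around $\Sigma(t)$, which packages the nontrivial fact that the Laplacian of the signed distance function at a point in the tube agrees, to leading order, with the scalar mean curvature at the nearest point of $\Sigma(t)$. A clean alternative that avoids the parallel-surface computation is to note that the smooth function $(x,t)\mapsto \Delta s(x,t) - H(P(x,t),t)$ vanishes on the smooth hypersurface $\Sigma(t)=\{s(\cdot,t)=0\}$ in $\mathcal U_\delta$, so a Taylor expansion along the normal direction produces the desired $O(|s|)$ bound; the constant is again controlled by the curvature of $\Sigma(t)$ uniformly in $t\in[0,T]$. The rest of the argument is a routine manipulation of the eikonal equation and the chain rule.
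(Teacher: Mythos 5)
Your proof is correct and structurally the same as the paper's: both start from the transport equation of Lemma \ref{lemma:dist transport equation} with $V=-H$, use $\nabla s=\nu$ and $\Delta s=H$ on $\Sigma(t)$ to identify $\p_t s(x,t)=(\Delta s)(P(x,t),t)$, and then reduce \eqref{eq:dist transport MCF squared} to \eqref{eq:dist transport MCF} via $\Delta\phi=1+s\,\Delta s$ (the paper gets $\p_t\phi$ by applying the transport equation to $g\circ s$ with $g=\frac12|\cdot|^2$, which is the same as your $\p_t\phi=s\,\p_t s$). The only real divergence is how the error $\Delta s(x,t)-H(P(x,t),t)=O(|s|)$ is quantified: you invoke the parallel-surface formula $\Delta s=\sum_i \kappa_i(P)/(1+s\kappa_i(P))$ and Taylor-expand, whereas the paper simply observes that $\Delta s$ is Lipschitz on the compact tube $\mathcal{U}_\delta$ and $|x-P(x,t)|=|s(x,t)|$, so $|(\Delta s)(x,t)-(\Delta s)(P(x,t),t)|\le (\Lip\Delta s)\,|s|$ --- which is precisely the ``clean alternative'' you sketch at the end. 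Your curvature-formula route is also fine (it buys the explicit Hessian structure the paper records only in a remark); note merely that the denominators $1+s\kappa_i$ are automatically bounded away from zero on $\mathcal{U}_\delta$ where $s$ is smooth, so your extra shrinking of $\delta$ is unnecessary --- and if you do shrink it, you should add the trivial far-field bound on $\mathcal{U}_\delta\setminus\mathcal{U}_{\delta'}$ (where $|s|\ge\delta'$ and $\p_t s-\Delta s$ is bounded) to recover the estimate on all of $\mathcal{U}_\delta$, since that is where the lemma asserts it.
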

	
	\begin{proof}[Proof of Lemma \ref{lemma:dist transport equation}]
		First note that, since we assume $\Sigma(t)$ to be smoothly embedded for all $t\in[0,T]$, since $[0,T]$ is compact, there exists $\delta>0$ such that for each $(x,t)\in \mathcal{U}_\delta$ there exists a unique point $y(t)=P(x,t) \in \Sigma(t)$ such that 
		\begin{align*}
			|s(x,t)| = \min_{y\in \Sigma(t)} |x-y| = |x-y(t)|.
		\end{align*}
		Let $t\in [0,T]$ and let $x\in \R^d \setminus \operatorname{clos} \Omega(t)$ be a point in the exterior, say. Then $s(x,t)>0$, hence $ s(x,t)= |x-y(t)|$.
		Furthermore,
		\begin{align}\label{eq:Ds x-y and nu}
			\nabla s(x,t) = \frac{x-y(t)}{|x-y(t)|}= \nu(y(t),t).
		\end{align}
		Although we don't know how $y(t)$ will evolve along the surface in tangential direction, its velocity in normal direction is completely determined by
		\begin{align*}
			\dot y(t) \cdot \nu(y(t),t) = V(y(t),t).
		\end{align*} 
		Hence, we may simply compute
		\begin{align*}
			\p_t s(x,t) 
			&= -\frac{x-y(t)}{|x-y(t)|} \cdot \dot y(t) 
			= -\nu(y(t)) \cdot \dot y (t)
			= -V(y(t),t) 
			\\&= - B(x,t) \cdot \nu(y(t),t) 
			= -B(x,t)\cdot \nabla s(x,t).\qedhere
		\end{align*}
	\end{proof}

	\begin{proof}[Proof of Lemma \ref{lemma:dist transport equation MCF}]
		\emph{Step 1: Argument for \eqref{eq:dist transport MCF}.}	
		In view of Lemma \ref{lemma:dist transport equation}, in our context of MCF, since $H = \div \nu  = \Delta s $ and $\nabla s =\nu$ on $\Sigma(t)$, we have
		\begin{align}\label{eq:claim H and Delta s}
			\p_t s(x,t) - (\Delta s)(P(x,t),t) =0\quad \text{in } \mathcal{U}_\delta.
		\end{align}
		Recalling the defining property $|x-P(x,t)| = |s(x,t)|$, this implies \eqref{eq:claim H and Delta s} with $C:= \Lip \Delta s$,  the Lipschitz constant of $\Delta s$ on $\mathcal{U}_\delta$.
		
		\emph{Step 2: Argument for \eqref{eq:dist transport MCF squared}.}
		First, note that by chain rule, for any $g\in C^1(\R)$, it holds
		\begin{align*}
			\p_t (g\circ s) +(B \cdot \nabla) (g\circ s) =0 \quad \text{in }\mathcal{U}_\delta.
		\end{align*} 
		Applying this to $g =\frac12 |\cdot |^2$ in the context of the statement of the corollary and then using $\nabla \phi = s\nabla s$, we obtain 
		\begin{align*}
				\p_t \phi(x,t) -s(x,t)(\Delta s)(P(x,t),t) =0 \quad \text{in }\mathcal{U}_\delta.
		\end{align*}
		Now compute $\Delta \phi = \nabla \cdot (s\nabla s) =s\Delta s + |\nabla s|^2 =  s\Delta s +1$, so that
		\begin{align*}
				\p_t \phi   - \Delta \phi 
				= \p_t \phi - s\Delta s -1 
				= -1 + O(s^2)\quad \text{in }\mathcal{U}_\delta,
		\end{align*}
		where the constant in the $O(s^2)$-term is equal to $\Lip \Delta s$.
	\end{proof}
	
	\begin{remark}
		One can show even more: the Hessian matrix $\nabla^2 s(x,t)$ can be written as
		\begin{align*}
			\nabla^2 s = \diag\left( \frac{-\kappa_1}{1-s\kappa_1}, \ldots, \frac{-\kappa_{d-1}}{1-s\kappa_{d-1}},0 \right) \quad \text{in } \mathcal{U}_\delta.
		\end{align*}
	\end{remark}

	\begin{exercise}
		Using $\p_t \phi = 0 $, $\Delta \phi =1$ on $\Sigma(t)$, reprove \eqref{eq:dist transport MCF squared} by a simple Taylor expansion of the function $\p_t \phi -\Delta \phi$.
		
		\begin{hint}
			Compute explicitly the first-order term $\p_t\nabla \phi - \Delta \nabla \phi$ in this Taylor expansion.
		\end{hint}
	\end{exercise}

	\begin{exercise}
		Define $\xi:= \nabla s$ in $\mathcal{U}_\delta$. Which PDE does $\xi$ solve?
	\end{exercise}
	We will come back to this exercise in the last section.
	
	\subsection{Comparison principle and level set formulation}
	\label{sec:visc motivation}
	We start with the crucial observation that two-phase MCF exhibits a geometric comparison principle:
	\begin{theorem}
		``Nested sets remain nested under MCF.''
	\end{theorem}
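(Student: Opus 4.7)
The plan is to argue by contradiction using the parabolic strong maximum principle, applied to the graph representation of the two evolving boundaries near a hypothetical first contact point.

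I first reduce to the strictly nested case $\Omega_1(0) \compemb \Omega_2(0)$ by a soft perturbation. If only $\Omega_1(0) \subset \Omega_2(0)$, one may slightly shrink $\Omega_1(0)$ along its outward normal by some $\eps > 0$ to obtain $\Omega_1^\eps \compemb \Omega_2(0)$ with smooth boundary. If the theorem is known for strictly nested initial data, continuous dependence of the classical flow on initial data (say in $C^{2,\alpha}$) allows one to pass $\eps \downarrow 0$. So assume $\dist(\p\Omega_1(0),\p\Omega_2(0))>0$ and set
\[
t_0 := \inf\{t \in [0,T) : \p\Omega_1(t) \cap \p\Omega_2(t) \neq \emptyset\}.
\]
If $t_0 = T$, there is nothing to show. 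Otherwise, by continuity of the smooth flows, $t_0 > 0$, there exists $x_0 \in \p\Omega_1(t_0) \cap \p\Omega_2(t_0)$, and $\p\Omega_1(t) \cap \p\Omega_2(t) = \emptyset$ for all $t < t_0$. Since $\Omega_1(t) \subset \Omega_2(t)$ on $[0,t_0)$ (no boundary intersection, continuous families starting nested), the inclusion persists at $t_0$, so at $x_0$ both surfaces are tangent and share a common outward unit normal $\nu$.

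Choose coordinates with $x_0 = 0$ and $\nu = e_d$. On some parabolic cylinder $B'_r \times [t_0-\tau,t_0]$, with $B'_r \subset \R^{d-1}$, each $\p\Omega_i(t)$ is the graph of a smooth function $f_i(\,\cdot\,,t)$, with $\Omega_i(t)$ the subgraph, and nesting translates to $h := f_2 - f_1 \geq 0$. Both $f_i$ solve the quasilinear graph MCF from the earlier exercise, which has the form $\p_t f = F(\nabla f, \nabla^2 f)$ with $F$ smooth and $\p_M F$ positive definite. By the fundamental theorem of calculus applied to $F$ between $(\nabla f_1, \nabla^2 f_1)$ and $(\nabla f_2, \nabla^2 f_2)$, the difference $h$ satisfies a linear equation
\[
\p_t h = a^{ij}(x,t)\, \p_i \p_j h + b^i(x,t)\, \p_i h
\]
with smooth, uniformly parabolic coefficients on $B'_r \times [t_0-\tau,t_0]$.

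Since no boundary contact occurs for $t < t_0$, one has $h > 0$ on $B'_r \times [t_0-\tau,t_0)$, whereas $h(0,t_0) = 0$. The parabolic strong maximum principle then forces $h \equiv 0$ on $B'_r \times [t_0-\tau,t_0]$, contradicting the strict positivity just established. The main obstacle is the careful set-up of a uniformly valid graph representation on a full parabolic cylinder up to $(x_0,t_0)$; this requires quantitative control on the tilt and smoothness of both surfaces in a space-time neighborhood of $(x_0,t_0)$, which is available because each flow is classical, hence smooth up to and including $t_0 < T$. A secondary technicality is making the initial perturbation in the first step quantitative enough to pass to the limit, but this is routine given standard short-time estimates for classical MCF.
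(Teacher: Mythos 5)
Your argument is correct, and it is a genuinely different (and more rigorous) route than the one sketched in the notes. The paper argues heuristically at the first touching point: just before time $t_0$ the outer surface must have moved inwards strictly faster than the inner one, giving $H=-V>-\tilde V=\tilde H$ at $(x_0,t_0)$, which contradicts $H\leq \tilde H$ from the tangency. Strictly speaking, first touching only yields the non-strict inequality between the normal velocities, so the sketch leaves a gap exactly at the point where strictness is claimed. Your proof closes this gap in the standard PDE way: writing both boundaries as graphs over the common tangent plane near $(x_0,t_0)$, linearizing the quasilinear graph equation $\p_t f=F(\nabla f,\nabla^2 f)$ along the segment between the two solutions, and applying the parabolic strong maximum principle to the nonnegative difference $h=f_2-f_1$, which vanishes at an interior spatial point at the top time and hence must vanish identically backwards in time --- contradicting the disjointness before $t_0$. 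You also treat the merely nested (not compactly nested) case by an initial perturbation plus continuous dependence, which the paper does not address at all (its sketch assumes $\tilde\Omega(0)\compemb\Omega(0)$). What the paper's version buys is brevity and geometric transparency, which is appropriate for its ``idea of proof''; what your version buys is an actual proof, at the cost of the technical set-up of a uniform graph representation on a parabolic cylinder and the (routine but real) appeal to short-time well-posedness and continuous dependence in the approximation step. Two small points to keep clean: justify that the infimum $t_0$ is attained (the function $t\mapsto\dist(\p\Omega_1(t),\p\Omega_2(t))$ is continuous and the surfaces are compact), and note that $F$ has no zeroth-order dependence, so the strong minimum principle applies directly to the value $0$ without any sign condition on lower-order coefficients.
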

	The idea of proof is quite instructive:	Let's suppose $\Sigma(t) = \p \Omega(t)$ and $\tilde \Sigma(t) = \p \tilde \Omega(t)$ are two closed manifolds smoothly evolving by MCF such that $\Sigma(0)$ encloses $\tilde \Sigma(0)$ in the sense that $\tilde \Omega(0) \subset \subset \Omega(0)$.
	We want to show that this remains true for all future times $t>0$ (at least as long as both evolutions remain smooth). 
	
	For a contradiction, let us assume that $t_0$ is the first time that $\Sigma(t)$ and $\tilde \Sigma(t)$ touch, say at a point $x_0 \in  \Sigma(t_0) \cap \tilde \Sigma(t_0)$ (by compactness, such a point $x_0$ exists). 
	Then at $x_0$, just before time $t_0$, $\Sigma(t)$ must have moved inwards faster than $\tilde \Sigma(t)$. Since both surfaces were supposed to move by MCF, this implies a relation for their mean curvatures: $H = -V > -\tilde V = \tilde H$ at $(x_0,t_0)$, a contradiction to the fact that $\Sigma(t)$ touches $\tilde \Sigma(t)$
	from the outside, which implies $H \leq \tilde H$.
	
	\medskip
	
	Heuristically, this means, given a function $g \colon \R^d \to \R$, we could in principle let each level set $\{ g=s\} = \{x\in \R^d \colon g(x)=s\}$ ($s \in \R$) evolve by MCF and then reconstruct a function $u(x,t)$ from these level sets by the defining property that $\{u(\,\cdot\,,t) =s\} $ is the evolution by MCF of $\{g=s\}$ at time $t$. 
	This formal correspondence between the level sets and the function $u$ motivates us to \emph{define} weak solutions to MCF with initial datum $\Sigma_0$ by first defining a function $g$ such that $\Sigma_0 = \{g=0\}$ and then setting $\Sigma(t) := \{u(\,\cdot\,,t)=0\}$ to be the corresponding level set of $u$. 
	Formally, it is straightforward to see that $u$ solves
	\begin{align}\label{eq:level set pde}
	\p_t u = |\nabla u|  \div \left( \frac{\nabla u}{|\nabla u|}\right) = \Delta u - \frac{\nabla u}{|\nabla u|}\cdot\nabla^2 u \, \frac{\nabla u}{|\nabla u|}.
	\end{align}
	We will make this a bit more precise in the following paragraph and leave the proof (in the nice regular case) as an exercise, see Exercise \ref{exercise level set pde}.
	Note carefully that \eqref{eq:level set pde} is a quasilinear (degenerately parabolic) PDE in \emph{non-divergence} form.
	
	Let us suppose that $u$ is smooth, and more crucially, that $s=0$ is a regular value of $u$ in the sense that $\nabla u(x,t) \neq 0$ on $\Sigma(t)=\{u=0\}$.
	Under these conditions, \eqref{eq:level set pde} holds if and only if
	\begin{align}\label{eq: levelsets by MCF}
	\text{all level sets of $u$ move by MCF.}
	\end{align}
	
	\begin{exercise}\label{exercise level set pde}
		Show (under the above assumptions on $u$) that \eqref{eq:level set pde} is indeed equivalent to \eqref{eq: levelsets by MCF}.
		
		\begin{hint}
			By the implicit function theorem, $\Sigma(t) $ is a smooth hypersurface with normal
			\[
			\nu(x,t):= \frac{\nabla u(x,t)}{|\nabla u(x,t)|}.
			\]
			Since the mean curvature of the level set $\Gamma(t)$ satisfies $H= \div \nu$, any trajectory $x(t)$ on $ \bigcup_{t\geq 0} (\Sigma(t) \times \{t\})$ has to solve the non-autonomous ODE
			\begin{align}\label{eq:ode x on level set}
			\nu(x,t) \cdot \dot x =  -\div \nu (x,t).
			\end{align}
		\end{hint}
	\end{exercise}
	
	\begin{remark}
		Since \eqref{eq: levelsets by MCF} does not depend on the labeling of the level sets, it is clear that the PDE should satisfy the following invariance: If $\Psi \colon \R \to \R$ is smooth and $u$ is a solution of \eqref{eq:level set pde}, then so is $\Psi \circ u$.
	\end{remark}
	
	\begin{exercise}\label{exercise:psi of u}
		Check by a direct computation that $\Psi\circ u$ is indeed a solution of \eqref{eq:level set pde} if $\Psi\colon \R\to \R$ is smooth with $\Psi'>0$ and $u$ is a smooth solution with $\nabla u\neq0$.
	\end{exercise}

	\begin{remark}
		The theory of viscosity solutions allows to make these ideas rigorous, also in the degenerate case when $\nabla u=0$ somewhere. 
		We refer to Evans--Spruck \cite{EvansSpruckI} and Chen--Giga--Goto \cite{ChenGigaGoto} for more details. 
		In fact, \eqref{eq:level set pde} has a unique viscosity solution. Furthermore, Evans and fSpruck \cite{EvansSpruckIV} showed that almost every level set of $u$ is in fact a unit-density Brakke-flow.
		In fact, one can even show that almost every level set is a distributional solution in the sense of the next section.
	\end{remark}
	
	\subsection{Non-uniqueness and fattening}
	The viscosity solution is unique, however, it may not give us all the information we would like to have as the following example shows.
	
	\begin{example}[Symmetric cross]
		Let 
		\begin{align}
		\Omega_0 := \{x \in \R^2 \colon x_1x_2 >0\}
		\end{align}
		be the union of the first and third quadrants, so that 
		\begin{align}
		\Sigma_0 = \p \Omega_0 = \{x\in \R^2 \colon x_1x_2=0\}
		\end{align}
		is simply given by the union of the two coordinate axes.
		Then the viscosity solution ``fattens'' in the sense that
		\begin{align}
		\{x\in \R^d \colon u(x,t)=0\} \text{ has non-empty interior for all } t>0.
		\end{align}
	\end{example}
	
	\subsection{Numerical schemes}\label{sec:numerical schemes}
	There are plenty of ways to compute solutions to MCF. The simplest and most direct way is to directly discretize the evolution equation for the embeddings $X(u,t)$.
	However, this breaks down when topological changes occur and is much more complicated in the multiphase setting (to simulate more than two grains). 
	To handle topological changes in two-phase systems, one can use the levelset method \cite{OsherSethian}. 
	Other popular schemes to handle topological changes and multiphase systems are Monte Carlo methods which are probabilistic in nature; phase-field methods; and the thresholding scheme \cite{MBO}.
	
	The idea of phase-field methods is in some sense also of physical nature: in many phase-field models like the Allen--Cahn equation
	\begin{align}\label{eq:AC}
		\partial_t u_\eps = \Delta u_\eps + \frac1{\eps^2}u_\eps (1-u_\eps^2),
	\end{align}
	it is observed after a very fast transition that a diffuse interface of approximate thickness $\eps\ll 1$ forms, which moves according to MCF, see Figures \ref{fig:allencahnfast} and \ref{fig:allencahnslow}.
	This asymptotic behavior can be explained easily by matched formal asymptotic expansion, and there are many rigorous convergence proofs in the literature. 
	In the computational phase-field community, this phenomenon is often described in a reciprocal fashion: computing solutions to \eqref{eq:AC} gives a good (numerical) approximation to mean curvature flow. 
	This is particularly interesting in the presence of topological changes, which are not seen by the reaction-diffusion equation \eqref{eq:AC} and therefore appear naturally.
	Numerically, the main drawback is that in order to accurately solve \eqref{eq:AC}, one needs to either use a fine grid or an adaptive grid in order to resolve the fast transition from $-1$ to $1$ in the diffuse transition layer.
	
	\begin{figure}
		\foreach \n in {1, 3, 5, 7, 9} {
			\includegraphics[width=2cm]{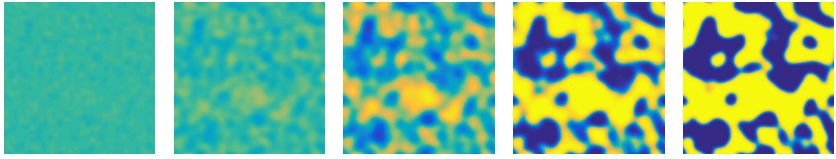}
		}
		\caption{Snapshots of a the solution $u_\eps$ of the Allen--Cahn equation with random initial data and periodic boundary conditions during the first stage forming diffuse interfaces.}
		\label{fig:allencahnfast}
	\end{figure}

	\begin{figure}
		\includegraphics[width=2cm]{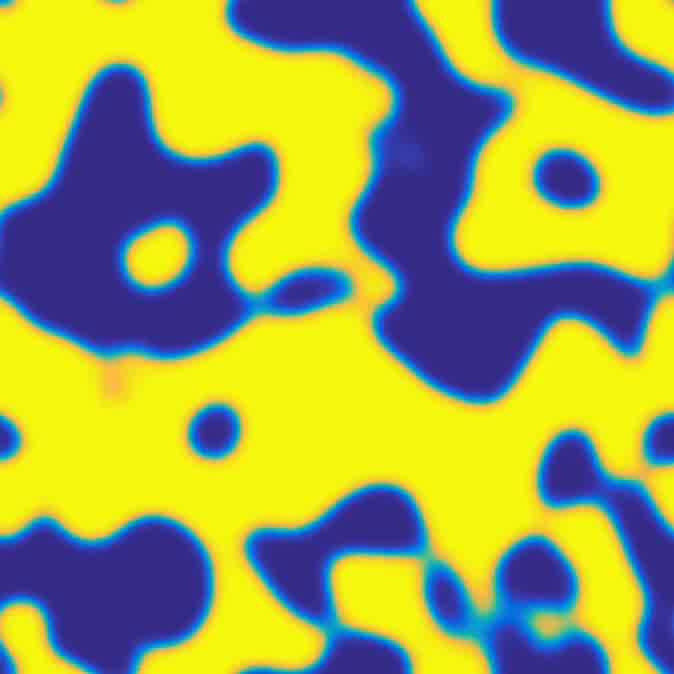}
		\foreach \n in {5,9} {
			\includegraphics[width=2cm]{random00\n0.jpg}
		}%
		\foreach \n in {3,7} {
			\includegraphics[width=2cm]{random01\n0.jpg}
		}%
		\foreach \n in {1,3,7} {
			\includegraphics[width=2cm]{random02\n0.jpg}
		}%
		\foreach \n in {1,3,7,9} {
			\includegraphics[width=2cm]{random03\n0.jpg}
		}%
		\foreach \n in {0,4} {
			\includegraphics[width=2cm]{random04\n0.jpg}
		} %
		\includegraphics[width=2cm]{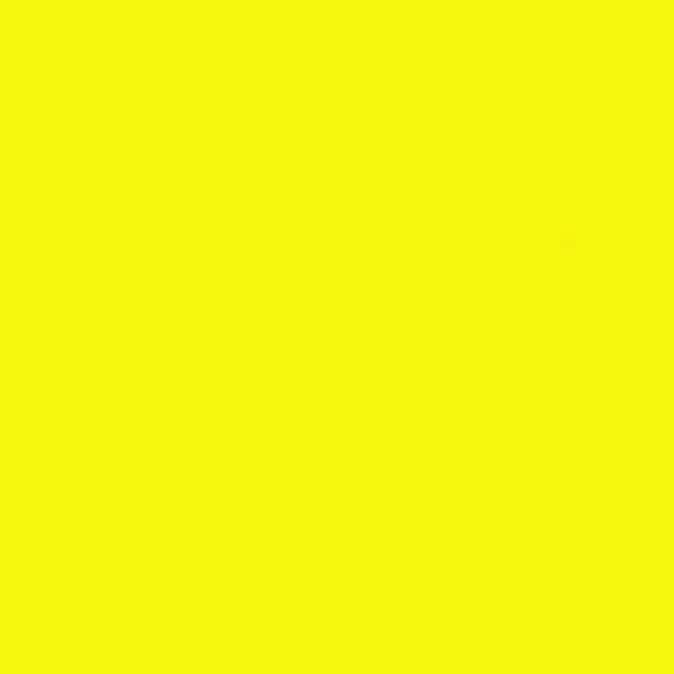}
	
		\caption{Snapshots of a the solution $u_\eps$ of the Allen--Cahn equation (with periodic boundary conditions) after the fast stage in Fig.\ \ref{fig:allencahnfast} on the slower time scale $\sim \eps$. Roughly looks like mean curvature flow (here with periodic boundary conditions).}
		\label{fig:allencahnslow}
	\end{figure}

	The thresholding scheme \cite{MBO}, a highly efficient numerical algorithm, overcomes this drawback in some sense; see Appendix \ref{sec:appendix} for the code of a naive implementation for educational purpose. 
	The idea behind the scheme is intriguingly simple and beautiful: in order to solve the Allen--Cahn equation numerically, apply a standard operator splitting, i.e., given a time-step size $h>0$, alternate between linear diffusion
	\begin{align*}
		\partial_t u^{(1)}_\eps = \Delta u^{(1)}_\eps
	\end{align*}
	for a short time interval $(0,h)$ and pure reaction
	\begin{align*}
		\partial_t u^{(2)}_\eps = \frac1{\eps^2}u^{(2)}_\eps \left(1-(u^{(2)}_\eps)^2\right),
	\end{align*}
	again for a short time interval $(0,h)$.
	The interesting idea now is that Merriman--Bence--Osher \cite{MBO} propose to not solve this ODE but instead \emph{exploit} the smallness $\eps\ll1$ which drives $u_\eps^{(2)}(x,t)$ exponentially fast to the two minima $\pm1$, depending on the sign of $u_\eps^{(2)}(x,0)$ at that particular point $x$ initially. 
	Hence they replace solving this ODE by the simple thresholding step $u\mapsto \sign u$.
	In other words, in each time step, they solve the linear heat diffusion equation and then threshold at the value $0$:
	\begin{align}
		u(\cdot, t+h) = \sign(G_h\ast u(\cdot,t)),
	\end{align}
	where $G_h$ denotes the heat kernel at time $h$. Note that both operations can be implemented efficiently, the convolution by FFT; and the nonlinearity is now (from the implementation point of view) completely trivial.
	Clearly, also this scheme has drawbacks, like pinning for small time-steps, but many of these issues have been overcome in the last decades.

\section{Construction of distributional solutions}
\label{sec:existence}

\subsection{Motivation and definition}

	In this section, we want to define a notion of weak solution to mean curvature flow, originally introduced by Luckhaus--Sturzenhecker \cite{LucStu95} who proved a conditional convergence result for the following implicit time discretization, which has also appeared in \cite{ATW93}:
	
	Given $\Omega_0 \subset \R^d$ and a time-step size $h>0$, for $n=1,2,\ldots$, construct $\Omega_n$ from $\Omega_{n-1}$ by solving
	\begin{align*}
		\mathcal{H}^{d-1} (\p \Omega) + \frac1h \int_{\Omega\Delta \Omega_{n-1}} \dist(x,\p \Omega_{n-1})\dL x.
	\end{align*}

	To motivate the weak solution concept in \cite{LucStu95}, let's first suppose $\Omega(t)$ is a one-parameter family of open sets with smooth boundaries $\Sigma(t)=\p \Omega(t)$, which evolve smoothly by MCF. Then we have
	\begin{align}\label{eq:IBP MC}
		\int_0^T\int_{\Sigma(t)} \left(\nabla \cdot B - \nu\cdot \nabla B\, \nu\right) \dL \H^{d-1} \dL t
	= \int_0^T \int_{\Sigma(t)} H \nu \cdot B  \dL\H^{d-1} \dL t
	\end{align}
	for all test vector fields $B \in C_c^1(\R^d\times(0,T))^d$ and
	\begin{align}\label{eq:velocity smooth}
	\ddt \int_{\Omega(t)}  \zeta \dL x 
	= \int_{\Omega(t)} \p_t\zeta \dL x 
	+ \int_{\Sigma(t)} \zeta V \dL \H^{d-1}
	\end{align}
	for all test functions $\zeta \in C_c^1(\R^d\times[0,T))$. 
	The evolution of the enclosed volume \eqref{eq:velocity smooth} is a straightforward computation. The first equation, \eqref{eq:IBP MC} is a special case ($\p\Sigma=0$) of the well-known integration by parts formula on surfaces. The idea of proof is a combination of Stokes' theorem, which says that the left-hand side applied to the tangential part $B - (B\cdot \nu )\nu$ integrates to zero, and a direct integration by parts for the normal part $(B\cdot \nu ) \nu$.
	
	The two identities \eqref{eq:IBP MC} \& \eqref{eq:velocity smooth} are the basis of the definition  of distributional solutions due to Luckhaus--Sturzenhecker \cite{LucStu95}. 
	Before stating this definition, let us introduce some notation from sets of finite perimeter (or BV functions). We refer the interested reader to \cite{Maggi} for an excellent introduction.

	Here and in the following, we will encode the evolving sets by their characteristic function $\chi(x,t) \in\{0,1\}$. For such a measurable function $\chi \colon [0,1)^d \times (0,T) \to \{0,1\}$, we define 
	\begin{align}
		\int |\nabla \chi| 
		:= \int_{[0,1)^d \times(0,T)} |\nabla \chi| 
		:= \sup_\xi \int_{[0,1)^d\times(0,T)} \chi \left(\nabla \cdot \xi\right) \dL x \dL t ,
	\end{align}
	where the supremum runs over all test vector fields $\xi \in C_c^1([0,1)^d\times(0,T))^d$ with $\sup_{[0,1)^d\times(0,T)} |\xi| \leq 1$. 
	
	If  $\int |\nabla \chi| <\infty$, we say $\chi$ is of \emph{bounded variation} or $\{\chi=1\}$ is a \emph{set of finite perimeter}. 
	Note that $\int |\nabla \chi| <\infty$ is equivalent to the fact that the distributional gradient $\nabla \chi$ is a (vector-valued) Radon measure.
	In that case, one can define the  \emph{measure theoretic outward unit normal} of $\{\chi(\cdot,t)=1\}$ by the Radon-Nikodym derivative $\nu := - \frac{\nabla \chi }{|\nabla \chi|}$.
	Then we have the following generalization of the Gauss-Green formula
	\begin{align}
		\int \chi \left(\nabla \cdot \xi\right) \dL x \dL t = \int \xi \cdot \nu |\nabla \chi| \quad \text{for all } \xi \in C_c^1([0,1)^d\times(0,T))^d.
	\end{align}
	
	The next exercise shows that this formula indeed reduces to the classical Gauss-Green formula if we have sufficient regularity.
	\begin{exercise}
		Suppose $\chi=\chi_\Omega$ for some open set $\Omega \subset [0,1)^d$ with $C^1$ boundary.
		Show that $\nabla \chi= - \nu |\nabla \chi| = -\nu_{\partial \Omega} \mathcal{H}^{d-1}\llcorner \partial \Omega$ and $\int_{[0,1)^d} |\nabla \chi| = \mathcal{H}^{d-1}(\partial \Omega)$.
	\end{exercise}

	One can show even more: If  $\chi\colon [0,1)^d \to \{0,1\}$ with $\int |\nabla \chi| <\infty$, then the measure $|\nabla \chi|$ is concentrated on the \emph{reduced boundary} $\p^* \{\chi=1\} = \{x\colon  \lim_{r\downarrow 0} \frac{\int_{B_r(x)}\nabla \chi}{\int_{B_r(x)}|\nabla \chi|} \text{ exists and is in } \mathbb{S}^{d-1}\}$. Furthermore, De Giorgi's structure theorem (which will not be needed in this course) states that $|\nabla \chi|$ is equal to $\mathcal{H}^{d-1} \llcorner \p^*\{\chi(\cdot,t)=1\}$,  $\p^* \{\chi(\cdot,t)=1\}$ is $\mathcal{H}^{d-1}$-rectifiable, and $\nu$ is indeed the outward unit normal $\mathcal{H}^{d-1}$-a.e., cf.\ \cite[Part 2]{Maggi}.
	
	\begin{definition}[Distributional solution to MCF]
		\label{def:distr sol}
		A measurable function $\chi \colon [0,1)^d\times(0,T) \to \{0,1\}$ is a \emph{distributional solution to MCF} with initial conditions $\chi_0\colon [0,1)^d \to \{0,1\}$  if 
		\begin{align}\label{eq:def distr sol energy}
			\esssup_{t\in (0,T)} \int_{[0,1)^d} |\nabla \chi(\cdot,t)| <\infty
		\end{align}
		and there exists a $|\nabla \chi|$-measurable function $V\colon [0,1)^d\times (0,T)\to \R$ such that
		\begin{align}\label{eq:def distr sol V L2}
			\int V^2 |\nabla \chi| <\infty,
		\end{align}
		which is the normal velocity in the sense that for all test functions $\zeta \in C_c^1([0,1)^d \times [0,T))$ and a.e.\ $T'\in(0,T)$ 
		\begin{align}
		\notag\int_{[0,1)^d} \zeta(\cdot, T')  \chi(\cdot, T') \dL x 
		-&\int_{[0,1)^d} \zeta(\cdot,0) \chi_0 \dL x
		\\\label{eq:def distr sol def V} &= 	\int \chi \,\p_t \zeta \dL x \dL t + \int V \zeta |\nabla \chi|,
		\end{align}
		and such that $V=-H$ in the sense that for all test vector fields $B \in C_c^1(\R^d\times(0,T))^d$ 
		\begin{align}\label{eq:def distr sol V=H}
			\int \left(\nabla \cdot B - \nu\cdot \nabla B\, \nu\right) |\nabla \chi|
			= - \int V \nu \cdot B |\nabla \chi|.
		\end{align}
		
		We say \emph{$\chi$ satisfies the optimal energy-dissipation rate} if in addition for a.e.\ $t\in(0,T)$
		\begin{align}\label{eq:def distr sol EDI}
				\int_{[0,1)^d} |\nabla \chi(\cdot,t)| + \int_{[0,1)^d\times(0,t)} V^2 |\nabla \chi| \leq 	\int_{[0,1)^d} |\nabla \chi_0|. 
		\end{align} 
	\end{definition}

	\begin{remark}
		Note that the initial conditions $\chi(\cdot,0)=\chi_0$ are encoded ``weakly'' in the two statements \eqref{eq:def distr sol def V} and \eqref{eq:def distr sol EDI}. We will see in Section \ref{sec:uniqueness} that this is in a certain sense sufficient.
	\end{remark}

	\begin{exercise}[Consistency]
		Let $\chi(x,t)=\chi_{\Omega(t)}(x)$ be a distributional solution and suppose $\Omega(t)$ is smooth and evolves smoothly. Show that $\p \Omega(t)$ is a classical MCF.
	\end{exercise}

	\begin{exercise}[Approximating normal of finite perimeter sets]
		\label{exercise:approximate normal}
		Suppose $\Omega\subset [0,1)^d$ is a set of finite perimeter and let $\chi := \chi_\Omega$. Show that for each $\eps>0$ there exists $\xi \in C_c^1([0,1)^d)^d$ such that $|\xi|\leq 1$ in $[0,1)^d$ and
		\begin{align}
			 \int |\xi -\nu |^2 |\nabla \chi| < \eps.
		\end{align}
	\end{exercise}

	\subsection{Conditional closure theorem}
	The next statement is the central piece of this section and states a general precompactness result for distributional solutions together with a conditional statement to verify the limiting PDE.
	
	\begin{theorem}[Conditional closure]
		Let $\{\chi_k\}_{k\in \N}$ be  a sequence of distributional solutions to MCF on a common time interval $(0,T)$ in the sense of Definition \ref{def:distr sol} with initial conditions $\chi_{0,k}$ such that 
		\begin{align}
			\chi_{0,k} \to \chi_0 \text{ in }L^1([0,1)^d)\text{ and } \quad \lim_{k\to \infty} \int_{[0,1)^d} |\nabla \chi_{0,k}| =  \int_{[0,1)^d} |\nabla \chi_{0}|
		\end{align}
		and with the uniform bounds
		\begin{align}\label{eq:closure thm uniform energy bound}
			\sup_{k\in \N} \esssup_{t\in (0,T)} \int_{[0,1)^d} |\nabla \chi_k| <\infty
		\end{align}
		and 
		\begin{align}\label{eq:closure thm uniform velocity bound}
			\sup_{k\in \N} \int_{[0,1)^d\times(0,T)} V_k^2 |\nabla \chi_k| <\infty.
		\end{align}
		Then $\{\chi_k\}_{k\in \N}$ is precompact in $L^1([0,1)^d\times(0,T))$, i.e., any sequence contains a subsequence (not relabeled) such that $\chi_k \to \chi$ in $L^1$ for some $\chi \colon[0,1)^d\times (0,T) \to \{0,1\}$.
		
		If in addition the time-integrated energies converge, i.e.,
		\begin{align}\label{eq:closure assumption}
			\lim_{k\to \infty} \int_{[0,1)^d\times(0,T)} |\nabla \chi_k| = \int_{[0,1)^d\times(0,T)} |\nabla \chi|,
		\end{align}
		then the limit $\chi$ is a distributional solution to MCF with initial conditions $\chi_0$.
		
		Finally, if $\chi_k$ satisfies the optimal energy dissipation rate \eqref{eq:def distr sol EDI}, and under the same assumption \eqref{eq:closure assumption}, the limit $\chi$ satisfies the optimal energy dissipation rate as well.
	\end{theorem}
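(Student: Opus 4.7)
For the $L^1$ precompactness, the uniform $\BV$ bound \eqref{eq:closure thm uniform energy bound} gives spatial compactness (since $\BV\hookrightarrow L^1$ compactly), while time regularity comes from \eqref{eq:def distr sol def V}: for any $\zeta\in C^1_c([0,1)^d)$ and $0\leq s<t\leq T$, Cauchy--Schwarz together with \eqref{eq:closure thm uniform velocity bound} yields
\begin{align*}
\left|\int \zeta\bigl(\chi_k(\cdot,t)-\chi_k(\cdot,s)\bigr)\dL x\right|
\leq \|\zeta\|_\infty \sqrt{(t-s)\,E_V},
\end{align*}
where $E_V$ denotes the bound in \eqref{eq:closure thm uniform velocity bound}. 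This $\tfrac12$-H\"older equicontinuity with values in a negative Sobolev space, combined with the $\BV_x$ control, delivers $L^1([0,1)^d\times(0,T))$ compactness by a standard Aubin--Lions argument.

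To recover the PDE, note that $\chi_k\to\chi$ in $L^1$ implies $\nabla\chi_k\rightharpoonup^*\nabla\chi$ as $\R^d$-valued Radon measures, and \eqref{eq:closure assumption} upgrades the usual lower semicontinuity to equality of total variations. Together these give \emph{strict convergence} of $\BV$ measures, so Reshetnyak's continuity theorem yields
\begin{align*}
\int f(x,t,\nu_k)\,|\nabla\chi_k|\longrightarrow \int f(x,t,\nu)\,|\nabla\chi|
\end{align*}
for every continuous $f$ on $[0,1)^d\times(0,T)\times\sphere^{d-1}$. Applied to $f(x,t,p)=\nabla\cdot B(x,t)-p\cdot\nabla B(x,t)\,p$, this handles the left-hand side of \eqref{eq:def distr sol V=H}. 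The velocity is extracted by passing \eqref{eq:def distr sol def V} to the limit: $L^1$ convergence of $\chi_k$ (with a.e.\ slice in $T'$) makes all terms except $\int V_k\zeta|\nabla\chi_k|$ converge, so this last term admits a limit $\Lambda(\zeta)$. Cauchy--Schwarz and strict convergence applied to the continuous test $\zeta^2$ give
\begin{align*}
|\Lambda(\zeta)|\leq \sqrt{E_V}\left(\int \zeta^2\,|\nabla\chi|\right)^{1/2},
\end{align*}
so $\Lambda$ extends by density of $C^1_c$ in $L^2(|\nabla\chi|)$ to a bounded linear functional, yielding $V\in L^2(|\nabla\chi|)$ with $\Lambda(\zeta)=\int V\zeta\,|\nabla\chi|$. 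This gives \eqref{eq:def distr sol def V} in the limit; the right-hand side of \eqref{eq:def distr sol V=H} passes to the limit using $V_k|\nabla\chi_k|\rightharpoonup V|\nabla\chi|$ paired against the continuous test $B\cdot\nu$ (approximable in $L^2(|\nabla\chi|)$ by Exercise~\ref{exercise:approximate normal}).

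For the optimal EDI, I use the Legendre identity
\begin{align*}
\int V^2\,|\nabla\chi|=\sup_{\phi\in C_c}\left\{2\int V\phi\,|\nabla\chi|-\int \phi^2\,|\nabla\chi|\right\}.
\end{align*}
For each fixed $\phi$ the first integral on the right equals $\lim_k\int V_k\phi\,|\nabla\chi_k|$ by the velocity convergence, and the second equals $\lim_k\int\phi^2\,|\nabla\chi_k|$ by strict convergence. Since $2V_k\phi-\phi^2\leq V_k^2$ pointwise, we conclude $\int V^2|\nabla\chi|\leq \liminf_k\int V_k^2|\nabla\chi_k|$. Combined with $\BV$ lower semicontinuity of $t\mapsto\int|\nabla\chi(\cdot,t)|$ at a.e.\ $t$ and the assumed convergence $\int|\nabla\chi_{0,k}|\to\int|\nabla\chi_0|$, the EDI \eqref{eq:def distr sol EDI} passes to the limit by extracting a joint subsequence along which both liminfs are simultaneously attained. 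The central obstacle throughout is the role of \eqref{eq:closure assumption}: without it the limit measure could lose mass and the quadratic-in-$\nu$ terms in \eqref{eq:def distr sol V=H} could not be controlled. This is the well-known ``Luckhaus--Sturzenhecker assumption,'' which cannot typically be verified unconditionally and is the source of much recent research.
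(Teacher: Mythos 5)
Your overall route is the same as the paper's: Riesz--Kolmogorov/Aubin--Lions-type compactness from the uniform $\BV_x$ bound plus the $\tfrac12$-H\"older-in-time estimate coming from \eqref{eq:def distr sol def V} and Cauchy--Schwarz (note your time estimate should also carry the energy bound \eqref{eq:closure thm uniform energy bound} as a factor, but that is only a constant); strict convergence of $\nabla\chi_k$ from \eqref{eq:closure assumption} and Reshetnyak continuity for the curvature term (the paper reproves Reshetnyak by hand via its tilt-excess Lemma \ref{lemma:excess convergence}, but citing it is legitimate); and extraction of $V$ as a bounded linear functional on $L^2(|\nabla\chi|)$ via Riesz representation, with the Legendre-duality lower semicontinuity for the dissipation term, which is also how the paper passes \eqref{eq:def distr sol EDI} to the limit.

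The one step that does not work as written is your treatment of the velocity term on the right-hand side of \eqref{eq:def distr sol V=H}. You pair the weak-$\ast$ convergence $V_k|\nabla\chi_k|\rightharpoonup V|\nabla\chi|$ against ``the continuous test $B\cdot\nu$,'' but $\nu$ is only a $|\nabla\chi|$-measurable vector field, not continuous, and moreover the integrand on the $k$-th level involves $\nu_k$, which varies with $k$; so this is a genuine weak-times-weak product and cannot be handled by weak convergence against a fixed test function. The parenthetical appeal to Exercise \ref{exercise:approximate normal} only gives a smooth $\xi$ close to $\nu$ in $L^2(|\nabla\chi|)$; to control the error $\int V_k(\nu_k-\xi)\cdot B\,|\nabla\chi_k|$ by Cauchy--Schwarz and \eqref{eq:closure thm uniform velocity bound} you additionally need $\int|\nu_k-\xi|^2|\nabla\chi_k|$ to be small \emph{uniformly for large $k$}. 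This is exactly where strict convergence enters quantitatively: expanding $|\nu_k-\xi|^2=1+|\xi|^2-2\xi\cdot\nu_k$ and integrating by parts, the excess $\int|\nu_k-\xi|^2|\nabla\chi_k|$ converges to $\int|\nu-\xi|^2|\nabla\chi|$ (the paper's Lemma \ref{lemma:excess convergence}), so it is eventually $<\eps$. For the remaining smooth piece one tests \eqref{eq:def distr sol def V} with $\zeta=\xi\cdot B$, so that $\int V_k\,\xi\cdot B\,|\nabla\chi_k|=-\int\chi_k\,\p_t(\xi\cdot B)$ converges by the $L^1$ convergence of $\chi_k$. With this insertion-of-$\xi$ argument spelled out (which is the paper's Step 3), your proof is complete; without it, the key nonlinear passage to the limit is unjustified.
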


	\begin{proof}
		The proof is divided into four steps. In the first step, we show that under the a priori bounds at hand, we can apply the Riesz-Kolmogorov theorem to obtain compactness. In the second step, we construct the normal velocity and derive optimal bounds for it. In the last two steps, we pass to the limit in the nonlinear terms of the PDE \eqref{eq:def distr sol V=H}. 
		For the sake of simplicity, let us neglect all issues concerning the initial conditions. 
		
		\emph{Step 1: Compactness.} 
			First, we prove the precompactness of $\{\chi_k\}_{k\in \N}$ in $L^1$. 
			By \eqref{eq:closure thm uniform energy bound}, there exists a constant $C<\infty$ independent of $k$ such that
			\begin{align}
			 	\notag\esssup_{t\in(0,T)} &\int_{[0,1)^d} |\chi_k(x+z,t)-\chi_k(x,t)| \dL x 
			 	\\&\label{eq:closure pf compact x} \leq |z|  \esssup_{t\in(0,T)} \int_{[0,1)^d} |\nabla \chi_k|  \leq C|z| \to0\quad \text{as }|z|\to 0.
			\end{align}
			Furthermore, by definition of $V_k$ (i.e., \eqref{eq:def distr sol def V} with $\chi_k$ and $V_k$) and using Cauchy-Schwarz, for any test function $\zeta \in C_c^1([0,1)^d\times(0,T))$, we have
			\begin{align}\label{eq:closure first estimate}
				 \int\zeta \, \p_t \chi_k
				=  \int V_k\zeta  |\nabla \chi_k|
				\leq \left(\int V_k^2 \left|\nabla \chi_k\right|\right)^\frac12 
				\left( \int \zeta^2 \left|\nabla \chi_k\right| \right)^\frac12.
			\end{align}
			Now, for $0<t_1<t_2<T$ and $\zeta \in C_c^1([0,1)^d\times(t_1,t_2))$, we pull out $\sup |\zeta|$ and apply the uniform bounds \eqref{eq:closure thm uniform velocity bound} and \eqref{eq:closure thm uniform energy bound}, to obtain 
			$
			\int \zeta\, \p_t \chi_k \leq C \sup |\zeta| (t_2-t_2)^\frac12
			$
			for some $C<\infty$ independent of $k$ and $\zeta$. Replacing $\zeta$ by $-\zeta$ and taking the supremum over all $\zeta$, we obtain
			\begin{align*}
				\int_{[0,1)^d\times(t_1,t_2)} |\p_t \chi_k| \leq C (t_2-t_2)^\frac12.
			\end{align*}
			Hence we obtain
			\begin{align}\label{eq:closure pf compact t}
				\int_{[0,1)^d}  \left|\chi_k(x,t_2)-\chi_k(x,t_1)\right| \dL x \leq C (t_2-t_2)^\frac12
			\end{align}
			for a.e.\ $0<t_1<t_2<T$.
			
			Therefore, the precompactness follows from \eqref{eq:closure pf compact x} and \eqref{eq:closure pf compact t}, and the Riesz-Kolmogorov compactness theorem.
		
		\emph{Step 2: Construction of velocity.} 
			In this key step, we construct the velocity $V$ of the limit $\chi$ under the assumption \eqref{eq:closure assumption} and show that it satisfies the integrability \eqref{eq:def distr sol V L2} and the defining equation \eqref{eq:def distr sol def V}.
			
			Let us first post-process this assumption: by lower semi-continuity of $\int \zeta |\nabla \chi_k|$ for $\zeta \in C_c([0,1)^d \times(0,T))$ with $\zeta \geq0$, it is easy to see that the assumption \eqref{eq:closure assumption} implies
			\begin{align}\label{eq:conv densities}
				|\nabla \chi_k| \stackrel{\ast}{\rightharpoonup} |\nabla \chi| \quad \text{as Radon measures}.
			\end{align}
			
			We now claim that the measure $\p_t \chi$ satisfies
			\begin{align}\label{eq:closure dt ll dx}
				\p_t \chi \ll |\nabla \chi|.
			\end{align}
			
			We first note that for any $\zeta\in C_c([0,1)^d\times(0,T))$, using \eqref{eq:conv densities} and \eqref{eq:closure thm uniform velocity bound} we may pass to the limit $k\uparrow \infty$ in \eqref{eq:closure first estimate}, and then use $\zeta \mapsto -\zeta$ to get
			\begin{align}\label{eq:closure second estimate}
				\left| \int \zeta \p_t \chi  \right|
				\leq  C
				\left( \int \zeta^2 \left|\nabla \chi\right| \right)^\frac12.
			\end{align}		
			To prove \eqref{eq:closure dt ll dx}, let $\eps>0$ and let $U\subset [0,1)^d \times(0,T)$ be open s.t.\ $\int_{U} |\nabla \chi|<\eps$. 
			Let $\zeta \in C_c^1(U)$ with $|\zeta|\leq 1$ in \eqref{eq:closure second estimate}
			\begin{align*}
				\left| \int \zeta \p_t \chi \right| \leq C \left( \int_U |\nabla \chi| \right)^\frac12 \leq C \eps^\frac12.
			\end{align*}
			Hence $\int_U |\p_t \chi| \leq C \eps^\frac12$ and we obtain \eqref{eq:closure dt ll dx}.
			Therefore, by Radon-Nikodym, there exists a density $V = \frac{\p_t \chi}{|\nabla \chi|} \in L^1([0,1)^d \times(0,T),|\nabla \chi|)$, which by definition satisfies
			\begin{align}
				\int \chi \, \p_t \zeta \dL x\dL t = - \int V\zeta |\nabla \chi| 
			\end{align}
			for all $\zeta \in C_c^1([0,1)^d\times(0,T))$.
			
			To prove the optimal integrability for $V$, we note that \eqref{eq:closure second estimate} can now be written as
			\begin{align*}
				\left| \int \zeta V |\nabla  \chi|  \right| \leq C
				\left( \int \zeta^2 \left|\nabla \chi\right| \right)^\frac12,
			\end{align*}
			so that $V\in L^2$ follows directly from the Riesz representation theorem in $L^2$ (or equivalently take $\zeta \to V$).
			In fact, we could have taken the $\liminf_k$ of the first right-hand side term instead of the $\sup_l$ proof to show the standard lower semi-continuity
			\begin{align*}
				\int V^2 |\nabla \chi| \leq \liminf_{k\to \infty} V_k^2 |\nabla\chi_k|,
			\end{align*}
			which allows us to pass to the limit in \eqref{eq:def distr sol EDI} and show that if each $\chi_k$ satisfies the optimal energy-dissipation rate, then so does $\chi$.
		
		\emph{Step 3: Velocity-term in PDE}
			In order to verify that the limit is a distributional solution, we wish to pass to the limit on both sides of \eqref{eq:def distr sol V=H} separately. 
			Let us first consider the more interesting term involving the velocity. We aim to show that for any fixed $B\in C_c^1([0,1)^d\times(0,T))$
			\begin{align}\label{eq:closure velocity term}
				\lim_{k\to \infty} \int V_k \nu_k\cdot B |\nabla \chi_k| =\int V \nu \cdot B |\nabla \chi|.
			\end{align}
			The structure of this term is weak times weak convergence. However, Lemma \ref{lemma:excess convergence} below shows that the normal $\nu_k$ in fact converges ``strongly'' in some sense.
			
			To exploit this, let $\xi$ be given as in Exercise \ref{exercise:approximate normal}, so that
			\begin{align}
				\mathcal{E} =  \int |\xi -\nu|^2 |\nabla \chi| < \eps.
			\end{align}
			Hence by Lemma \ref{lemma:excess convergence}, for $k$ sufficiently small, it also holds
			\begin{align}
				\mathcal{E}_k =  \int |\xi -\nu_k|^2 |\nabla \chi_k| < \eps.
			\end{align}
			To show \eqref{eq:closure velocity term}, let $\xi \in C_c^1([0,1)^d \times (0,T))^d$ be given. 
			We use the triangle inequality to smuggle in $\xi$ at the expense of $\mathcal{E}$ and $\mathcal{E}_k$ and use $\zeta = \xi \cdot B$ as a test function in \eqref{eq:def distr sol def V} to obtain
			\begin{align*}
				&\left| \int V_k \nu_k\cdot B |\nabla \chi_k| -\int V \nu \cdot B |\nabla \chi|\right| 
				\\&\leq 	\left| \int V_k \xi\cdot B |\nabla \chi_k| -\int V \xi \cdot B |\nabla \chi|\right| 
				\\&\quad \quad +	\left| \int V_k (\nu_k-\xi) \cdot B |\nabla \chi_k| \right|  + \left| \int V (\nu-\xi) \cdot B |\nabla \chi|\right| 
				\\&\leq \left| \int -(\chi_k-\chi)\p_t(\xi\cdot B)\right| 
				\\&\quad\quad+ \|B\|_\infty\left(\int V_k^2 |\nabla \chi_k|\right)^\frac12 \mathcal{E}_k^\frac12 
				+\|B\|_\infty  \left(\int V^2 |\nabla \chi|\right)^\frac12 \mathcal{E}^\frac12.
			\end{align*}
			Since $\chi_k \to \chi $ in $L^1$ as $k\to \infty$ and by the uniform estimates on the velocities \eqref{eq:def distr sol V L2} and \eqref{eq:closure thm uniform velocity bound}, we get
			\begin{align}
				\limsup_{k\to \infty}\left| \int V_k \nu_k\cdot B |\nabla \chi_k| -\int V \nu \cdot B |\nabla \chi|\right| 
				\leq C  \mathcal{E}^\frac12 < C \eps^\frac12.
			\end{align}
			Since the left-hand side does not depend on $\eps$, this shows \eqref{eq:closure velocity term}.
		
		\emph{Step 4: Curvature-term in PDE.} 
		Finally, we claim that the curvature operators converge in the sense that for all $B \in C_c^1([0,1)^d\times(0,T))$ 
		\begin{align}\label{eq:reshetnyak}
			\lim_{k\to \infty} \int \left(\nabla \cdot B - \nu_k\cdot \nabla B\, \nu_k\right) |\nabla \chi_k| 
			= \int \left(\nabla \cdot B - \nu\cdot \nabla B\, \nu\right) |\nabla \chi|.
		\end{align}
		Then the proof is complete as we can now pass to the limit on each side of the weak form of the equation.
		
		This continuity \eqref{eq:reshetnyak} is classical and goes by the name of Reshetnyak's continuity theorem \cite{Reshet}. 
		Here, we give a simple alternative proof using the same methods as in the previous step.
		
		First note that $\nabla \cdot B$ is a valid test function in the weak convergence \eqref{eq:conv densities}, so that we only need to argue for the nonlinear term
		\begin{align}\label{eq:reshetnyak 2}
				\lim_{k\to \infty} \int  \nu_k\cdot \nabla B\, \nu_k |\nabla \chi_k| 
			= \int  \nu\cdot \nabla B\, \nu |\nabla \chi|.
		\end{align}
		
		Let $A:=\nabla B$ denote the Jacobian matrix of $B$. To show \eqref{eq:reshetnyak 2}, as in Step 3, let $\eps>0$ be given and choose a vector field $\xi$ such that $\mathcal{E} <\eps$. 
		Then we estimate
		\begin{align*}
			&\left|\int \nu_k\cdot A\, \nu_k |\nabla \chi_k| 
			- \int  \nu\cdot A\, \nu |\nabla \chi| \right|
			\\&\leq \left|\int \xi \cdot A\, \nu_k |\nabla \chi_k| 
			- \int  \xi\cdot A\, \nu |\nabla \chi| \right|
			\\&\quad \quad +\left|\int (\nu_k-\xi)\cdot A\, \nu_k |\nabla \chi_k| \right|
			+\left| \int  (\nu-\xi) \cdot A\, \nu |\nabla \chi| \right|
			\\&\leq  \left|\int (\chi_k-\chi)\nabla \cdot\left( \xi \cdot A\right) \dL x \dL t\right|
			\\&\quad\quad+\|A\|_\infty\left(\int |\nabla \chi_k| \right)^\frac12 \mathcal{E}_k^\frac12 +	\|A\|_\infty\left(\int |\nabla \chi| \right)^\frac12 \mathcal{E}^\frac12.
		\end{align*}
		By the $L^1$ convergence $\chi_k \to \chi$, the uniform bounds \eqref{eq:def distr sol energy} and \eqref{eq:closure thm uniform energy bound}, and Lemma \ref{lemma:excess convergence}, we conclude
		\begin{align*}
			\limsup_{k\to \infty} \left|\int \nu_k\cdot A\, \nu_k |\nabla \chi_k| 
			- \int  \nu\cdot A\, \nu |\nabla \chi| \right| < CT^{\frac12} \mathcal{E}^\frac12 < CT^\frac12 \eps^\frac12,
		\end{align*}
		which yields the desired \eqref{eq:reshetnyak 2} by taking $\eps\downarrow 0$.
	\end{proof}
	
	The following simple lemma shows that under the assumption of energy convergence, the normals converge strongly in the sense that the tilt-excess $\mathcal{E}$ converges.
	\begin{lemma}\label{lemma:excess convergence}
		Suppose $\chi_k, \chi \in\{0,1\}$ are such that $\chi_k \to \chi$ in $L^1([0,1)^d \times (0,T))$ and such that $\lim_{k\to \infty} \int_{[0,1)^d\times(0,T)} |\nabla \chi_k| = \int_{[0,1)^d\times(0,T)} |\nabla \chi|$, let $\xi \in C_c^1([0,1)^d\times(0,T))^d$, and set 
		\begin{align}
			\mathcal{E}_k
			&:=  \int_{[0,1)^d\times(0,T)} |\nu_k - \xi|^2 |\nabla \chi_k|
			\\\mathcal{E} 
			&:=  \int_{[0,1)^d\times(0,T)} |\nu - \xi|^2 |\nabla \chi|.
		\end{align}
		Then $\lim_{k\to \infty} \mathcal{E}_k = \mathcal{E}.$
	\end{lemma}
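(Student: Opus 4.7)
My plan is to expand the squared norm inside the integrals and then analyze the three resulting terms separately. Since $|\nu_k|=1$ holds $|\nabla \chi_k|$-almost everywhere (and similarly for $\nu$), we may write
\begin{align*}
	\mathcal{E}_k = \int (1+|\xi|^2) |\nabla \chi_k| - 2\int \xi\cdot \nu_k\, |\nabla \chi_k|,
\end{align*}
and the analogous identity holds for $\mathcal{E}$. The lemma then reduces to showing that each of the two integrals on the right passes to the limit individually.

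For the linear (cross) term, I would invoke the generalized Gauss--Green formula together with the sign convention $\nabla \chi_k = -\nu_k\,|\nabla \chi_k|$ to rewrite
\begin{align*}
	\int \xi \cdot \nu_k\, |\nabla \chi_k| = \int \chi_k \,(\nabla \cdot \xi)\dL x\dL t,
\end{align*}
where $\nabla \cdot \xi$ denotes the spatial divergence. Since $\xi \in C_c^1$, the function $\nabla \cdot \xi$ is bounded with compact support, so the $L^1$ convergence $\chi_k\to \chi$ immediately gives $\int \chi_k(\nabla\cdot\xi)\to \int \chi(\nabla\cdot\xi)$, and hence $\int \xi\cdot \nu_k\,|\nabla\chi_k|\to \int \xi\cdot \nu\,|\nabla\chi|$.

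For the term $\int (1+|\xi|^2)|\nabla \chi_k|$, I need weak-$\ast$ convergence of the total variation measures $|\nabla \chi_k|$ to $|\nabla \chi|$. This was already observed in Step~2 of the proof of the closure theorem, using lower semicontinuity and the mass-convergence assumption, but let me sketch the reasoning here as well. From $\chi_k\to \chi$ in $L^1$ one has $\nabla\chi_k\to \nabla \chi$ in the sense of distributions, and combining the uniform $BV$-bound with the hypothesis $\int |\nabla \chi_k|\to \int |\nabla \chi|$ forces $|\nabla \chi_k|\stackrel{\ast}{\rightharpoonup} |\nabla \chi|$ as Radon measures on the bounded domain $[0,1)^d\times (0,T)$; any other candidate weak-$\ast$ limit would dominate $|\nabla\chi|$ strictly and thus violate mass convergence. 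Since the total masses converge, this weak-$\ast$ convergence upgrades to testing against all bounded continuous functions, and in particular $1+|\xi|^2$ is an admissible test function. Hence the quadratic-in-$\xi$ term passes to the limit as well.

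Combining the two convergences yields $\mathcal{E}_k \to \mathcal{E}$. The only delicate point, and the one I would take most care with, is the upgrade from weak-$\ast$ convergence on $C_c$ to convergence on bounded continuous functions, which is where the assumption of equality of masses in \eqref{eq:closure assumption} is genuinely used. Everything else is linear algebra and dominated convergence.
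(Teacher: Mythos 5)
Your proposal is correct and follows essentially the same route as the paper: expand $|\nu_k-\xi|^2=1+|\xi|^2-2\,\xi\cdot\nu_k$, convert the cross term via the generalized Gauss--Green formula into $\int \chi_k(\nabla\cdot\xi)\dL x\dL t$ and use the $L^1$-convergence, and handle the remaining term via the weak-$\ast$ convergence $|\nabla\chi_k|\stackrel{\ast}{\rightharpoonup}|\nabla\chi|$ implied by the energy convergence. Your extra care with the constant-$1$ part of $1+|\xi|^2$ (which is not compactly supported, so the full mass convergence, not merely weak-$\ast$ convergence against $C_c$ test functions, is what lets it pass to the limit) is a correct and welcome refinement of the paper's one-line appeal to \eqref{eq:conv densities}.
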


	\begin{proof}
		Expand the square $|\nu_k - \xi|^2 = 1 +|\xi|^2 - 2 \xi \cdot \nu_k$, integrate against $|\nabla \chi_k|$, and integrate by parts the last term
		\begin{align}
			\mathcal{E}_k = \int (1+|\xi|^2) |\nabla \chi_k| - 2 \int \chi_k(\nabla \cdot \xi) \dL x \dL t.
		\end{align}
		The two integrals converge to the expected limits due to \eqref{eq:conv densities} and the $L^1$-convergence $\chi_k\to\chi$. 
		Applying the above calculation in reverse order for the limit yields the claim.
	\end{proof}

	\begin{remark}
		It is curious that the energy convergence \eqref{eq:closure assumption} can be derived in the level-set framework, cf.\ \cite{EvansSpruckIV}. The proof is based on a quite fascinating estimate on the mean curvature the form
		\begin{align}
			\sup_{t>0}\int_{[0,1)^d} |H| \left|\nabla u\right|\dL x <\infty,
		\end{align}
		which does not correspond to a geometric levelset-by-levelset estimate. 
		This estimate allows to use the div-curl lemma in a ``vanishing viscosity''-type approximation of the levelset PDE \cite{EvansSpruckIV} and which shows that a.e.\ levelset of the viscosity solution is a unit-density Brakke flow. 
		
		Using similar techniques, one can show that a.e.\ levelset is in fact also a distributional solution in our sense.
	\end{remark}

%
%

	\section{Uniqueness}
	\label{sec:uniqueness}
	\subsection{Motivation and Soner's approach}
		In the two-phase case, the comparison principle for the viscosity solution allows to prove the uniqueness of this weak solution concept in the absence of fattening. In particular, as long as the classical solution exists, it agrees with the viscosity solution. 
		Our goal here is to show another method of proving such ``weak-strong uniqueness'' results, which does not rely on the comparison principle and can indeed be generalized to multiphase systems \cite{FHLS2D}.
		
		Before introducing the new approach, let's first consider the very successful approach of Soner \cite{Soner}, see also Jerrad--Soner\cite{JerSon} and Lin \cite{Lin} for the case of higher codimension.
		Here, we will illuminate this approach in the flavor of a uniqueness question. This simplifies the argument to considering two MCFs instead of the more complicated (and more interesting!) question in \cite{Soner} of the convergence of diffuse interface models to MCF.  
		The general drawback of this approach is that it only yields an upper bound and therefore only an inclusion (and not uniqueness) principle: the weak solution has to be a subset of the strong solution. 
		Furthermore, it is not clear how to generalize this approach to the multiphase case. 
		Both of these drawbacks can be overcome by the new method, at the cost of a slightly more complicated error functional as we will see later.
				
		\begin{theorem}[Inclusion principle]\label{thm inclusion Soner}
			Let $(\Sigma^*(t))_{t\in[0,T]}$ be a smooth classical MCF and let $\Sigma(t)$ be a one-parameter family of surfaces satisfying Brakke's inequality:
			\begin{align}\label{eq:brakke ineq}
					\ddt \int_{\Sigma(t)} \varphi \dL S 
					\leq \int_{\Sigma(t)} \big( -\varphi H^2 - H\nu \cdot \nabla \varphi + \p_t \varphi \big) \dL S
			\end{align}
			for all test functions $\varphi \geq 0$, $\varphi \in C^1([0,1)^d\times[0,T)$. 
			If $\mathcal{H}^{d-1}(\Sigma(0)\setminus \Sigma^*(0))=0$, then $\mathcal{H}^{d-1}(\Sigma(t)\setminus \Sigma^*(t))=0$ for a.e.\ $t\in (0,T)$.
		\end{theorem}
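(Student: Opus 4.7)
The plan is to follow Soner's idea of testing Brakke's inequality against the squared signed distance function to the classical flow. Let $s^*(\cdot,t)$ denote the signed distance function of $\Sigma^*(t)$, which by Lemma \ref{lemma:dist transport equation} is smooth in a tubular space-time neighborhood $\mathcal{U}_\delta$. I would set $\phi(x,t):=\frac12(s^*(x,t))^2$ inside $\mathcal{U}_{\delta/2}$ and smoothly extend it to a positive constant outside $\mathcal{U}_\delta$, so that $\phi\in C^1(\R^d\times[0,T])$ is non-negative and $\{\phi(\cdot,t)=0\}=\Sigma^*(t)$. The initial hypothesis $\H^{d-1}(\Sigma(0)\setminus\Sigma^*(0))=0$ then translates into $\int_{\Sigma(0)}\phi\dL\H^{d-1}=0$, and the task reduces to showing that this quantity remains zero in time.

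Plugging $\varphi=\phi$ into \eqref{eq:brakke ineq} and using \eqref{eq:dist transport MCF squared} to rewrite $\p_t\phi=\Delta\phi-1+O(\phi)$ in the tubular neighborhood, I would decompose the Euclidean Laplacian along $\Sigma(t)$ as
\begin{align*}
\Delta\phi=\Delta_{\Sigma(t)}\phi+H\,\nu\cdot\nabla\phi+\nabla^2\phi(\nu,\nu).
\end{align*}
The cross term $H\,\nu\cdot\nabla\phi$ exactly cancels the one already present in Brakke's inequality, and the tangential Laplacian integrates to zero over the closed surface $\Sigma(t)$. What remains is
\begin{align*}
\ddt\int_{\Sigma(t)}\phi\dL\H^{d-1}\leq\int_{\Sigma(t)}\left(-\phi H^2+\nabla^2\phi(\nu,\nu)-1+O(\phi)\right)\dL\H^{d-1}.
\end{align*}

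The key algebraic step is to show $\nabla^2\phi(\nu,\nu)-1\leq 0$ on the tubular neighborhood. From $\nabla\phi=s^*\nabla s^*$ and $|\nabla s^*|=1$, one has $\nabla^2\phi=s^*\nabla^2 s^*+\nabla s^*\otimes\nabla s^*$, so
\begin{align*}
\nabla^2\phi(\nu,\nu)-1=s^*\nabla^2 s^*(\nu,\nu)-\left(1-(\nu\cdot\nabla s^*)^2\right).
\end{align*}
Differentiating $|\nabla s^*|^2=1$ forces $\nabla^2 s^*(\nabla s^*,\cdot)=0$; decomposing $\nu$ into its component along $\nabla s^*$ and a perpendicular remainder $\tau$ with $|\tau|^2=1-(\nu\cdot\nabla s^*)^2$ gives $\nabla^2 s^*(\nu,\nu)=\nabla^2 s^*(\tau,\tau)$ and therefore
\begin{align*}
\nabla^2\phi(\nu,\nu)-1\leq\left(\|\nabla^2 s^*\|_\infty |s^*|-1\right)|\tau|^2\leq 0
\end{align*}
after shrinking $\delta$ if necessary. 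Absorbing the contributions from the cutoff region (where $\phi$ is bounded below by a positive constant, so all smooth quantities are $\leq C\phi$) into $O(\phi)$, I close the Gronwall estimate $\ddt\int_{\Sigma(t)}\phi\dL\H^{d-1}\leq C\int_{\Sigma(t)}\phi\dL\H^{d-1}$, which together with the vanishing initial value yields $\int_{\Sigma(t)}\phi\dL\H^{d-1}\equiv 0$ and hence the desired inclusion.

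The main obstacle, and the reason this argument only produces an \emph{inclusion} and not equality, is the low regularity of the weak evolution $\Sigma(t)$: Brakke's inequality is stated at the varifold level where $H$ is only a generalized mean curvature, so the pointwise decomposition of $\Delta\phi$ and the surface divergence theorem have to be reinterpreted through Brakke's first variation identity. In practice one sidesteps these pointwise manipulations by exploiting the identity $|\nabla\phi|^2=2\phi$ in the tubular neighborhood together with Cauchy--Schwarz,
\begin{align*}
-H\,\nu\cdot\nabla\phi\leq \tfrac12\phi H^2+\tfrac{(\nu\cdot\nabla\phi)^2}{2\phi}\leq \tfrac12\phi H^2+1,
\end{align*}
which absorbs the cross term directly at the integral level, at the cost of additional error terms that must be handled through a careful localization argument.
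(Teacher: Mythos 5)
Your main argument is correct and essentially reproduces the paper's proof: you test Brakke's inequality with a truncated version of half the squared distance to $\Sigma^*(t)$, use \eqref{eq:dist transport MCF squared}, bound the normal Hessian term by $1$ (which you, unlike the paper, verify explicitly via $\nabla^2\phi=s^*\nabla^2 s^*+\nabla s^*\otimes\nabla s^*$ and $\nabla^2 s^*\,\nabla s^*=0$), absorb the cutoff region by the trivial far-field bound, and conclude with Gronwall --- and your decomposition $\Delta\phi=\Delta_{\Sigma(t)}\phi+H\,\nu\cdot\nabla\phi+\nu\cdot\nabla^2\phi\,\nu$ together with $\int_{\Sigma(t)}\Delta_{\Sigma(t)}\phi\dL S=0$ is exactly the paper's integration by parts, i.e.\ \eqref{eq:IBP MC} applied to $B=\nabla\phi$. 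The only flawed point is your closing aside: the bound $-H\nu\cdot\nabla\phi\leq\tfrac12\phi H^2+\tfrac{(\nu\cdot\nabla\phi)^2}{2\phi}\leq\tfrac12\phi H^2+1$ throws away precisely the $-1$ that $\Delta\phi$ produces, and since $\p_t\phi$ is only $O(|s^*|)$ near $\Sigma^*(t)$ (not $\leq-1+C\phi$) the Gronwall estimate would not close along that route; in the low-regularity setting one instead keeps the integration by parts, which survives at the varifold level through the first variation identity, as the paper's remark after the proof indicates.
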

		
		As a direct consequence there is only one smooth classical solution.
		
		\begin{corollary}[Uniqueness of classical solutions]
			If $(\Sigma^*(t))_{t\in[0,T^*)}$ and $(\Sigma(t))_{t\in[0,T)}$ are two smooth classical MCFs with $\Sigma(0) = \Sigma^*(0)$, then $\Sigma(t) = \Sigma^*(t)$ for all $t\in [0,\min \{T, T^\ast\})$.
		\end{corollary}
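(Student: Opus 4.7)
The plan is to play the two smooth classical flows against each other in Theorem~\ref{thm inclusion Soner}. The key observation is that a smooth classical MCF satisfies Brakke's identity (this is the second equality derived in Exercise~\ref{exercise:brakke} applied with $V=-H$), and this identity is in particular a valid Brakke inequality for any nonnegative test function $\varphi$. Hence both $\Sigma$ and $\Sigma^*$ are admissible choices for the ``weak'' flow in Theorem~\ref{thm inclusion Soner} on the common time interval $[0,\min\{T,T^*\})$.

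First, I would invoke Theorem~\ref{thm inclusion Soner} once with $\Sigma^*$ as the smooth strong flow and $\Sigma$ as the Brakke flow. Since $\Sigma(0)=\Sigma^*(0)$, the hypothesis $\mathcal{H}^{d-1}(\Sigma(0)\setminus \Sigma^*(0))=0$ is trivially satisfied, so the conclusion gives
\[
\mathcal{H}^{d-1}\bigl(\Sigma(t)\setminus \Sigma^*(t)\bigr)=0 \quad \text{for a.e.\ }t\in[0,\min\{T,T^*\}).
\]
Then I would apply the theorem a second time with the roles of $\Sigma$ and $\Sigma^*$ exchanged—this is legitimate precisely because both are smooth classical, so the Brakke identity holds for $\Sigma^*$ as well. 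This yields the reverse inclusion $\mathcal{H}^{d-1}(\Sigma^*(t)\setminus \Sigma(t))=0$ for a.e.\ $t$. Combining these two inclusions shows that the symmetric difference has vanishing $(d-1)$-dimensional Hausdorff measure for a.e.\ $t$.

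The last step is a simple upgrade from ``a.e.\ $t$ and zero-measure symmetric difference'' to genuine equality for every $t$. Since $\Sigma(t)$ and $\Sigma^*(t)$ are smooth closed hypersurfaces, any point in $\Sigma(t)\setminus \Sigma^*(t)$ would lie, together with a relatively open neighborhood of positive $\mathcal{H}^{d-1}$-measure, in $\Sigma(t)\setminus \Sigma^*(t)$; so vanishing symmetric difference in measure forces $\Sigma(t)=\Sigma^*(t)$ as sets for every $t$ in the a.e.\ subset. Finally, smooth dependence of the parametrized classical flows on time—in particular the continuity of the map $t\mapsto \Sigma(t)$ in Hausdorff distance—extends the equality from a dense set of times to every $t\in[0,\min\{T,T^*\})$.

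The only place where anything substantive is hidden is the first paragraph: one must be slightly careful that the Brakke identity from Exercise~\ref{exercise:brakke}, which is stated for an embedding $X$ on all of $C^1$, is indeed available with nonnegative test functions $\varphi$ as required by Theorem~\ref{thm inclusion Soner}. This is immediate since the identity holds for every $C^1$ test function and nonnegative functions form a subclass. Everything else is a bookkeeping argument, so I do not anticipate any real obstacle.
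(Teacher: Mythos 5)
Your proposal is correct and follows essentially the same route as the paper: the paper's one-line proof likewise notes that, by Exercise~\ref{exercise:brakke}, a smooth classical MCF satisfies Brakke's inequality, and then applies Theorem~\ref{thm inclusion Soner} to both pairs $(\Sigma,\Sigma^*)$ and $(\Sigma^*,\Sigma)$. Your additional care in upgrading from the a.e.-in-$t$, measure-zero statement to equality of the smooth closed hypersurfaces for every $t$ (via continuity in time) fills in a detail the paper leaves implicit, but it is the same argument.
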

	
		\begin{proof} 
			By Exercise \ref{exercise:brakke}, we can apply the theorem to the pairs $(\Sigma,\Sigma^*)$ and $(\Sigma^*,\Sigma)$.			
		\end{proof}
	
		\begin{proof}[Proof of Theorem \ref{thm inclusion Soner}]
			Let $\phi := f(\frac12 \dist^2(x,\Sigma^*(t)))$ be a smoothly truncated version of (half of) the squared distance to the smoothly evolving MCF $\Sigma^*(t)$. Here $f( s) =  s$ for $ 0\leq s \leq \delta/2 $, $f( s)=\delta$ for $ s \geq \delta$ and $f'>0$ for $s<\delta$. 
			Then we define the weighted energy of the other surface $\Sigma(t)$
			\begin{align}
				\mathcal{E}(t) := \int_{\Sigma(t)} \phi \dL S.
			\end{align}
			Clearly $\mathcal{E}(t) \geq0$ with equality if and only if $\H^{d-1}(\Sigma(t)\setminus \Sigma^*(t))=0$. 
			We will show that $\ddt \mathcal{E} \leq C \mathcal{E}$, so that the claim follows from Gronwall's inequality.
			
			Plugging $\varphi=\phi$ into \eqref{eq:brakke ineq} and integrating by parts along $\Sigma(t)$ yields
			\begin{align*}
				\ddt \mathcal{E}(t)
				\leq&  \int_{\Sigma(t)} \big(  \p_t \phi - H\nu \cdot \nabla \phi -\phi H^2 \big) \dL S
				\\=& \int_{\Sigma(t)} \big(  \p_t \phi - \Delta \phi + \nu \cdot \nabla^2 \phi\, \nu-\phi H^2 \big) \dL S.
			\end{align*}
			By 	\eqref{eq:dist transport MCF squared} in Lemma~\ref{lemma:dist transport equation MCF}, we have $\p_t \phi - \Delta \phi    \leq-1  + C\phi $ in $\mathcal{U}_\delta$. 
			Furthermore, one can compute $\nabla^2 \phi \leq I_d$ in $\mathcal{U}_\delta$ in the sense of symmetric bilinear forms, i.e., $\nu \cdot \nabla^2 \phi \,\nu \leq |\nu|^2 =1$ in $\mathcal{U}_\delta$.
			Combining these two inequalities, we obtain the near-field estimate
			 $  \p_t \phi - \Delta \phi + \nu \cdot \nabla^2 \phi\, \nu \leq C\phi$ in $\mathcal{U}_\delta$. 
			 Together with the trivial far-field estimate $ \p_t \phi - \Delta \phi + \nu \cdot \nabla^2 \phi \nu \leq C \leq \frac{C}{\delta^2} \phi$ in $\mathcal{U}_\delta^c$, this yields
			\begin{align*}
				\frac{d}{dt} \mathcal{E}(t)
				\leq \int_{\Sigma(t)} \big(  C\phi -\phi H^2 \big) \dL S \leq C \mathcal{E}(t),
			\end{align*}
			which concludes the proof.
		\end{proof}
	
		\begin{remark}
			Inspecting the above proof, we see that in fact, one can lower the regularity of the second interface $\Sigma(t)$ in Theorem \ref{thm inclusion Soner}: the proof applies line by line if $\Sigma(t)$ is replaced by any Brakke flow, i.e., a one-parameter family of varifolds $\mu_t$ which satisfy \eqref{eq:brakke ineq} distributionally in $t$.  However, the interface $\Sigma(t)$ or parts of it could disappear at any time.
		\end{remark}
		
		\subsection{The weak-strong uniqueness principle}
		In order to capture the distance between the two interfaces, we introduce the functional
		\begin{align*}
			 \int_{\Sigma(t)} \left(1- \xi \cdot \nu \right) \dL \mathcal{H}^{d-1}.
		\end{align*}
		In fact, we will use it to compare a smooth classical MCF $\Sigma^\ast(t) = \partial \Omega^*(t)$ with a distributional solution $\chi$ as defined in the previous section:
		\begin{align}\label{eq:def rel entr}
				\mathcal{E}(t) = \int_{[0,1)^d} \left(1- \xi \cdot \nu \right) |\nabla \chi|,
		\end{align}
		where as before $\nu(\cdot,t)$ denotes the measure theoretic outward normal to $\{\chi(\cdot,t)=1\}$ and where $\xi$ will be a suitable extension of the exterior unit normal of $\Sigma^\ast(t)=\p\Omega^*(t)$. In fact, in the present two-phase case, we can simply take
		\begin{align}\label{eq:def xi}
			\xi (x,t):= \zeta(s(x,t))\nabla s(x,t), \quad \text{with } s(x,t) :=\sdist(x,\Sigma^*(t)),
		\end{align}
		and where $\zeta $ is a cutoff such that $\zeta(\tilde s) = 1-\tilde s^2$ for $|\tilde s| \leq \delta/2$ and $\zeta =0$ for $|\tilde s|\geq \delta$.
		We have the lower bound
		\begin{align}
			\notag 1-\xi \cdot \nu 
			&= \frac12 - \xi \cdot \nu  +\frac12|\xi|^2 + \frac12 (1-|\xi|^2)
			\\& \geq\frac12 |\nu - \xi|^2 +\frac12( 1-|\xi|). \label{eq:lower bound entropy density}
		\end{align}
		Since $ 1-|\xi|\geq0 $ everywhere and $1-|\xi| = 1-\zeta(s) = s^2$ in the region where $|s|\leq \delta$, we therefore see that $\mathcal{E}$ controls the tilt-excess and a (truncated) $L^2$-type distance
		\begin{align}\label{eq:tilt exc}
			\frac12\int_{[0,1)^d}  |\nu - \xi|^2 |\nabla \chi| & \leq  \mathcal{E}(t)
			\\ \int_{[0,1)^d} \min\{s^2,\delta^2/4\} |\nabla \chi| &\leq C \mathcal{E}(t).
			\label{eq:s squared}
		 \end{align}
		Recall that the last functional has also appeared in Soner's proof above.
		Finally, we also extend the velocity vector field $V^* \nu^*$ of $\Sigma^*$ as follows
		\begin{align}\label{eq:def B}
			B := -(\Delta s) \xi \quad \text{with $\xi$ and $s$ as in \eqref{eq:def xi}}.
		\end{align}
				
		\begin{remark}
			The vector field $\xi$ is reminiscent of the well-known concept of calibrations in the static calculus of variations, in particular in the theory of minimal surfaces.
			To express this in our language, let $\chi^*\colon D \to \{0,1\}$. Then such a classical calibration is a vector field $\xi \colon D\to \R^d$ such that 
				\begin{align}
				 &\text{$\xi = \nu^*$ on $\supp |\nabla \chi^*|$},
				 \\&\text{$|\xi |\leq 1 $ in $D$,}
				 \\&\text{$\nabla \cdot \xi =0$ in $D$.}
				\end{align}
			If such a calibration exists, then we say $\chi^*$ is calibrated by $\xi$. 
			In that case, it is easy to show that $\chi^*$ minimizes the perimeter (subject to its own boundary conditions):
			\begin{align}
				\int_D |\nabla \chi^*| \leq \int_D |\nabla \chi| \quad \text{for all $\chi$ such that $\chi = \chi^*$ on $\p D$}.
			\end{align}
			(Here, the boundary condition has to be interpreted as a trace, which for sets of finite perimeter can be equivalently written as $\{\chi=1\}^{(1/2)} \cap \p D= \{\chi^*=1\}^{(1/2)} \cap \p D$, where $E^{(s)}$ denotes the points of density $s$, see e.g.\ \cite{Maggi}.) 
			
			Indeed, by the generalized Gauss-Green formula
			\begin{align*}
				\int_D |\nabla \chi| 
				\geq \int_D\nu \cdot \xi |\nabla \chi|
				=\int_D \chi (\nabla \cdot \xi) \dL x - \int_{\p \Omega \cap \{\chi=1\}} \xi \cdot \nu_{\p\Omega} \dL \mathcal{H}^{d-1}
			\end{align*} 
			with equality for $\chi=\chi^*$. 
			Now observe that the first right-hand side integral vanishes and the second one only depends on the boundary conditions. 
			
			The time-dependent vector field $\xi$ in our context may be viewed as a gradient-flow analog of this concept. 
			The conditions here read
			\begin{align}
				 \label{eq:xi nu}&\text{$\xi = \nu^*$ on $\supp |\nabla \chi^*|$},
				 \\\label{eq:xi short}&\text{$|\xi |\leq \max\{1-s^2,0\} $ in $D\times [0,T]$},
				 \\&\label{eq:div xi}\text{$\nabla \cdot \xi =-B\cdot \xi + O(|s|)$ in $D\times[0,T]$},
				 \\&\label{eq:xi transport}\text{$\p_t\xi + (B\cdot \nabla) \xi + (\nabla B)^T \xi = O(|s|) $ in $D\times[0,T]$},
				\\&\label{eq:norm xi transport}\text{$\p_t|\xi|^2 + (B\cdot \nabla) |\xi|^2= O(s^2) $ in $D\times[0,T]$}.
			\end{align} 
			
			The last two additional transport equations are not needed (and trivially satisfied with $B=0$) in the static case. It is easy to check that in our case, all these conditions are satisfied by $\xi $ and $B$ defined by \eqref{eq:def xi} and \eqref{eq:def B}.
			
			Also in the general multiphase case, our concept in \cite{FHLS2D} is a time-dependent analog of the corresponding concept in minimal surface theory, namely paired calibrations introduced in \cite{LawMor}.
		\end{remark}
	
		The main result is the following.
		
		\begin{theorem}[Weak-strong uniqueness]
			Let $(\Sigma^*(t))_{t\in[0,T]} $ with $\Sigma^*(t) = \p \Omega^*(t)$ be a smooth MCF and let $\chi$ be a distributional solution on $(0,T)$ with optimal energy dissipation rate and with initial conditions $\chi_0 = \chi_{\Omega^*(0)}$. Then
			\begin{align}
				\chi (x,t)= \chi_{\Omega^*(t)}(x) \quad \text{a.e.\ in $[0,1)^d$ for a.e.\ $t\in (0,T)$}.
			\end{align}
		\end{theorem}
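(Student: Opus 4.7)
\emph{Relative entropy approach.} The plan is to run a Gronwall argument on the relative entropy $\mathcal{E}(t)$ from \eqref{eq:def rel entr}. The first observation is that $\mathcal{E}(0)=0$: since $\chi_0=\chi_{\Omega^*(0)}$, the measure $|\nabla\chi_0|$ concentrates on $\Sigma^*(0)$, where by construction \eqref{eq:xi nu} gives $\xi=\nu^*=\nu$, so the integrand $1-\xi\cdot\nu$ vanishes $|\nabla\chi_0|$-a.e. Once I establish $\mathcal{E}(t) \leq C\int_0^t \mathcal{E}(s)\,ds$ for a.e.\ $t\in(0,T)$, Gronwall forces $\mathcal{E}\equiv 0$. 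Then the lower bound \eqref{eq:lower bound entropy density} combined with the controls \eqref{eq:tilt exc}--\eqref{eq:s squared} implies $|\xi|=1$ and $\nu=\xi$ on $\supp|\nabla\chi(\cdot,t)|$, forcing this support into $\{|s|=0\}=\Sigma^*(t)$. Together with the $L^1$-continuity-in-time estimate $\|\chi(\cdot,t_2)-\chi(\cdot,t_1)\|_{L^1}\leq C|t_2-t_1|^{1/2}$ (from Step~1 of the closure theorem) and the matching of initial data, this forces $\chi(\cdot,t)=\chi_{\Omega^*(t)}$ for a.e.\ $t$.

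\emph{Setting up the estimate.} Integrate by parts to rewrite $\mathcal{E}(t) = \int|\nabla\chi(\cdot,t)| - \int\chi(\cdot,t)(\nabla\cdot\xi)(\cdot,t)\,dx$. For the first piece, apply the optimal energy-dissipation inequality \eqref{eq:def distr sol EDI} to obtain $\int|\nabla\chi(\cdot,t)| - \int|\nabla\chi_0| \leq -\int_0^t\int V^2|\nabla\chi|$. For the second piece, test the weak formulation \eqref{eq:def distr sol def V} of the velocity with $\zeta=\nabla\cdot\xi$; this yields an expression for $\int\chi(\cdot,t)(\nabla\cdot\xi)(\cdot,t)\,dx - \int\chi_0(\nabla\cdot\xi)(\cdot,0)\,dx$ in terms of $\int_0^t\int\chi\,\partial_t(\nabla\cdot\xi)\,dx\,ds$ and $\int_0^t\int V(\nabla\cdot\xi)|\nabla\chi|$. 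Using $\mathcal{E}(0)=0$ and integrating by parts in space in the former term, I arrive at
\begin{align*}
\mathcal{E}(t) \leq -\int_0^t\!\int V^2|\nabla\chi| - \int_0^t\!\int \nu\cdot\partial_t\xi\,|\nabla\chi| - \int_0^t\!\int V(\nabla\cdot\xi)\,|\nabla\chi|.
\end{align*}

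\emph{Closing the estimate.} The crucial step is to exploit the calibration-like properties of $\xi$ and $B$. Use \eqref{eq:div xi} to replace $\nabla\cdot\xi$ by $-B\cdot\xi$ up to an $O(|s|)$ error, and the transport equation \eqref{eq:xi transport} to replace $\partial_t\xi$ by $-(B\cdot\nabla)\xi-(\nabla B)^T\xi$ up to $O(|s|)$. Writing $\xi=\nu+(\xi-\nu)$ and testing the weak curvature equation \eqref{eq:def distr sol V=H} against $B$ to eliminate $\int V(B\cdot\nu)|\nabla\chi|$ in favor of the smooth quantity $\int(\nabla\cdot B-\nu\cdot\nabla B\,\nu)|\nabla\chi|$, the leading quadratic-in-$V$ contribution collects as $-\int V^2 + \int V(B\cdot\nu) = -\tfrac{1}{2}\int(V-B\cdot\nu)^2 - \tfrac{1}{2}\int V^2 + \tfrac{1}{2}\int(B\cdot\nu)^2$, while the surviving ``strong-solution'' terms coming from $(B\cdot\nabla)\xi$, $(\nabla B)^T\xi$, and $\nabla\cdot B-\nu\cdot\nabla B\,\nu$ must combine into an expression that vanishes to second order at $\nu=\xi$ and is therefore bounded by $C\int|\nu-\xi|^2|\nabla\chi|$. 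All $O(|s|)$ errors are absorbed by Cauchy--Schwarz and \eqref{eq:s squared}, splitting cross-terms involving $V$ into a part controlled by the dissipation $\int V^2|\nabla\chi|$ and a part controlled by $\mathcal{E}$ through \eqref{eq:tilt exc}. The main obstacle is precisely this bookkeeping: showing that once the curvature identity and the transport/divergence identities for $\xi$ are combined, only a quadratic-in-$(\nu-\xi,s)$ remainder survives, which is the whole reason the vector field $\xi$ in \eqref{eq:def xi} is engineered to satisfy \eqref{eq:xi nu}--\eqref{eq:norm xi transport}. Modulo this algebra, the resulting $\mathcal{E}(t)\leq C\int_0^t\mathcal{E}(s)\,ds$ closes the proof via Gronwall.
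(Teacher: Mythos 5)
Your Gronwall estimate for $\mathcal{E}$ is structurally the same as the paper's Proposition~\ref{prop:rel entr ineq}: rewrite $\mathcal{E}(t)$ by parts, use the optimal dissipation \eqref{eq:def distr sol EDI}, test \eqref{eq:def distr sol def V} with $\zeta=\nabla\cdot\xi$, add zero in the form of \eqref{eq:def distr sol V=H} tested with $B$, complete squares, and invoke \eqref{eq:xi nu}--\eqref{eq:norm xi transport}. Be aware, though, that the step you defer as ``modulo this algebra'' is where essentially all of the paper's work lies: to see that the surviving terms are genuinely quadratic in $(\nu-\xi,s)$ one needs, besides the completions of squares, the antisymmetry identity $\int \nu\cdot\big(\nabla\cdot(B\wedge\xi)\big)\,|\nabla\chi|=0$ with $B\wedge\xi=B\otimes\xi-\xi\otimes B$, and the fact that $B$ is parallel to $\xi$ with $|B|\leq C|\xi|$ (used twice, in particular for the term $(\nabla\cdot\xi)(\nu-\xi)\cdot B$, where the cancellation $|(\nu-\xi)\cdot B|\leq C(1-\nu\cdot\xi)+C(1-|\xi|^2)$ is what saves the day). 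Asserting that the remainder ``must'' be quadratic is not a proof of it.

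Where you genuinely diverge from the paper is the endgame. The paper does \emph{not} conclude from $\mathcal{E}\equiv 0$ alone; it introduces the bulk error $\mathcal{F}(t)=\int(\chi-\chi^*)\theta\,\mathrm{d}x$ with $\theta=f(s)$, proves a second Gronwall inequality for it (absorbing the term $\tfrac12\int|V\nu-B|^2|\nabla\chi|$ into the dissipation already controlled in the $\mathcal{E}$-estimate), and concludes from $\mathcal{F}\equiv0$ via \eqref{eq:Ebulk alt}. Your alternative---$\mathcal{E}(t)=0$ forces $\supp|\nabla\chi(\cdot,t)|\subset\Sigma^*(t)$ by \eqref{eq:lower bound entropy density}, hence $\chi(\cdot,t)$ is a.e.\ constant on each connected component of $[0,1)^d\setminus\Sigma^*(t)$, i.e.\ one of finitely many candidate configurations; then the $\tfrac12$-H\"older $L^1$-continuity in time (the analogue of \eqref{eq:closure pf compact t}) together with the weak attainment of $\chi_0$ from \eqref{eq:def distr sol def V} selects the candidate $\chi_{\Omega^*(t)}$---is sound in this two-phase setting, since the candidates are uniformly separated in $L^1$ on $[0,T]$ and a locally-constant-selection/connectedness argument applies; but as written it is only gestured at and needs to be spelled out (in particular the identification of the selection as $t\downarrow0$). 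Note what each route buys: yours avoids the extra functional and is softer, but it is purely qualitative and leans on the rigid two-phase topology (finitely many components of positive volume); the paper's $\mathcal{F}$-functional gives a quantitative stability estimate and is the version that survives in the multiphase case, which is why the paper sets it up that way.
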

		
		The theorem follows basically from the following proposition.
		 
		\begin{proposition}\label{prop:rel entr ineq}
			Let $(\Sigma^*(t))_{t\in[0,T]}$ be a smooth MCF with $\Sigma^*(t) = \p \Omega^*(t)$ and define $\xi$ as above in \eqref{eq:def xi}. Then there exists a finite constant $C=C((\Sigma^*(t))_{t\in[0,T]})<\infty$ such that for any distributional solution to MCF with optimal energy dissipation rate  $\chi \colon [0,1)^d \times (0,T)\to \{0,1\}$ and for a.e.\ $t\in (0,T)$ it holds
			\begin{align}
				\mathcal{E}(t) \leq C e^{Ct} \mathcal{E}_0,
			\end{align}
			where $\mathcal{E}(t)$ is given by \eqref{eq:def rel entr} and $\mathcal{E}_0= \int_{[0,1)^d} (1-\xi \cdot \nu_0) |\nabla \chi_0|$.
		\end{proposition}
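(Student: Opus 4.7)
The plan is to establish a Gronwall-type inequality $\mathcal{E}(t) \leq \mathcal{E}_0 + C\int_0^t \mathcal{E}(s)\,ds$ on a.e.\ $t\in(0,T)$, from which the conclusion follows at once. Since $\nabla \chi = -\nu\,|\nabla\chi|$ as vector-valued Radon measures, integration by parts against $\xi$ gives
\begin{equation*}
\mathcal{E}(t) \;=\; \int_{[0,1)^d} |\nabla \chi(\cdot,t)| \;-\; \int_{[0,1)^d} \chi(\cdot,t)\,(\nabla \cdot \xi)(\cdot,t)\,dx.
\end{equation*}
The first summand is controlled via the optimal energy-dissipation inequality \eqref{eq:def distr sol EDI}. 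For the second, I plug $\zeta := \nabla \cdot \xi$ into the defining equation \eqref{eq:def distr sol def V} for $V$ with $T'=t$, which yields an exact expression for $\int \chi(\cdot,t)(\nabla \cdot \xi)(\cdot,t)\,dx$ in terms of $\chi_0$, of $\int_0^t\!\!\int \chi\,\partial_t(\nabla\cdot\xi)\,dx\,ds$, and of $\int_0^t\!\!\int V\,(\nabla\cdot\xi)\,|\nabla\chi|$. Summing the two bounds produces
\begin{equation*}
\mathcal{E}(t)\leq \mathcal{E}_0 - \int_0^t\!\!\int V^2\,|\nabla \chi| - \int_0^t\!\!\int V(\nabla\cdot\xi)\,|\nabla \chi| - \int_0^t\!\!\int \chi\,\partial_t(\nabla \cdot \xi)\,dx\,ds.
\end{equation*}

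The decisive step is to exploit the calibration-like properties of $\xi$ and $B$. Using \eqref{eq:div xi}, i.e.\ $\nabla\cdot\xi = -B\cdot\xi + O(|s|)$, and splitting $\xi = \nu + (\xi-\nu)$, one rewrites
\begin{equation*}
-V^2 - V(\nabla\cdot\xi)  = -V^2 + V(B\cdot\nu) + V\,B\cdot(\xi-\nu) + O(|V s|),
\end{equation*}
and then completes the square: $-V^2 + V(B\cdot\nu) = -\tfrac12(V-B\cdot\nu)^2 - \tfrac12 V^2 + \tfrac12(B\cdot\nu)^2$. The first piece is dropped. The leftover $\tfrac12(B\cdot\nu)^2$ is handled through the mean-curvature equation \eqref{eq:def distr sol V=H} applied to the test vector field $B$, which gives the identity $\int V\,\nu\cdot B\,|\nabla\chi| = -\int(\nabla\cdot B-\nu\cdot\nabla B\,\nu)|\nabla\chi|$ and allows one to absorb $\tfrac12(B\cdot\nu)^2$ into geometric quantities whose difference from the analogous quantities on $\Sigma^*$ is of tilt-excess type. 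Finally, $-\int\chi\,\partial_t(\nabla\cdot\xi)\,dx$ is handled by taking the divergence of the transport equation \eqref{eq:xi transport} (supplemented by \eqref{eq:norm xi transport}), which yields an inhomogeneous transport PDE for $\nabla\cdot\xi$ along $B$ with right-hand side of size $O(1)$ overall and $O(|s|)$ inside $\mathcal{U}_\delta$.

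All remaining error integrands are linear combinations of $|\xi-\nu|^2$, $\min\{s^2,\delta^2/4\}$, and bilinear expressions reducible to these by Cauchy--Schwarz (absorbing factors of $V$ into the available $\int V^2|\nabla\chi|$). By the coercivity bounds \eqref{eq:tilt exc}--\eqref{eq:s squared}, each of these is pointwise controlled by the relative-entropy density $1-\xi\cdot\nu$. This gives $\mathcal{E}(t) \leq \mathcal{E}_0 + C\int_0^t \mathcal{E}(s)\,ds$, and Gronwall closes the argument.

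The principal obstacle is the absence of a classical time derivative for $\chi$: every formal time differentiation above has to be executed through the distributional identities \eqref{eq:def distr sol def V} and \eqref{eq:def distr sol V=H}. The test object $\zeta = \nabla\cdot\xi$ is admissible in \eqref{eq:def distr sol def V} precisely because of the truncation $\zeta(\tilde s)$ built into the definition \eqref{eq:def xi}, which keeps $\xi$ (hence $\nabla\cdot\xi$) smooth and compactly supported in $\mathcal{U}_\delta$; similarly $B$ from \eqref{eq:def B} is a legitimate test vector field in \eqref{eq:def distr sol V=H}. A second delicate point is the handling of the ``far-field'' region, where $\xi$ and $B$ vanish: there the cross-term $-\int\chi\,\partial_t(\nabla\cdot\xi)\,dx$ and the $|V s|$ error terms must each be shown to generate only contributions of order $\mathcal{E}$, which they do because $|\xi|$, $|B|$, and their derivatives are uniformly bounded while the $s^2$-weight forces the natural coercivity on the interface.
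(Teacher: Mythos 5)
Your skeleton coincides with the paper's: rewriting $\mathcal{E}(t)=\int|\nabla\chi(\cdot,t)|-\int\chi(\cdot,t)(\nabla\cdot\xi)\,\dL x$, testing \eqref{eq:def distr sol def V} with $\zeta=\nabla\cdot\xi$, invoking the optimal dissipation inequality \eqref{eq:def distr sol EDI}, bringing in \eqref{eq:def distr sol V=H} with the test field $B$ from \eqref{eq:def B}, completing squares, and closing with the coercivity \eqref{eq:tilt exc}--\eqref{eq:s squared} and Gronwall. Up to the point where you obtain $\mathcal{E}(t)\leq\mathcal{E}_0-\int V^2|\nabla\chi|-\int V(\nabla\cdot\xi)|\nabla\chi|-\int\chi\,\p_t(\nabla\cdot\xi)\,\dL x\dL t'$, your argument is the paper's argument.

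The gap is in the decisive step. After substituting \eqref{eq:div xi} and completing the square as $-V^2+V(B\cdot\nu)=-\tfrac12(V-B\cdot\nu)^2-\tfrac12V^2+\tfrac12(B\cdot\nu)^2$, you are left with the positive term $\tfrac12(B\cdot\nu)^2$, which on $\supp|\nabla\chi|$ is of order one (it is essentially $(\Delta s)^2\approx (H^*)^2$), so it is \emph{not} controlled by $1-\xi\cdot\nu$, $|\nu-\xi|^2$ or $s^2$. Your proposed fix---absorbing it via \eqref{eq:def distr sol V=H} tested with $B$---does not work as stated: that identity is linear in $V$ and merely exchanges $\int V\nu\cdot B\,|\nabla\chi|$ for $-\int(\nabla\cdot B-\nu\cdot\nabla B\,\nu)|\nabla\chi|$; it offers no mechanism to cancel a sign-definite quadratic term of order one. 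In the paper the analogous leftovers $\tfrac12(\nabla\cdot\xi)^2+\tfrac12|B|^2$ disappear only through an exact cancellation chain: a second square completion using $B\parallel\xi$ giving $\tfrac12(\nabla\cdot\xi+\xi\cdot B)^2+\tfrac12(1-|\xi|^2)|B|^2-(\nabla\cdot\xi)\,\xi\cdot B$, elimination of the cross term via the antisymmetry identity $\int\nu\cdot\bigl(\nabla\cdot(B\wedge\xi)\bigr)|\nabla\chi|=0$ with $B\wedge\xi=B\otimes\xi-\xi\otimes B$, and smuggling the transport identities \eqref{eq:xi transport} and \eqref{eq:norm xi transport} against $\nu-\xi$, plus the extra cancellation $|(\nu-\xi)\cdot B|\leq C(1-\nu\cdot\xi)+C(1-|\xi|^2)$ coming from $B\parallel\xi$; only after all of this is every integrand bounded by $C(1-\nu\cdot\xi)+Cs^2+C|\nu-\xi|^2$. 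The same criticism applies to your handling of $-\int\chi\,\p_t(\nabla\cdot\xi)\,\dL x$: an inhomogeneous transport equation with right-hand side ``$O(1)$ overall'' is not good enough, since an order-one error on the interface measure is not dominated by $\mathcal{E}$; the paper converts this term by Gauss--Green into $-\int\p_t\xi\cdot\nu\,|\nabla\chi|$ and cancels it against the curvature terms using \eqref{eq:xi transport}, again only up to quadratic errors. In short, the strategy is the right one, but the heart of the proof---the exact algebraic cancellation of the order-one terms---is asserted rather than carried out, and the particular square completion you chose produces a term that the available cancellation structure does not reach.
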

		
		Just as in Soner's approach, we want to show $\ddt \mathcal{E}(t) \leq C \mathcal{E}(t)$. 
		Loosely speaking, this amounts to showing that all errors are quadratic in the distance $|s|$ to the classical MCF. Therefore it is not surprising that we will at some point in the proof start to complete squares.
		
		To simplify the proof, in addition to the assumptions \eqref{eq:xi nu}--\eqref{eq:norm xi transport}, we will use that $B$ is parallel to $\xi$ and that $|B | \leq C |\xi|$ by construction, cf.\ \eqref{eq:def B}. 
		This assumption is somewhat unnatural and in fact cannot be true in the multiphase, because it does not allow triple junctions to move. However, in our particular case it makes life easier because it allows us to ignore some additional terms.
		
		\begin{proof}
			Rewriting $\mathcal{E}(t)$, testing \eqref{eq:def distr sol def V} with $\zeta = \nabla \cdot \xi$ and  using \eqref{eq:def distr sol EDI}, we obtain for a.e.\ $t\in (0,T)$
			\begin{align*}
				\mathcal{E}(t)
				&= \int_{[0,1)^d} |\nabla \chi(\cdot,t)| - \int_{[0,1)^d} \chi (\cdot,t)\left(\nabla \cdot \xi(\cdot,t)\right) \dL x
				\\&\leq \int_{[0,1)^d} |\nabla \chi_0| -\int_{[0,1)^d} \chi_0 \left(\nabla \cdot \xi(\cdot,0)\right)\dL x 
				-\int_{[0,1)^d\times (0,t)} V^2 |\nabla \chi|
				\\&\quad  - \int_{[0,1)^d\times(0,t)} \chi \p_t \left(\nabla \cdot  \xi\right) \dL x \dL t'
				-\int_{[0,1)^d\times(0,t)}\left( \nabla \cdot \xi \right)V |\nabla \chi|
				\\& = \mathcal{E}_0
				+\int_{[0,1)^d\times (0,t)} \left( -V^2 -V(\nabla \cdot \xi)- \p_t \xi \cdot \nu \right) |\nabla \chi|.				
			\end{align*}
			In the following, let's omit the domain of integration $[0,1)^d\times(0,t)$ for ease of notation. 	 
			
			Now we add zero in form of the PDE \eqref{eq:def distr sol V=H}, tested with the extended velocity field $B$ defined in \eqref{eq:def B}
			\begin{align*}
				\mathcal{E}(t)
				\leq \mathcal{E}_0
				+\int\big(&-V^2 -V(\nabla \cdot \xi) +  V\nu\cdot B 
				\\&+\nabla \cdot B - \nu \cdot \nabla B \, \nu- \p_t \xi \cdot \nu \big) |\nabla \chi|.			
			\end{align*}
			Completing the squares and smuggling in the term $\nu\cdot (B\cdot \nabla) \xi + \xi \cdot( \nu \cdot \nabla )B$ to make the transport equation for $\xi$ appear yields
			\begin{align*}
				\mathcal{E}(t)
				\leq &\mathcal{E}_0
				-\frac12 \int \left(  (V+\nabla \cdot \xi)^2 +| V\nu - B|^2\right) |\nabla \chi|
				\\&+ \int \frac12\big((\nabla \cdot \xi)^2 +|B|^2\big) |\nabla \chi| 
				\\&+ \int \Big(\nabla \cdot B - \nu \cdot \nabla B \, \nu+\nu\cdot (B\cdot \nabla) \xi + \xi \cdot( \nu \cdot \nabla )B\Big) |\nabla \chi| 
				\\& -\int \left( \p_t \xi + (B\cdot \nabla ) \xi + (\nabla B)^T \xi \right)\cdot \nu  |\nabla \chi|.		
			\end{align*}
			Now we use that $B$ is parallel to $\xi$ in the second line and complete another square, and smuggle $\xi$ into the last line 
			\begin{align*}
				\mathcal{E}(t)
				\leq &\mathcal{E}_0
				-\frac12 \int \left(  (V+\nabla \cdot \xi)^2 +| V\nu - B|^2\right) |\nabla \chi|
				\\&+\int \left(\frac12(\nabla \cdot \xi+\xi \cdot B)^2  + \frac12 \left( 1- |\xi|^2\right)|B|^2- (\nabla \cdot \xi) \xi \cdot B\right)|\nabla \chi|
				\\&+\int \left(\nabla \cdot B - \nu \cdot \nabla B \, \nu+\nu\cdot (B\cdot \nabla) \xi + \xi \cdot( \nu \cdot \nabla )B - \xi \cdot \nabla B \,\xi\right) |\nabla \chi| 
				\\&-\frac12\int \left( \p_t |\xi|^2 + (B\cdot \nabla ) |\xi|^2\right)  |\nabla \chi|	
				\\& -\int \left( \p_t \xi + (B\cdot \nabla ) \xi + (\nabla B)^T \xi \right)\cdot (\nu-\xi)  |\nabla \chi|.	
			\end{align*}
			Now we almost recognize the quadratic expression $-(\nu-\xi) \cdot \nabla B (\nu-\xi)$ on the right-hand side:
			\begin{align*}
				\mathcal{E}(t)
				\leq &\mathcal{E}_0
				-\frac12 \int \left(  (V+\nabla \cdot \xi)^2 +| V\nu - B|^2\right) |\nabla \chi|
				\\&+\int \left(\frac12(\nabla \cdot \xi+\xi \cdot B)^2  + \frac12 \left( 1- |\xi|^2\right)|B|^2\right)|\nabla \chi|
				\\&+\int \big(- (\nabla \cdot \xi) \xi \cdot B+\nabla \cdot B +\nu\cdot (B\cdot \nabla) \xi - \nu \cdot( \xi \cdot \nabla )B  \big)|\nabla \chi| 
				\\&-\int (\nu-\xi) \cdot \nabla B (\nu-\xi) |\nabla \chi| 
				\\&-\frac12\int \left( \p_t |\xi|^2 + (B\cdot \nabla ) |\xi|^2\right)  |\nabla \chi|
				\\& -\int \left( \p_t \xi + (B\cdot \nabla ) \xi + (\nabla B)^T \xi \right)\cdot (\nu-\xi)  |\nabla \chi|.	
			\end{align*}
			Now all terms except for the third line are in the desired form. 
			To control that integral, we set $B \wedge \xi := B\otimes \xi - \xi \otimes B$ and exploit the symmetry
			\begin{align*}
				0 
				&= \int \chi \nabla \cdot \big(\nabla \cdot \left( B\wedge \xi \right) \big)\dL x\dL t'
				= \int  \nu \cdot \big(\nabla \cdot \left( B\wedge \xi \right) \big) |\nabla \chi|
				\\&= \int \big(( \nabla \cdot \xi )\nu \cdot B + \nu \cdot (\xi \cdot \nabla) B -( \nabla \cdot B )\nu \cdot \xi - \nu \cdot (B \cdot \nabla) \xi \big) |\nabla \chi|.
			\end{align*}
			Adding this to the right-hand side and using once more that $B$ is parallel to $\xi$ we get
				\begin{align*}
				\mathcal{E}(t)
				\leq &\mathcal{E}_0
				-\frac12 \int \left(  (V+\nabla \cdot \xi)^2 +| V\nu - B|^2\right) |\nabla \chi|
				\\&+\int \left(\frac12(\nabla \cdot \xi+\xi \cdot B)^2  + \frac12 \left( 1- |\xi|^2\right)|B|^2\right)|\nabla \chi|
				\\&+\int \big((\nabla \cdot \xi) (\nu-\xi ) \cdot B + (\nabla \cdot B) (1-\nu \cdot \xi)  \big)|\nabla \chi| 
				\\&-\int (\nu-\xi) \cdot \nabla B (\nu-\xi) |\nabla \chi| 
				\\&-\frac12\int \left( \p_t |\xi|^2 + (B\cdot \nabla ) |\xi|^2\right)  |\nabla \chi|	
				\\& -\int \left( \p_t \xi + (B\cdot \nabla ) \xi + (\nabla B)^T \xi \right)\cdot (\nu-\xi)  |\nabla \chi|.	
			\end{align*}
			Now all terms on the right-hand side are of the desired order: either we can directly estimate against $C(1-\nu \cdot \xi)$, $Cs^2$, or $C |\nu-\xi|^2$, where $C$ depends on $\xi$ and $B$. The errors in the latter two cases can be post-processed by our estimates \eqref{eq:tilt exc} and \eqref{eq:s squared}.
			This is clear for most terms using conditions \eqref{eq:xi nu}--\eqref{eq:norm xi transport} and \eqref{eq:lower bound entropy density}.
			The only term we need to check more carefully is $(\nabla \cdot \xi) (\nu-\xi ) \cdot B $. Here appears an extra cancellation because $B $ is parallel to $\xi$, namely, $|(\nu-\xi ) \cdot B | \leq \frac{|B|}{|\xi|} |\nu \cdot \xi - |\xi|^2| \leq C (1-\nu \cdot \xi )+C( 1 - |\xi|^2) $.
		\end{proof}
		
		To actually improve Soner's inclusion principle to a uniqueness principle, one needs to add a lower-order term to the functional which takes a similar form as the semi-distance $\int s \left(\chi-\chi_{\Omega^*}\right)  \dL x =\int |s| \left|\chi-\chi_{\Omega^*}\right| \dL x = \int_{\Omega \Delta \Omega^*} |s|  \dL x $ in \cite{ATW93, LucStu95}.
			
		For completeness, let us show how to control this semi-distance
		\begin{align}
			\mathcal{F}(t) := \int_{[0,1)^d} \left(\chi - \chi^*\right) \theta\dL x,
		\end{align}
		where as before $\chi$ is our distributional solution and where $\chi^*(x,t)=\chi_{\Omega^*(t)}(x)$ encodes the strong solution and the weight $\theta$ is given by
		\begin{align}
			\theta(x,t) := f(s(x,t)), \quad \text{with }s(x,t)=\sdist(x,\p \Omega^*(t))
		\end{align}
		and $f$ is again a smooth truncation of the identity. More precisely, $f(-\tilde s) = - f(\tilde s)$ and $f( s) =  s$ for $ 0\leq s \leq \delta/2$, $f(\tilde  s)=\delta$ for $ \tilde s \geq \delta$ and $f'\geq0$ in $\R$. 
		
		Then of course
		\begin{align}\label{eq:Ebulk alt}
				\mathcal{F}(t) =  \int_{[0,1)^d} \left|\chi - \chi^*\right| |\theta| \dL x
		\end{align}
		and hence 
		\begin{align}
			\mathcal{F}(t)=0 \text{ if and only if } \chi(\cdot,t)=\chi^*(\cdot,t)\text{ a.e.\ in }[0,1)^d.
		\end{align}
		Note also that 
		\begin{align}\label{eq:theta transport}
			|\p_t \theta + (B\cdot \nabla) \theta| \leq C |\theta|.
		\end{align}
		
		\begin{lemma}
			Under the same conditions as in Proposition \ref{prop:rel entr ineq}, we have
			\begin{align}
				\mathcal{F}(t) \leq C e^{Ct} (\mathcal{F}_0 + \mathcal{E}_0),
			\end{align}
			where similarly to $\mathcal{E}_0$, $\mathcal{F}_0:= \int_{[0,1)^d} \left(\chi_0(x) - \chi^*(x,0)\right) \theta(x,0)\dL x$. 
		\end{lemma}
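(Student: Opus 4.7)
The plan is to differentiate $\mathcal{F}(t)$ in time, exploit that the weight $\theta$ vanishes on the strong interface $\Sigma^*(t)$, use the transport estimate \eqref{eq:theta transport} to swap $\partial_t \theta$ for $-(B\cdot\nabla)\theta$ modulo $O(|\theta|)$, integrate by parts, and then close a Gronwall inequality against $\mathcal{F}$, picking up an $\mathcal{E}$-contribution from the interaction of the velocity error with the weight.

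Concretely, using \eqref{eq:def distr sol def V} with test function $\zeta=\theta$ for the distributional solution and the analogous smooth identity for $\chi^*$, and noting that $\theta\equiv 0$ on $\Sigma^*$ kills the velocity contribution from the strong solution, we obtain
\begin{align*}
\mathcal{F}(T')-\mathcal{F}_0 = \int_0^{T'}\!\!\int_{[0,1)^d}(\chi-\chi^*)\,\partial_t\theta \dL x\dL t + \int_0^{T'}\!\!\int V\theta\, |\nabla\chi|.
\end{align*}
Next I use \eqref{eq:theta transport} to replace $\partial_t\theta$ by $-(B\cdot\nabla)\theta + O(|\theta|)$ and integrate by parts against $\chi-\chi^*$. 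The divergence-of-$B\theta$ piece produces a boundary term $-\int B\theta\cdot\nu\,|\nabla\chi|$ (the analogous $\chi^*$-term vanishes because $\theta=0$ on $\Sigma^*$), while the $\theta\,(\nabla\cdot B)$ piece is $O(\mathcal{F})$ in view of \eqref{eq:Ebulk alt}. Collecting everything,
\begin{align*}
\mathcal{F}(T')-\mathcal{F}_0 \leq C\int_0^{T'}\mathcal{F}(t)\dL t + \int_0^{T'}\!\!\int (V-B\cdot\nu)\,\theta\,|\nabla\chi|.
\end{align*}

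The main work is controlling the cross term, which is a product of two errors. I estimate it by Cauchy--Schwarz in space-time, using the pointwise bound $|V-B\cdot\nu|\leq |V\nu-B|$ and the design of $\theta$ which yields $\theta^2\leq C\min\{s^2,\delta^2\}$:
\begin{align*}
\left|\int_0^{T'}\!\!\int (V-B\cdot\nu)\theta|\nabla\chi|\right|
\leq \bigl(\textstyle\int_0^{T'}\!\!\int|V\nu-B|^2|\nabla\chi|\bigr)^{1/2}\bigl(\textstyle C\int_0^{T'}\mathcal{E}(t)\dL t\bigr)^{1/2}.
\end{align*}
Both factors on the right are controlled by $\mathcal{E}_0$ up to $Ce^{CT'}$: the second directly by Proposition~\ref{prop:rel entr ineq}, and the first by inspecting that proof, where the squared-velocity-error term $\tfrac12\int|V\nu-B|^2|\nabla\chi|$ was discarded as a favorable sign on the left-hand side and can therefore be rearranged into the Gronwall estimate to yield $\int_0^{T'}\!\int |V\nu-B|^2|\nabla\chi|\leq Ce^{CT'}\mathcal{E}_0$. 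Hence the cross term is bounded by $Ce^{CT'}\mathcal{E}_0$.

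Putting everything together gives the integral inequality
\begin{align*}
\mathcal{F}(T')\leq \mathcal{F}_0+Ce^{CT'}\mathcal{E}_0+C\int_0^{T'}\mathcal{F}(t)\dL t,
\end{align*}
and Gronwall closes out the estimate $\mathcal{F}(T')\leq Ce^{CT'}(\mathcal{F}_0+\mathcal{E}_0)$. The main obstacle, as usual for such weak–strong arguments, lies in the cross term: the velocity-error factor $|V\nu-B|$ is only integrable in time (through dissipation), not pointwise in $t$, so one must absorb it via Cauchy--Schwarz against the weight $\theta$ (whose square is controlled pointwise in time by $\mathcal{E}$ through \eqref{eq:s squared}); a purely pointwise estimate of $\dot{\mathcal{F}}$ by $C(\mathcal{F}+\mathcal{E})$ is not available without this observation.
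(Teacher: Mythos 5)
Your proposal is correct and follows essentially the same route as the paper: test the velocity identity with $\theta$, use the transport estimate for $\theta$ and Gauss--Green (with $\theta=0$ on $\Sigma^*$ killing the strong-solution contributions) to reduce to the cross term $\int\theta\,(V-B\cdot\nu)\,|\nabla\chi|$, and control it via $\theta^2\leq C\min\{s^2,\delta^2\}$ together with the dissipation term $\int|V\nu-B|^2|\nabla\chi|$ that was dropped in the proof of Proposition \ref{prop:rel entr ineq}, closing with Gronwall. The only cosmetic difference is that you estimate the cross term by Cauchy--Schwarz in space-time, whereas the paper uses Young's inequality and absorbs the resulting $\tfrac12\int|V\nu-B|^2|\nabla\chi|$ into that same dissipation; both rest on the identical observation.
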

		\begin{proof}
			Using \eqref{eq:def distr sol def V}, $\theta(\cdot,t')=0$ on $\supp |\nabla \chi^*(\cdot,t')| = \Sigma^*(t')$, \eqref{eq:theta transport}, \eqref{eq:Ebulk alt}, and Gauss-Green, we obtain (again omitting the domain of integration $[0,1)^d\times(0,t)$)
			\begin{align*}
				\mathcal{F}(t) 
				&= \mathcal{F}(0) + \int \theta V |\nabla \chi|  + \int(\chi-\chi^*) \p_t \theta \dL x \dL t' 
				\\& \leq  \mathcal{F}(0)
				 + \int \theta V |\nabla \chi|  
				 \\&\quad - \int (\chi-\chi^*) (B\cdot \nabla) \theta \dL x \dL t' 
				 + C \int |\chi-\chi^*| | \theta| \dL x \dL t' 
				 \\&= \mathcal{F}(0) + C\int_0^t \mathcal{F}(t')\dL t'
				 + \int\theta V |\nabla \chi|  
				 - \int \theta (B\cdot \nu)  |\nabla \chi| 
				 \\&\leq \mathcal{F}(0) + C\int_0^t \mathcal{F}(t')\dL t'
				 + \frac12 \int \theta^2|\nabla \chi| + \frac12 \int |V\nu -B|^2 |\nabla \chi|  ,
			\end{align*}
			where we have used Young's inequality in the last step.
			Crucially, we can absorb the last right-hand side term by the dissipation of $\mathcal{E}(t)$, i.e., the first line on the right-hand side. Also note that $\theta^2 \leq C \min\{s^2,\delta^2\}$ so that we can estimate the second-to-last term by $C\int_0^t\mathcal{E}(t')\dL t'$.
		\end{proof}

	\appendix
	
	\section{A naive implementation of the thresholding scheme}\label{sec:appendix}
	The following code is an even more direct implementation of thresholding than the one described in \S\S \ref{sec:numerical schemes}, which uses FFT. Instead, here the convolution is carried out explicitly, which is less efficient but, as I believe, more transparent. 
	I would recommend the interested reader to experiment with varying initial conditions. 
	
	To run the program from your terminal, save this text to a file, say simplemcf.py, navigate to the folder containing it and enter
	\begin{align*}
	\texttt{python simplemcf.py (grid resolution) (time steps)}
	\end{align*}
	in the command line.
	Here \texttt{grid resolution} should be a power of 2 (e.g. 32),
	\texttt{time steps} should be a positive integer (e.g. 20), for example
	\begin{align*}
	\texttt{python simplemcf.py 32 20}
	\end{align*}
	
	The complete code reads as follows. Note that most of the lines are preparatory steps and the actual algorithm only consists of the convolution and thresholding inside the $t$-loop.
	
	\begin{minted}[obeytabs=true,tabsize=2]{python}
	
import numpy as np
import matplotlib.pyplot as plt
import sys
	
#############################
#   parameters
#############################
	
if len(sys.argv) < 2:
print('''Not enough input arguments.''')
exit(0)
	
n = int(sys.argv[1]) # size of spatial grid
T = int(sys.argv[2]) # number of time steps
width = 4 # width of the kernel (relative to grid size);
	
#############################
#   initial conditions
#############################
	
# random initial conditions
u = np.sign(2*np.random.rand(n,n)-1)

# initializing auxiliary variable
convolution = 0*u

# definition of kernel
# trivial kernel; you can also try out different ones
kernel = np.ones((2*width+1,2*width+1)) 

#############################
#   thresholding scheme
#############################

for t in range(T): #iteration over time steps
	
 # convolution step
 convolution = 0*u
 for x in range(n): #iteration over grid points
  for y in range(n): #iteration over grid points
   for dx in range(-width,width+1): #iteration over nbhd
    for dy in range(-width,width+1): #iteration over nbhd
     if dx**2 + dy**2 < width**2:
      convolution[x,y] += kernel[dx,dy]*u[(x+dx)%n,(y+dy)%n]
	
 # thresholding step
 u = np.sign(convolution)

 # plot current configuration
 plt.imshow(u,interpolation='nearest')
 plt.pause(0.001)
 plt.show()
	\end{minted}
	
	Here are two other examples of initial conditions, which can be used instead of the random one used above:
	
	\begin{minted}[obeytabs=true,tabsize=3]{python}
# square
u = np.ones((n,n))
for x in range(int(round(n/4)), int(round(3*n/4))):
 for y in range(int(round(n/4)), int(round(3*n/4))):
  u[x,y] = -1
	
# ball
u = np.ones((n,n))
for x in range(n):
 for y in range(n):
  if (x-n/2)**2 + (y-n/2)**2 < (n/3)**2:
   u[x,y] = -1
	\end{minted}
	
	\subsection*{Acknowledgements}
	This manuscript is based on (parts of) a one-se\-mes\-ter graduate course held by the author at the University of Bonn in winter 2020/2021 and a short course he held at RWTH Aachen University in spring 2021. 
	Funding by the Deutsche Forschungsgemeinschaft (DFG, German Research Foundation) under Germany's Excellence Strategy -- EXC-2047/1 -- 390685813 is gratefully acknowledged. 
	
	\frenchspacing	
	\bibliographystyle{abbrv}
	\bibliography{references}

\end{document}